\newtheorem{thm}{Theorem}[section]
\newtheorem{cor}[thm]{Corollary}
\newtheorem{lem}[thm]{Lemma}
\theoremstyle{definition}
\newtheorem{defn}{Definition}[section]
\newtheorem{rem}{Remark}[section]
\numberwithin{equation}{section}
\newcommand{\be}{\begin{equation}}
\newcommand{\ee}{\end{equation}}
\begin{document}
\title[CNS system with slow $p$-Laplacian diffusion]
{Global weak solutions for the three-dimensional chemotaxis-Navier-Stokes system with slow $p$-Laplacian diffusion}%
\author[Tao]{Weirun Tao}%
\address{Department of Mathematics, Southeast University, Nanjing 210096, P. R. China}
\email{taoweiruncn@163.com}
\author[Li]{Yuxiang Li}%
\address{Department of Mathematics, Southeast University, Nanjing 210096, P. R. China}
\email{lieyx@seu.edu.cn}
\thanks{Supported in part by National Natural Science Foundation of China (No. 11171063, No. 11671079, No. 11701290), National Natural Science Foundation of China under Grant (No. 11601127), and National Natural Science Foundation of Jiangsu Provience (No. BK20170896).}

\subjclass[2010]{35K92, 35Q35, 35Q92, 92C17.}%
\keywords{chemotaxis, Navier-Stokes equation, nonlinear diffusion, global existence.}

\begin{abstract}
 This paper investigates an incompressible chemotaxis-Navier-Stokes system with slow $p$-Laplacian diffusion
 \begin{eqnarray}\nonumber
  \left\{\begin{array}{lll}
     \medskip
     n_t+u\cdot\nabla n=\nabla\cdot(|\nabla n|^{p-2}\nabla n)-\nabla\cdot(n\chi(c)\nabla c),&{} x\in\Omega,\ t>0,\\
     \medskip
     c_t+u\cdot\nabla c=\Delta c-nf(c),&{} x\in\Omega,\ t>0,\\
     \medskip
     u_t+\kappa(u\cdot\nabla) u=\Delta u+\nabla P+n\nabla\Phi,&{} x\in\Omega,\ t>0,\\
     \medskip
     \nabla\cdot u=0,&{} x\in\Omega,\ t>0
  \end{array}\right.
\end{eqnarray}
under homogeneous boundary conditions of Neumann type for $n$ and $c$, and of Dirichlet type for $u$ in a bounded convex domain $\Omega\subset \mathbb{R}^3$ with smooth boundary. Here, $\Phi\in W^{1,\infty}(\Omega)$, $0<\chi\in C^2([0,\infty))$ and $0\leq f\in C^1([0,\infty))$ with $f(0)=0$. It is proved that if $p>\frac{32}{15}$ and under appropriate structural assumptions on $f$ and $\chi$, for all sufficiently smooth initial data $(n_0,c_0,u_0)$ the model possesses at least one global weak solution.
\end{abstract}
\maketitle
% ----------------------------------------------------------------
\section{Introduction}
In this paper, we consider the following chemotaxis-Navier-Stokes system with $p$-Laplacian diffusion
 \begin{eqnarray}\label{CNS}
  \left\{\begin{array}{lll}
     \medskip
     n_t+u\cdot\nabla n=\nabla\cdot\left(|\nabla n|^{p-2}\nabla n\right)-\nabla\cdot(n\chi(c)\nabla c),&{} x\in\Omega,\ t>0,\\
     \medskip
     c_t+u\cdot\nabla c=\Delta c-nf(c),&{}x\in\Omega,\ t>0,\\
     \medskip
     u_t+\kappa(u\cdot\nabla)u=\Delta u+\nabla P+n\nabla\Phi ,&{}x\in\Omega,\ t>0,\\
     \medskip
     \nabla\cdot u=0, &{}x\in\Omega,\ t>0
  \end{array}\right.
\end{eqnarray}
in a smooth bounded domain $\Omega\subset\mathbb{R}^3$, where the scalar functions $n =n(x, t)$ and $c=c(x, t)$ denote bacterial density and the concentration of oxygen, respectively. The vector $u =(u_1, u_2, u_3)$ is the fluid velocity field and the associated pressure is denoted by $P=P(x, t)$. The function $\chi$ represents the chemotactic sensitivity, $f$ is the oxygen consumption rate by the bacteria and $\kappa\in \mathbb{R}$ measures the strength of nonlinear fluid convection. The given function $\Phi$ stands for the gravitational potential produced by the action of physical forces on the cell. If $p>2$, the nonlinear diffusion $|\nabla n|^{p-2}\nabla n$ is called the slow $p$-Laplacian diffusion, whereas $1 < p < 2$, it is called the fast $p$-Laplacian diffusion.
\vskip 3mm
The Keller-Segel model was first presented in \cite{KS1970} to describe the chemotaxis of cellular slime molds. Let $n$ denote the cell density and $c$ describe the concentration of the chemical signal secreted by cells. The mathematical model reads as
 \begin{eqnarray}
  \left\{\begin{array}{lll}
     \medskip
     n_t=\Delta n-\nabla\cdot(n\nabla c),&{} x\in\Omega,\ t>0,\\
     \medskip
     c_t=\Delta c-c+n,&{}x\in\Omega,\ t>0,
  \end{array}\right.
\end{eqnarray}
which is called Keller-Segel system. It is known that whether the classical solutions of the system exist globally depends on the size of initial data (cf.\cite{Cao-DCDS-2015,Winkler-JDE-2010,Nagai&Senba&Yoshida-FE-1997,Horstmann&Wang-JAM-2001}). A large number of variants of the classical form have been investigated, including the system with the logistic terms (see \cite{Tao&Winkler-SIAMJMA-2011,Zhang&Li-ZAMP-2015-B,LiYan&Lankeit-N-2016}, for instance), two-species chemotaxis system (see \cite{Lou&Tao&Winkler-SIAMJMA-2014,Zhang&Li-JMAA-2014,LiYan&Li-NA-2014,LiYan-JMAA-2015}, for instance), attraction-repulsion chemotaxis system (see\cite{Tao&Wang-M3AS-2013,LiYan&Li-NARWA-2016}, for instance) and so on. We refer to \cite{Bellomo&Bellouquid&Tao&Winkler-M3AS-2015,Hillen-JMB-2009,Horstmann-JDMV-2003,Horstmann-JDMV-2004} for the further reading.

The chemotaxis-Navier-Stokes system was first introduced in \cite{Tuval2005}. Aerobic bacteria such as \emph{Bacillus subtilis} often live in thin fluid layers near solid-air-water contact line, in which the biology of chemotaxis, metabolism, and
cell-cell signaling is intimately connected to the physics of buoyancy,
diffusion, and mixing\cite{Tuval2005}. Both bacteria and oxygen diffuse through the fluid, and they are also transported by the fluid (cf. \cite{DOMBROWSKI-PRL-2004} and \cite{Lorz-M3AS-2010}). Taking into account all these processes, in \cite{Tuval2005} the authors proposed the model
\begin{eqnarray}\label{model-Tuval}
  \left\{\begin{array}{lll}
     \medskip
     n_t+u\cdot\nabla n=\Delta n-\nabla\cdot(n\chi(c)\nabla c),&{} x\in\Omega,\ t>0,\\
     \medskip
     c_t+u\cdot\nabla c=\Delta c-nf(c),&{} x\in\Omega,\ t>0,\\
     \medskip
     u_t+\kappa(u\cdot\nabla) u=\Delta u+\nabla P+n\nabla\Phi,&{} x\in\Omega,\ t>0,\\
     \medskip
     \nabla\cdot u=0,&{} x\in\Omega,\ t>0
  \end{array}\right.
\end{eqnarray}
in a domain $\Omega\subset \mathbb{R}^d$, where the vector $u=(u_1(x,t), u_2(x,t), \cdots, u_d(x,t))$ is the fluid velocity field and the associated pressure is represented by $P=P(x,t)$.

The chemotaxis fluid system has been studied in the last few years. In \cite{Lorz-M3AS-2010}, local-in-time weak solutions were constructed for a boundary-value problem of (\ref{model-Tuval}) in the three-dimensional setting. In \cite{Duan&Lorz&Markowich-CPDE-2010}, global classical solutions near constant states were constructed with $\Omega=\mathbb{R}^3$. For the chemotaxis-Navier-Stokes system in two space dimensions, the authors in \cite{Liu&Lorz-AIHP-2011} obtained global existence for large data. For the case of bounded domain $\Omega\subset\mathbb{R}^d$, Winkler \cite{Winkler-CPDE-2012} proved that the initial-boundary value problem of (\ref{model-Tuval}) possesses a unique global classical solution for $d=2$ and possesses at least one global weak solution for $d=3$ under the assumption that $\kappa=0$. In \cite{Winkler-ARMA-2014} the same author showed that in bounded convex domains $\Omega\subset \mathbb{R}^2$, the global classical solutions obtained in \cite{Winkler-CPDE-2012} stabilize to the spatially uniform equilibrium $(\bar{n}_0, 0, 0)$ with $\bar{n}_0:=\frac1{|\Omega|}\int_\Omega n_0(x)dx$ as $t\rightarrow\infty$. Recently, Zhang and Li \cite{Zhang&Li-DCDS-2015} proved that such solution converges to the equilibrium $(\bar{n}_0, 0, 0)$ exponentially in time. By deriving a new type of entropy-energy estimate, Jiang et al. \cite{Jiang&Wu&Zheng-2014} generalized the result of \cite{Winkler-ARMA-2014} to general bounded domains. (If both $\chi$ and $f$ are supposed to be nonnegative and nondecreasing, it was shown by Chae, Kang and Lee \cite{Chae-DCDS-2013} that the Cauchy problem of (\ref{model-Tuval}) admits a global classical solution under the assumption that $d=2$ and $\sup_c|\chi(c) -\mu f(c)|$ be sufficiently small for some $\mu >0$. It was showed in \cite{LiYan&Li-JDE-2016} that the $2$-dimensional Cauchy problem of (\ref{model-Tuval}) admits global classical bounded solutions for regular initial data. For more results of the well-posedness of the Cauchy problem to (\ref{model-Tuval}) in the whole space we refer the reader to \cite{Chae-CPDE-2014,Duan&Lorz&Markowich-CPDE-2010,Liu&Lorz-AIHP-2011,ZhangQian-NARWA-2014,Zhang&Zheng-SIAM-2014}.)

The diffusion of bacteria sometimes depend nonlinearly on their densities \cite{Hillen-JMB-2009,Vazquez-2007,Tao&Winkler-AIHP-2013,Tao&Winkler-DCDS-2012}. Introducing this into the model (\ref{model-Tuval}) leads to the chemotaxis-Navier-Stokes system with nonlinear diffusion\cite{Di&Lorz&Markowich-DCDS-2010}
\begin{eqnarray}\label{p-laplacian-ks}
  \left\{\begin{array}{lll}
     \medskip
     n_t+u\cdot\nabla n=\nabla\cdot\left(D(n)\nabla n\right)-\nabla\cdot(n\chi(c)\nabla c),&{} x\in\Omega,\ t>0,\\
     \medskip
     c_t+u\cdot\nabla c=\Delta c-nf(c),&{}x\in\Omega,\ t>0,\\
     \medskip
     u_t+\kappa(u\cdot\nabla)u=\Delta u+\nabla P+n\nabla\Phi ,&{}x\in\Omega,\ t>0,\\
     \medskip
     \nabla\cdot u=0, &{}x\in\Omega,\ t>0.
  \end{array}\right.
\end{eqnarray}
Under the assumption $D(n)=n^{m-1}$, Di Francesco et al. \cite{Di&Lorz&Markowich-DCDS-2010} proved that when $m\in(\frac32, 2]$ the chemotaxis-Stokes system admits a global-in-time solution for general initial data in the bounded domain $\Omega\subset \mathbb{R}^2$, while the same result holds in three-dimensional setting under the constraint $m \in(\frac{7+\sqrt{217}}{12}, 2]$. Intuitively, the nonlinear diffusion $\Delta u^m$ for $m>1$ can prevent the occurrence of blow up. Based on this intuition, Winkler \cite{Winkler-CVPDE-2015} revealed the condition $m>\frac76$ to be sufficient to guarantee the boundedness of global weak solutions to the chemotaxis-stokes system for all reasonably regular initial data in three-dimensional
bounded convex domains (see also \cite{Liu&Lorz-AIHP-2011}). This partially extended a precedent result which asserted
global solvability within the larger range $m>\frac87$ , but only in a class of weak solutions locally bounded in $\bar{\Omega}\times [0,\infty)$ (cf.\cite{Tao&Winkler-AIHP-2013}). In \cite{Zhang&Li-JDE-2015}, Zhang and Li studied the system (\ref{p-laplacian-ks}) under the assumption $D(n)=mn^{m-1}$ and they proved that the model possesses at least one global weak solution under the condition that $m\geq\frac23$. Recently, Winkler \cite{Winkler-ARXIV-201704} considered (\ref{p-laplacian-ks}) under the assumption that $\chi(s)\equiv1$, $f(s)\equiv s$ for $s\geq0$ and that $\kappa=0$. It was shown that the chemotaxis-Stokes system admits global bounded weak solutions to an associated initial-boundary value problem under the assumption that $m>\frac98$. Moreover, the obtained solutions are shown to approach the spatially homogeneous steady state ($\frac1{|\Omega|}\int_\Omega n_0,0,0$) in the large time limit. For smaller values of $m > 1$, up to now existence results are limited to classes of possibly unbounded solutions (cf.\cite{Duan&Xiang-IMRN-2014}).
\vskip 3mm
Other forms of diffusion operator are also considered in Keller-Segel model, and $p$-Laplacian diffusion is one of them. Cong et al. \cite{Cong&Liu-KRM-2016} studied the following $p$-Laplacian Keller-Segel model in $d\geq3$:
 \begin{eqnarray}\label{ljgmodel}
  \left\{\begin{array}{lll}
     \medskip
     n_t=\nabla\cdot\left(|\nabla n|^{p-2}\nabla n\right)-\nabla\cdot(n\chi(c)\nabla c),&{} x\in\mathbb{R}^d,\ t>0,\\
     \medskip
     \Delta c=n,&{}x\in\mathbb{R}^d,\ t>0,\\
     \medskip
     n(x,0)=n_0(x), &{}x\in\mathbb{R}^d,
  \end{array}\right.
\end{eqnarray}
and they proved the existence of a uniform in time $L^\infty$ bounded weak solution for system (\ref{ljgmodel}) with the supercritical
diffusion exponent $1<p<\frac{3d}{d+1}$ in the multi-dimensional space $\mathbb{R}^d$
under the condition that the $L^{\frac{d(3-p)}p}$ norm of initial data is smaller than a
universal constant. They also proved the local existence of weak solutions and a
blow-up criterion for general $L^1\cap L^\infty$ initial data. By the way, Jian-Guo et al. \cite{Hui&Liu-KRM-2016} investigated the system (\ref{ljgmodel}) with the nonlocal diffusion term instead by $-(-\Delta)^{\frac\alpha2}n$ ($1<\alpha<2$) in dimension $d> 2$.

In contrast to the chemotaxis-(Navier-)Stokes system with porous-medium-type diffusion or the classical Keller-Segel model with $p$-Laplacian diffusion, very few results of global solvability are available for the full chemotaxis-Navier-Stokes system with $p$-Laplacian diffusion. This inspires us to study (\ref{CNS}).
\vskip 3mm
\textbf{Main results.} %As usual, in order for the system (\ref{CNS}) to be well-posed, it should be supplemented with some initial conditions
In order to formulate our result, we specify the precise mathematical setting: we shall subsequently consider (\ref{CNS}) along with initial conditions
\be
n(x,0)=n_0(x),\ \ c(x,0)=c_0(x)\ \ {\rm and}\ \ u(x,0)=u_0(x),\ \ \ \ x\in\Omega\label{eq-ic}
\ee
and under the boundary conditions
\be
|\nabla n|^{p-2}\frac{\partial n}{\partial \nu}=\frac{\partial c}{\partial \nu}=0\ \ {\rm and}\ \ u=0\ \ \ \ {\rm{on}}\ \partial\Omega\label{eq-bc}
\ee
in a bounded convex domain $\Omega\subset\mathbb{R}^3$ with smooth boundary, where $\nu$ is the exterior unit normal vector on $\partial\Omega$ and we assume that
\begin{eqnarray}\label{eq-initialdata-n-c-u}
  \left\{\begin{array}{lcl}
     \medskip
     n_{0}\in L^2(\Omega)\ \ \ \ {\rm is\ nonnegative\ with}\ n_0\not\equiv0,\ \ \ \ {\rm that}\\
     \medskip
     c_0\in L^\infty(\Omega)\ {\rm is\ nonnegative\ and\ such\ that} \sqrt{c_0}\in W^{1,2}(\Omega),\ \ \ \ {\rm and\ that}\\
     \medskip
     u_0\in L^2_\sigma(\Omega)
  \end{array}\right.
\end{eqnarray}
with $L_{\sigma}^2(\Omega):=\left\{\varphi\in (L^2(\Omega))^3|\nabla\cdot\varphi=0\right\}$ denotes the Hilbert space of all solenoidal vector in $L^2(\Omega)$.

With respect to the parameter function in (1.2), we shall suppose throughout the paper that
\begin{eqnarray}\label{eq-chi-f-phi}
  \left\{\begin{array}{lcl}
     \medskip
     \chi\in C^2([0,\infty)),\ \ \chi>0\ \ {\rm{in}}\ [0,\infty),\ \ \ \ {\rm that}\\
     \medskip
     f\in C^1([0,\infty)),\ \ f(0)=0,\ \ f>0\ \ {\rm{in}}\ [0,\infty),\ \ \ \ {\rm and\ that}\\
     \medskip
     \Phi\in W^{1,\infty}(\Omega),
  \end{array}\right.
\end{eqnarray}
%and moreover we will need the structural
as well as
\be
(\frac f\chi)'>0,\ \ (\frac f\chi)''\leq0,\ \ {\rm{and}}\ \ (\chi\cdot f)'\geq0\ \ \ \ {\rm{on}}\ [0,\infty).\label{eq-chi-f}
\ee

Our main result reads as follows.
\begin{thm}\label{main result}
Let $\Omega\subset\mathbb{R}^3$ be a bounded convex domain with smooth boundary, $\kappa\in\mathbb{R}$ and $p>\frac{32}{15}$. Suppose that the assumptions (\ref{eq-initialdata-n-c-u})-(\ref{eq-chi-f}) hold. Then there exists at least one global weak solution (in the sense of Definition \ref{defnaaaa} below) of (\ref{CNS}), (\ref{eq-ic}) and (\ref{eq-bc}) such that
\begin{equation*}
n\in L_{loc}^{p}([0, \infty); W^{1,p}(\Omega))\quad\mbox{and}\quad c^{\frac{1}{4}}\in L_{loc}^{4}([0, \infty); W^{1,4}(\Omega)).
\end{equation*}
\end{thm}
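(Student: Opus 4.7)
The plan is to construct the weak solution as the limit, as $\varepsilon\downarrow 0$, of classical solutions of a non-degenerate approximation of \eqref{CNS}. Concretely, for each $\varepsilon\in(0,1)$ I would replace the $p$-Laplacian by $\nabla\cdot\bigl((|\nabla n_\varepsilon|^2+\varepsilon)^{(p-2)/2}\nabla n_\varepsilon\bigr)$, damp the chemotactic sensitivity and the transport of $n_\varepsilon$ via a Yosida-type truncation $n_\varepsilon/(1+\varepsilon n_\varepsilon)$, and regularize the fluid convective term through a Stokes-eigenfunction Galerkin projection. For each fixed $\varepsilon$ the resulting system is uniformly parabolic with smooth bounded couplings, so standard parabolic-Galerkin theory furnishes a global smooth solution $(n_\varepsilon,c_\varepsilon,u_\varepsilon,P_\varepsilon)$.

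The heart of the proof consists of a set of $\varepsilon$-independent a priori estimates. From the $c$-equation and $f\geq 0$ one has $0\leq c_\varepsilon\leq\|c_0\|_{L^\infty}$ by maximum principle, and mass conservation yields $\|n_\varepsilon(\cdot,t)\|_{L^1}=\|n_0\|_{L^1}$. The structural conditions (\ref{eq-chi-f}) are exactly those introduced by Winkler \cite{Winkler-CPDE-2012,Winkler-ARMA-2014} to make an entropy-energy functional of the form
\[
\mathcal{F}(n_\varepsilon,c_\varepsilon)=\int_\Omega n_\varepsilon\ln n_\varepsilon+\tfrac{1}{2}\int_\Omega g(c_\varepsilon)|\nabla c_\varepsilon|^2,
\]
with $g$ built from $\chi/f$, satisfy --- modulo controllable contributions from $u_\varepsilon$ --- a differential inequality of the shape
\[
\tfrac{d}{dt}\mathcal{F}+c_1\int_\Omega|\nabla n_\varepsilon|^p+c_2\int_\Omega\frac{|\nabla c_\varepsilon|^4}{c_\varepsilon^3}\leq C\bigl(1+\|u_\varepsilon\|_{L^2}^2\bigr),
\]
where the boundary terms in the $c$-part are discarded using the convexity-driven inequality $\partial_\nu|\nabla c_\varepsilon|^2\leq 0$ on $\partial\Omega$. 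Coupling this with the Navier-Stokes energy identity
\[
\tfrac{1}{2}\tfrac{d}{dt}\|u_\varepsilon\|_{L^2}^2+\|\nabla u_\varepsilon\|_{L^2}^2=\int_\Omega n_\varepsilon\,u_\varepsilon\cdot\nabla\Phi
\]
then delivers uniform bounds on $n_\varepsilon$ in $L^p_{loc}([0,\infty);W^{1,p}(\Omega))$, on $c_\varepsilon^{1/4}$ in $L^4_{loc}([0,\infty);W^{1,4}(\Omega))$, and on $u_\varepsilon$ in $L^\infty_{loc}([0,\infty);L^2_\sigma(\Omega))\cap L^2_{loc}([0,\infty);H^1_0(\Omega))$.

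The threshold $p>\tfrac{32}{15}$ enters through a delicate Gagliardo-Nirenberg/Sobolev interpolation in $\mathbb{R}^3$: combining the $L^\infty_tL^1_x$ and $L^p_tW^{1,p}_x$ bounds on $n_\varepsilon$ yields space-time $L^s$-bounds on $n_\varepsilon$ which, precisely when $p$ exceeds this threshold, are strong enough to control the fluid forcing $\int n_\varepsilon u_\varepsilon\cdot\nabla\Phi$ together with the nonlinearities $n_\varepsilon\chi(c_\varepsilon)\nabla c_\varepsilon$, $u_\varepsilon\cdot\nabla n_\varepsilon$ and $u_\varepsilon\cdot\nabla c_\varepsilon$ that must be passed to the limit. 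Equipped with these estimates and with uniform dual-space bounds on $\partial_t n_\varepsilon$, $\partial_t c_\varepsilon$, $\partial_t u_\varepsilon$, an Aubin-Lions argument then yields strong convergence $(n_\varepsilon,c_\varepsilon,u_\varepsilon)\to(n,c,u)$ in $L^q_{loc}(\overline{\Omega}\times[0,\infty))$ along a subsequence. The main obstacle I expect is identifying the weak limit of the $p$-Laplacian flux $|\nabla n_\varepsilon|^{p-2}\nabla n_\varepsilon$: since $\nabla n_\varepsilon$ converges only weakly in $L^p$, I would handle this by testing the $n_\varepsilon$-equation against $n_\varepsilon$ itself to show $\|\nabla n_\varepsilon\|_{L^p}\to\|\nabla n\|_{L^p}$ and then invoking the uniform convexity of the $p$-Dirichlet energy (or a Minty-type monotonicity trick) to upgrade this to almost-everywhere convergence of $\nabla n_\varepsilon$, which in turn suffices to pass to the limit in the degenerate diffusion.
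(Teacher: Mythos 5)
Your overall architecture (non-degenerate regularization of the $p$-Laplacian, Yosida truncation of the cross-diffusion and of the convective term, Winkler-type entropy functional $\int n_\varepsilon\ln n_\varepsilon+\tfrac12\int|\nabla\Psi(c_\varepsilon)|^2$ coupled to the Navier--Stokes energy identity, Aubin--Lions compactness, and a Minty monotonicity argument for the degenerate flux) matches the paper's. However, there is a genuine gap at the central step.

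You write that the entropy-energy inequality delivers a uniform bound on $n_\varepsilon$ in $L^p_{loc}([0,\infty);W^{1,p}(\Omega))$. It does not. What the entropy inequality (the paper's Lemma 3.2) yields is a dissipation of the form $\int_0^T\!\int_\Omega(|\nabla n_\varepsilon|^2+\varepsilon)^{(p-2)/2}\frac{|\nabla n_\varepsilon|^2}{n_\varepsilon}$, i.e.\ control of $\nabla n_\varepsilon^{(p-1)/p}$ in $L^p$, not of $\nabla n_\varepsilon$ itself. As the authors explain in Remark 1.2, running the analogue of the Winkler/Zhang--Li argument at this level only produces $\int_0^T\!\int_\Omega|\nabla n_\varepsilon|^{p-3/4}\leq C(T+1)$, which is too weak to identify the weak limit of $|\nabla n_\varepsilon|^{p-2}\nabla n_\varepsilon$ in $L^{p'}$ and too weak to run your ``test against $n_\varepsilon$'' step (that step requires $n_\varepsilon$ uniformly in $L^2$ and $\nabla n_\varepsilon$ uniformly in $L^p$ in space-time). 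The missing ingredient is the bootstrap of Lemma 5.1/Corollary 5.2: testing the $n_\varepsilon$-equation against $n_\varepsilon^{m-1}$ and iterating the exponent from $m_0=1$ via $m_{k+1}=m_k(p-\tfrac43)+3(p-2)$ until $m=2$. The fixed point of this map equals $2$ exactly at $p=\tfrac{32}{15}$, which is precisely where the threshold comes from — not from a single Gagliardo--Nirenberg interpolation as you suggest. This bootstrap is also why the paper strengthens the hypothesis on the initial data from $n_0\in L\log L$ (as in the porous-medium case) to $n_0\in L^2$: one needs $\int_\Omega n_{0\varepsilon}^2$ uniformly bounded to run the final step of the iteration and the Minty argument.

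Your sketch of the limit identification (norm convergence of $\|\nabla n_\varepsilon\|_{L^p}$ plus uniform convexity) would work if the $L^p_tW^{1,p}_x$ estimate on $n_\varepsilon$ were available, and is a legitimate alternative to the paper's $I_1+I_2+I_3$ Minty decomposition; but without the bootstrap you do not have that estimate, so as written the proof does not close.
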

\begin{rem}\label{remone} Compared with the result of \cite{Zhang&Li-JDE-2015}, one may wonder why the condition of $p$ in this paper is not less than 2, but larger than $\frac{32}{15}$. There are two reasons. One is a technical reason, and the other is more essential: the main difficulties appear when we want to show that the solutions $(n_\varepsilon, c_\varepsilon, u_\varepsilon)$ of regularized system satisfies
\be
|\nabla n_\varepsilon|^{p-2}\nabla n_\varepsilon\rightharpoonup |\nabla n|^{p-2}\nabla n\ \ \ \ \mbox{in}\ L_{loc}^{q}(\bar{\Omega}\times[0, \infty))
\ee
for some $q>1$. This requires a higher regularity of the solutions $(n_\varepsilon, c_\varepsilon, u_\varepsilon)$ than that of \cite{Zhang&Li-JDE-2015}.
Monotonic method (cf. \cite[Section 2.1]{J.L.Lions-DGVP-1969}, see also \cite{Bendahmane2009}) is used to handle this problem, but it's crucial to derive the a priori estimates. In fact, we need to show that
\begin{equation}
\int^t_0\int_\Omega|\nabla n_\varepsilon|^p\leq C(t+1)\ \ \ \ {\rm{for\ all}}\ \ t>0,\label{eqkksss}
\end{equation}
but using the same method used in \cite{Winkler-AIHP-2016} and \cite{Zhang&Li-JDE-2015}, we can not get the desired result, which can only obtain
\begin{equation}
\int^t_0\int_\Omega|\nabla n_\varepsilon|^{p-\frac34}\leq C(t+1)\ \ \ \ {\rm{for\ all}}\ \ t>0.\label{eqkss}
\end{equation}
We overcome this difficulty by means of a bootstrap argument, but we need a stronger hypotheses: we need $n_0\in L^2(\Omega)$ rather than $n_0\in L\log L(\Omega)$.
And also the condition $p>\frac{32}{15}$ is required for the sake of deriving a crucial priori estimates. Actually, we shall prove that (see Section 6)
\be
|\nabla n_\varepsilon|^{p-2}\nabla n_\varepsilon\rightharpoonup |\nabla n|^{p-2}\nabla n\ \ \ \ \mbox{in}\ L_{loc}^{p'}(\bar{\Omega}\times[0, \infty)).
\ee
\end{rem}

The rest of this paper is organized as follows. In Section 2, we introduce a family of regularized problems and give some preliminary properties. Based on an energy-type inequality, a priori estimates are given in Section 3. Section 4 is devoted to showing the global existence of the regularized problems. In Section 5, we further establish useful uniform estimates. Finally, we give the proof of the main result in Section 6.
\vskip 3mm
{\it Notations.} Throughout the paper, for any vectors $v\in \mathbb{R}^3$ and $w\in \mathbb{R}^3$, we denote by $v\otimes w$ the matrix $A_{3\times3}$ with $a_{ij}=v_iw_j$ for $i,j\in\{1,2,3\}$. We set $L\log L(\Omega)$ is the standard Orlicz space and $L_{\sigma}^2(\Omega):=\left\{\varphi\in (L^2(\Omega))^3|\nabla\cdot\varphi=0\right\}$ denotes the Hilbert space of all solenoidal vector in $L^2(\Omega)$. As usual $\mathcal {P}$ denotes the Helmholtz projection in $L^2(\Omega)$. We write $W_{0, \sigma}^{1,2}(\Omega):=W_{0}^{1,2}(\Omega)\cap L_{\sigma}^2(\Omega)$ and $C_{0,\sigma}^{\infty}(\Omega):=C_{0}^{\infty}(\Omega)\cap L_{\sigma}^2(\Omega)$. We represent $A$ as the realization of Stokes operator $-\mathcal {P}\Delta$ in $L_{\sigma}^2(\Omega)$ with domain $D(A):=W^{2,2}(\Omega)\cap W_{0, \sigma}^{1,2}(\Omega)$. Also $n(\cdot,t)$, $c(\cdot,t)$ and $u(\cdot,t)$ will be denoted sometimes by $n(t)$, $c(t)$ and $u(t)$.
\section{Regularized problem}
Our intention is to construct a global weak solution as the limit of smooth solutions of appropriately regularized problem. According to the idea from \cite{Winkler-AIHP-2016} (see also \cite{Tao&Winkler-AIHP-2013,Zhang&Li-JDE-2015}), let us first consider the approximate problem
\begin{eqnarray}\label{model-approximate}
  \left\{\begin{array}{lcl}
     \medskip
       \partial_t{n_\varepsilon}+{u_\varepsilon}\cdot\nabla {n_\varepsilon}=\nabla\cdot\left((|\nabla {n_\varepsilon}|^2+\varepsilon)^\frac{p-2}2\nabla {n_\varepsilon}\right)-\nabla\cdot({n_\varepsilon}{F_\varepsilon}'({n_\varepsilon})\chi({c_\varepsilon})\nabla {c_\varepsilon}),&& x\in\Omega,\ t>0,\\
     \medskip
     \partial_t {c_\varepsilon}+{u_\varepsilon}\cdot\nabla {c_\varepsilon}=\Delta {c_\varepsilon}-{F_\varepsilon}({n_\varepsilon})f({c_\varepsilon}),&& x\in\Omega,\ t>0,\\
     \medskip
     \partial_t{u_\varepsilon}+({Y_\varepsilon}{u_\varepsilon}\cdot\nabla) {u_\varepsilon}=\Delta {u_\varepsilon}+\nabla P_\varepsilon+{n_\varepsilon}\nabla\Phi,&& x\in\Omega,\ t>0,\\
     \medskip
     \nabla\cdot {u_\varepsilon}=0,&& x\in\Omega,\ t>0,\\
     \medskip
     \frac{\partial {n_\varepsilon}}{\partial \nu}=\frac{\partial {c_\varepsilon}}{\partial \nu}=0,\ {u_\varepsilon}=0,  && x\in\partial\Omega,\ t>0,\\
     \medskip
     {n_\varepsilon}(x,0)=n_{0\varepsilon},\ {c_\varepsilon}(x,0)=c_{0\varepsilon},\ {u_\varepsilon}(x,0)=u_{0\varepsilon},\   && x\in\Omega
  \end{array}\right.
\end{eqnarray}
for $\varepsilon\in(0,1)$, where the families of approximate initial data $n_{0\varepsilon}\geq 0$, $c_{0\varepsilon}\geq 0$ and $u_{0\varepsilon}$ have the properties that
\begin{eqnarray}\label{approximate-initialdata-n}
  \left\{\begin{array}{lcl}
     \medskip
     n_{0\varepsilon}\in C^\infty_0(\Omega),\ \ \int_\Omega n_{0\varepsilon}=\int_\Omega n_0\ \ \ \ {\rm{for\ all}}\ \varepsilon\in(0,1)\ \ \ \ {\rm and}\\
     \medskip
     n_{0\varepsilon}\rightarrow n_0\ \ {\rm{in}}\ L^2(\Omega)\ \ \ \ as\ \ \varepsilon\searrow0,
  \end{array}\right.
\end{eqnarray}
that
\begin{eqnarray}\label{approximate-initialdata-c}
  \left\{\begin{array}{lcl}
     \medskip
     \sqrt{c_{0\varepsilon}}\in C^\infty_0(\Omega),\ \ \|c_{0\varepsilon}\|_{L^\infty(\Omega)}\leq\|c_{0}\|_{L^\infty(\Omega)}\ \ \ \ {\rm{for\ all}}\ \varepsilon\in(0,1)\ \ \ \ {\rm and}\\
     \medskip
     \sqrt{c_{0\varepsilon}}\rightarrow \sqrt{c_0}\ \ a.e.\ {\rm{in}}\ \Omega\ \ and\ \ W^{1,2}(\Omega)\ \ \ \ as\ \ \varepsilon\searrow0,
  \end{array}\right.
\end{eqnarray}
and that
\begin{eqnarray}\label{approximate-initialdata-u}
  \left\{\begin{array}{lcl}
     \medskip
     u_{0\varepsilon}\in C^\infty_{0,\sigma}(\Omega),\ \ {\rm{with}}\ \ \|u_{0\varepsilon}\|_{L^2(\Omega)}=\|u_{0}\|_{L^2(\Omega)}\ \ \ \ {\rm{for\ all}}\ \varepsilon\in(0,1)\ \ \ \ {\rm and}\\
     \medskip
     u_{0\varepsilon}\rightarrow u_0\ \ \ {\rm{in}}\ L^2(\Omega)\ \ \ \ as\ \ \varepsilon\searrow0.
  \end{array}\right.
\end{eqnarray}
The approximate function ${F_\varepsilon}$ in (\ref{model-approximate}) \cite{Winkler-AIHP-2016} was chosen as
\begin{eqnarray*}
{F_\varepsilon}(s):=\frac1\varepsilon\ln(1+\varepsilon s),\ \ \ \ {\rm{for\ all}}\ s\geq0,
\end{eqnarray*}
and the standard Yosida approximate ${Y_\varepsilon}$ \cite{Sohr-2001-book} was defined by
\begin{eqnarray*}
{Y_\varepsilon} v:=(1+\varepsilon A)^{-1}v,\ \ \ \ {\rm{for\ all}}\ v\in L^2_\sigma(\Omega).
\end{eqnarray*}
Let us furthermore note that the choice of ${F_\varepsilon}$ ensures that
\be
0\leq{F_\varepsilon}'(s)=\frac1{1+\varepsilon s}\leq1,\ \ {\rm{and}}\ \ 0\leq{F_\varepsilon}(s)\leq s\ \ \ \ {\rm{for\ all}}\ s\geq0,\label{eq-F-eq1}
\ee
\be
s{F_\varepsilon}'(s)=\frac s{1+\varepsilon s}\leq\frac1\varepsilon,\ \ \ \ {\rm{for\ all}}\ s\geq0,\label{eq-F-eq2}
\ee
and that
\be
{F_\varepsilon}'(s)\nearrow1\ \ {\rm{and}}\ \ {F_\varepsilon}(s)\nearrow s\ \ \ \ as\ \ \ \ \varepsilon\searrow0\ \ \ \ {\rm{for\ all}}\ s\geq0.\label{eq-F-eq3}
\ee
All the above approximate problems admit for local-in-time smooth solutions:
\begin{lem}\label{lem2.1}
Let $p\geq2$, then for each $\varepsilon\in(0,1)$, there exist $T_{max,\varepsilon}\in(0,\infty]$ and functions ${n_\varepsilon}>0$, ${c_\varepsilon}>0$ in $\bar{\Omega}\times(0,T_{max,\varepsilon})$, and ${u_\varepsilon}$ fulfilling
\begin{eqnarray}
  \left.\begin{array}{lcl}
     \medskip
       {n_\varepsilon}\in C^0(\bar{\Omega}\times[0,T_{max,\varepsilon}))\cap C^{2,1}(\bar{\Omega}\times(0,T_{max,\varepsilon})),\\
     \medskip
     {c_\varepsilon}\in C^0(\bar{\Omega}\times[0,T_{max,\varepsilon}))\cap C^{2,1}(\bar{\Omega}\times(0,T_{max,\varepsilon})),\ \ \ \ and\\
     \medskip
     {u_\varepsilon}\in C^0(\bar{\Omega}\times[0,T_{max,\varepsilon});\mathbb{R}^3)\cap C^{2,1}(\bar{\Omega}\times(0,T_{max,\varepsilon});\mathbb{R}^3)
  \end{array}\right.
\end{eqnarray}
such that $({n_\varepsilon},{c_\varepsilon},{u_\varepsilon})$ is a classical solution of (\ref{model-approximate}) in $\Omega\times(0,T_{max,\varepsilon})$ with some $P_\varepsilon\in C^{1,0}(\Omega\times(0,T_{max,\varepsilon}))$. Moreover, if $T_{max,\varepsilon}<\infty$, then
\be
\|{n_\varepsilon}(\cdot,t)\|_{L^\infty(\Omega)}+\|{c_\varepsilon}(\cdot,t)\|_{W^{1,q}(\Omega)}+\|A^\alpha{u_\varepsilon}(\cdot,t)\|_{L^2(\Omega)}\rightarrow\infty,\ \ t\nearrow T_{max,\varepsilon}\nonumber
\ee
for all $q>3$ and $\alpha>\frac34$.
\end{lem}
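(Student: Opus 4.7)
The plan is to construct $(n_\varepsilon, c_\varepsilon, u_\varepsilon)$ locally in time by a Banach fixed-point argument, exploiting two crucial effects of the regularization. First, replacing $|\nabla n|^{p-2}$ by $(|\nabla n|^2 + \varepsilon)^{(p-2)/2}$ renders the diffusion in the $n$-equation uniformly non-degenerate with ellipticity constant bounded below by $\varepsilon^{(p-2)/2}$, so that the equation falls within the scope of classical quasilinear parabolic theory (Amann, or Ladyzhenskaya--Solonnikov--Ural'ceva). Second, since $Y_\varepsilon u = (1+\varepsilon A)^{-1}u$ maps $L^2_\sigma(\Omega)$ into $D(A)$ with $\|A Y_\varepsilon u\|_{L^2}\leq \varepsilon^{-1}\|u\|_{L^2}$, the convective term $(Y_\varepsilon u_\varepsilon\cdot\nabla)u_\varepsilon$ becomes a locally Lipschitz, bounded perturbation of the Stokes operator, so the momentum equation can be treated via the analytic semigroup framework.

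Concretely, I would set up a contraction $\Psi:(\bar n,\bar c,\bar u)\mapsto (n,c,u)$ on a small ball in
\[
X_T := C^0\bigl([0,T];C^\beta(\bar\Omega)\bigr)\times C^0\bigl([0,T];W^{1,q}(\Omega)\bigr)\times C^0\bigl([0,T];D(A^\alpha)\bigr)
\]
for some $\beta\in(0,1)$, $q>3$ and $\alpha>3/4$, centred on the initial data. Given $(\bar n,\bar c,\bar u)$, one solves the linearised $n$-equation
\[
\partial_t n+\bar u\cdot\nabla n=\nabla\cdot\bigl((|\nabla n|^2+\varepsilon)^{(p-2)/2}\nabla n\bigr)-\nabla\cdot\bigl(\bar n F_\varepsilon'(\bar n)\chi(\bar c)\nabla\bar c\bigr)
\]
by non-degenerate quasilinear parabolic theory, using the bound $\bar n F_\varepsilon'(\bar n)\leq 1/\varepsilon$ from (\ref{eq-F-eq2}) to tame the drift uniformly in $\|\bar n\|_\infty$; one solves the linear heat equation for $c$ with bounded source $-F_\varepsilon(\bar n)f(\bar c)$ via the Neumann heat semigroup; and one solves the linearised momentum equation for $u$ via the analytic Stokes semigroup generated by $-A$ with $L^2$ forcing $\mathcal{P}\bigl(\bar n\nabla\Phi-(Y_\varepsilon\bar u\cdot\nabla)\bar u\bigr)$. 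Standard $L^p$--$L^q$ smoothing estimates for the heat and Stokes semigroups then show that $\Psi$ maps a small ball into itself and is a contraction on some interval $[0,T]$ with $T=T(\varepsilon)>0$, yielding a unique fixed point; parabolic and Stokes regularity bootstraps upgrade this to the claimed $C^{2,1}$ classes, and the pressure $P_\varepsilon$ is recovered through the Helmholtz decomposition.

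Positivity of $n_\varepsilon$ and $c_\varepsilon$ on $\bar\Omega\times(0,T_{\max,\varepsilon})$ follows from the strong maximum principle applied to each scalar equation separately, using $n_{0\varepsilon},c_{0\varepsilon}\geq 0$ with $\int_\Omega n_{0\varepsilon}=\int_\Omega n_0>0$ and $F_\varepsilon\geq 0$, $f\geq 0$. For the extensibility criterion one iterates: the length of the interval on which the fixed-point argument succeeds when restarted from $(n_\varepsilon(t_0),c_\varepsilon(t_0),u_\varepsilon(t_0))$ depends only on $\varepsilon$ and on the quantity $\|n_\varepsilon(t_0)\|_{L^\infty}+\|c_\varepsilon(t_0)\|_{W^{1,q}}+\|A^\alpha u_\varepsilon(t_0)\|_{L^2}$, so that boundedness of this expression as $t_0\nearrow T_{\max,\varepsilon}<\infty$ would permit continuation beyond $T_{\max,\varepsilon}$, contradicting maximality. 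The main technical point, and in my view the principal obstacle, is arranging the smoothing estimates so that \textbf{precisely} the three norms appearing in the blow-up criterion drive the contraction: one needs $q>3$ to control $\chi(\bar c)\nabla\bar c$ in an $L^q$ space high enough to serve as a legitimate lower-order drift in the quasilinear $n$-equation, and $\alpha>3/4$ so that the embedding $D(A^\alpha)\hookrightarrow L^\infty(\Omega)$ in three dimensions makes $(Y_\varepsilon\bar u\cdot\nabla)\bar u$ an admissible perturbation within the analytic Stokes semigroup framework.
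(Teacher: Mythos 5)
Your proposal takes a genuinely different route from the paper. The paper's proof of Lemma~\ref{lem2.1} is a one-line citation to established methodology: the Schauder fixed point theorem combined with parabolic regularity theory and Stokes theory, following Tao--Winkler and Winkler's earlier chemotaxis--fluid papers. You propose instead a Banach fixed-point (contraction) argument. The facts you invoke are correct and are the same facts that underpin the Schauder route: uniform ellipticity of the regularized diffusion with lower bound $\varepsilon^{(p-2)/2}$ for $p\geq2$; the bound $nF_\varepsilon'(n)\leq\varepsilon^{-1}$ from \eqref{eq-F-eq2}; smoothing furnished by the Yosida approximation $Y_\varepsilon$ and the analytic Stokes semigroup; positivity by the strong maximum principle using $f(0)=0$ and the factor $n$ in $nF_\varepsilon'(n)\chi(c)\nabla c$; extensibility by restarting the fixed point from time $t_0$. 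The trade-off is clear: a successful Banach argument would give uniqueness for free (a conclusion Lemma~\ref{lem2.1} does not claim), whereas Schauder is technically lighter because it only demands continuity and compactness.

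That said, your contraction step is underdeveloped in a way that matters. In the map $\Psi$ you describe, the $n$-component solves a \emph{quasilinear} parabolic problem: the diffusion coefficient $(|\nabla n|^2+\varepsilon)^{(p-2)/2}$ depends on the output $\nabla n$, not on the input $\bar n$. To show that $\Psi$ is a contraction on your $X_T$, whose $n$-slot is only $C^0([0,T];C^\beta(\bar\Omega))$, you must compare two such quasilinear solutions $n_1,n_2$; the equation for $n_1-n_2$ has principal coefficients depending on $\nabla n_1,\nabla n_2$, and the $C^\beta$ norm over the ball does not control these gradients at all. Closing the loop would require first a parabolic bootstrap to uniform $C^{1+\gamma,(1+\gamma)/2}$ bounds over the ball, then Schauder estimates for the linearised difference equation with those upgraded coefficients, and finally a smallness-of-$T$ argument tracked through all the constants --- none of which is indicated. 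This is precisely what the Schauder fixed point in the paper's cited references side-steps: continuity and compactness of the iteration map follow from parabolic smoothing alone, with no Lipschitz estimate needed. One further slip: the local existence time cannot depend on $\varepsilon$ alone; it must also depend on the norms of the data at the (re-)starting time, which is exactly what your own extensibility argument must invoke to conclude.
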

The proof of this lemma is based on well-established methods involving the Schauder fixed point theorem, the standard regularity theories of parabolic equations and the Stokes system (see \cite{Tao&Winkler-AIHP-2013,Winkler-CPDE-2012,Winkler-AIHP-2016} for instance).

The following estimates of ${n_\varepsilon}$ and ${c_\varepsilon}$ are basic but important in the proof of our result.
\begin{lem}
For each $\varepsilon\in(0,1)$, the solution of (\ref{model-approximate}) satisfies
\be
\int_\Omega{n_\varepsilon}(\cdot,t)=\int_\Omega n_0\ \ \ \ for\ all\ t\in(0,T_{max,\varepsilon})\label{eq-n-mass-conservation}
\ee
and
\be
\|{c_\varepsilon}(\cdot,t)\|_{L^\infty(\Omega)}\leq\| c_0\|_{L^\infty(\Omega)}:=s_0\ \ \ \ %in\ \Omega\times(0,T_{max,\varepsilon}).\label{eq-c-maxmum-principle-s0}
for\ all\ t\in(0,T_{max,\varepsilon}).\label{eq-c-maxmum-principle-s0}
\ee
\end{lem}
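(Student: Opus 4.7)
The plan is to prove the two bounds separately by elementary arguments already available at this stage: the conservation of mass in (\ref{eq-n-mass-conservation}) is an integration-by-parts computation, and the $L^\infty$ estimate (\ref{eq-c-maxmum-principle-s0}) is a standard parabolic maximum principle applied to a supersolution.

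For (\ref{eq-n-mass-conservation}), I would integrate the first equation of (\ref{model-approximate}) over $\Omega$. The convective term rewrites as $\int_\Omega u_\varepsilon\cdot\nabla n_\varepsilon = \int_\Omega \nabla\cdot(n_\varepsilon u_\varepsilon)$ thanks to $\nabla\cdot u_\varepsilon=0$, and this vanishes by the divergence theorem together with $u_\varepsilon|_{\partial\Omega}=0$. The $p$-Laplacian-type flux $(|\nabla n_\varepsilon|^2+\varepsilon)^{(p-2)/2}\nabla n_\varepsilon\cdot\nu$ vanishes on $\partial\Omega$ because $\partial_\nu n_\varepsilon = 0$, and the chemotactic flux $n_\varepsilon F_\varepsilon'(n_\varepsilon)\chi(c_\varepsilon)\nabla c_\varepsilon\cdot\nu$ vanishes because $\partial_\nu c_\varepsilon=0$. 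Hence $\frac{d}{dt}\int_\Omega n_\varepsilon(\cdot,t)=0$, and combining this with $\int_\Omega n_{0\varepsilon}=\int_\Omega n_0$ from (\ref{approximate-initialdata-n}) yields the desired identity.

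For (\ref{eq-c-maxmum-principle-s0}), set $s_0:=\|c_0\|_{L^\infty(\Omega)}$ and consider the second equation of (\ref{model-approximate}). Since $n_\varepsilon\geq 0$ implies $F_\varepsilon(n_\varepsilon)\geq 0$, and $f(c_\varepsilon)\geq 0$ whenever $c_\varepsilon\geq 0$ by (\ref{eq-chi-f-phi}), the function $\bar c\equiv s_0$ is a classical supersolution of the parabolic problem satisfied by $c_\varepsilon$ (note $\partial_\nu\bar c=0$ and $\bar c\geq c_{0\varepsilon}$ by (\ref{approximate-initialdata-c})). One can formalize this either by a direct comparison argument for the linear (in $c_\varepsilon$) parabolic operator $\partial_t+u_\varepsilon\cdot\nabla-\Delta+F_\varepsilon(n_\varepsilon)\frac{f(c_\varepsilon)}{c_\varepsilon}$, or, more robustly in the presence of $f(0)=0$, by testing the equation with $(c_\varepsilon-s_0)_+$ after integration over $\Omega$: the Laplacian contributes $-\int_\Omega|\nabla(c_\varepsilon-s_0)_+|^2$, the drift term vanishes since $\nabla\cdot u_\varepsilon=0$ together with $u_\varepsilon|_{\partial\Omega}=0$, and the reaction term is nonpositive on the set $\{c_\varepsilon>s_0\}$, yielding $\frac{d}{dt}\int_\Omega (c_\varepsilon-s_0)_+^2\leq 0$ and hence $(c_\varepsilon-s_0)_+\equiv 0$.

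Neither step presents a genuine obstacle; the only minor care required is to justify the comparison argument (or the Stampacchia-type truncation) rigorously given the regularity of $c_\varepsilon$ provided by Lemma \ref{lem2.1}, but since $c_\varepsilon\in C^{2,1}(\bar\Omega\times(0,T_{\max,\varepsilon}))$ and $\partial_\nu c_\varepsilon=0$, the classical parabolic maximum principle applies directly.
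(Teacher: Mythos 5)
Your proof is correct and follows the same route as the paper's own (very terse) argument: spatial integration of the first equation using $\nabla\cdot u_\varepsilon=0$ and the no-flux/Dirichlet boundary conditions for the mass identity, and the parabolic maximum principle for the $L^\infty$ bound on $c_\varepsilon$. The testing-with-$(c_\varepsilon-s_0)_+$ variant you offer is a clean way to make the maximum-principle step explicit and is perfectly compatible with the regularity asserted in Lemma~\ref{lem2.1}.
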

\begin{proof}
Integrating the first equation in (\ref{model-approximate}) and using (\ref{approximate-initialdata-n}), we obtain (\ref{eq-n-mass-conservation}).
And an application of the maximum principle to the second equation in (\ref{model-approximate}) gives (\ref{eq-c-maxmum-principle-s0}).
\end{proof}
\section{An energy-type inequality}
This section is devoted to establish an energy-type inequality which will play a key role in the derivation of further estimates in Section \ref{sectionfour} where we will show that the solution of the approximate problem (\ref{model-approximate}) is actually global in time.

The following inequality is decisive in our proof, which is derived from the first two equations in (\ref{model-approximate}). The main idea of the proof is similar
to the strategy introduced in \cite[Section 3]{Winkler-AIHP-2016}(see also \cite{Winkler-CPDE-2012,Zhang&Li-JDE-2015}). %However, since the slow $p$-Laplacian diffusion
%case is involved in the computations, we prefer to give enough details for the convenience of the reader.
\begin{lem}\label{lem3.1}
Assume that $p\geq2$. Let (\ref{eq-chi-f-phi}) and (\ref{eq-chi-f}) hold. There exists $K\geq1$ such that for any $\varepsilon\in(0,1)$, the solution of (\ref{model-approximate}) satisfies
\begin{eqnarray}
&&\frac d{dt}\left(\int_\Omega{n_\varepsilon}\ln {n_\varepsilon}+\frac12\int_\Omega|\nabla\Psi({c_\varepsilon})|^2\right)\nonumber\\
&&+\frac1K\left\{\int_\Omega(|\nabla{n_\varepsilon}|^2+\varepsilon)^\frac{p-2}2\frac{|\nabla{n_\varepsilon}|^2}{{n_\varepsilon}}+\int_\Omega\frac{|D^2{c_\varepsilon}|^2}{{c_\varepsilon}}+\int_\Omega\frac{|\nabla{c_\varepsilon}|^4}{c^3_\varepsilon}\right\}\nonumber\\
&\leq& K\int_\Omega|\nabla{u_\varepsilon}|^2,\label{eq-lem3.1}
\end{eqnarray}
where $\Psi(s):=\int^s_1\frac{d\sigma}{\sqrt{g(\sigma)}}$ with $g(s):=\frac{f(s)}{\chi(s)}$.
\end{lem}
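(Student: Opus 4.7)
The plan is to mimic the entropy--energy strategy of \cite{Winkler-AIHP-2016}, carrying out the computation separately for each of the two contributions on the left, and then exploiting the structural hypotheses \ref{eq-chi-f} so that the cross terms mixing $n_\varepsilon$ and $c_\varepsilon$ either cancel or are absorbed. The starting point is to test the first equation of \ref{model-approximate} against $\ln n_\varepsilon+1$. Since $\nabla\cdot u_\varepsilon=0$ and $u_\varepsilon|_{\partial\Omega}=0$, the convective contribution vanishes; the $p$-Laplacian part produces the first dissipation term $\int_\Omega(|\nabla n_\varepsilon|^2+\varepsilon)^{(p-2)/2}|\nabla n_\varepsilon|^2/n_\varepsilon$; and the Keller--Segel flux leaves behind the cross term $\int_\Omega F_\varepsilon'(n_\varepsilon)\chi(c_\varepsilon)\nabla n_\varepsilon\cdot\nabla c_\varepsilon$, which is not yet signed.

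The second, and main, piece is the evolution of $\frac12\int_\Omega|\nabla\Psi(c_\varepsilon)|^2$. From $\Psi'=g^{-1/2}$ and $\Psi''=-g'/(2g^{3/2})$ one writes
$$\frac{d}{dt}\frac12\int_\Omega|\nabla\Psi(c_\varepsilon)|^2=\int_\Omega\Psi'(c_\varepsilon)^2\nabla c_\varepsilon\cdot\nabla\partial_t c_\varepsilon+\int_\Omega\Psi'(c_\varepsilon)\Psi''(c_\varepsilon)|\nabla c_\varepsilon|^2\partial_t c_\varepsilon,$$
substitutes the second equation of \ref{model-approximate} into $\partial_t c_\varepsilon$, and integrates by parts. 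Coupled with the pointwise Bochner identity $\tfrac12\Delta|\nabla c_\varepsilon|^2=|D^2 c_\varepsilon|^2+\nabla c_\varepsilon\cdot\nabla\Delta c_\varepsilon$ and with convexity of $\Omega$, which guarantees $\frac{\partial|\nabla c_\varepsilon|^2}{\partial\nu}\le 0$ on $\partial\Omega$, this yields the two remaining dissipation terms $\int_\Omega|D^2 c_\varepsilon|^2/c_\varepsilon$ and $\int_\Omega|\nabla c_\varepsilon|^4/c_\varepsilon^3$ on the left, with constants that are uniform in $\varepsilon$ thanks to the $L^\infty$-bound $\|c_\varepsilon\|_{L^\infty}\le s_0$ from \ref{eq-c-maxmum-principle-s0}.

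Adding the two identities, the chemotaxis cross term from the first step is absorbed by the cross term produced in the second step. This is exactly where the structural conditions enter: $(f/\chi)'>0$ (equivalently $g'>0$) signs the principal interaction between $\nabla n_\varepsilon$ and $\nabla c_\varepsilon$; $(f/\chi)''\le 0$ controls the second-order remainder coming from $\Psi''$; and $(\chi\cdot f)'\ge 0$ absorbs the contribution of the consumption term $-F_\varepsilon(n_\varepsilon)f(c_\varepsilon)$, using in addition $n_\varepsilon F_\varepsilon'(n_\varepsilon)\le F_\varepsilon(n_\varepsilon)\le n_\varepsilon$ from \ref{eq-F-eq1}. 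The only remaining unsigned contribution is the fluid term coming from $u_\varepsilon\cdot\nabla c_\varepsilon$ in the second equation; after one integration by parts it reduces to an expression of the form $\int_\Omega u_\varepsilon\cdot\nabla c_\varepsilon\cdot(\text{bounded function of }c_\varepsilon)$, which by Hölder and Young is bounded by a small multiple of $\int_\Omega|\nabla c_\varepsilon|^4/c_\varepsilon^3$ plus $C\|u_\varepsilon\|_{L^2}^2$. The last piece is then dominated by $K\int_\Omega|\nabla u_\varepsilon|^2$ via Poincaré's inequality, which is available thanks to the homogeneous Dirichlet boundary condition on $u_\varepsilon$.

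The main obstacle is the delicate bookkeeping in the second step: identifying all the second-order terms generated when $\Psi'^{\,2}\nabla c_\varepsilon\cdot\nabla\Delta c_\varepsilon$ is integrated by parts, and showing that the three particular combinations of $|D^2 c_\varepsilon|^2/c_\varepsilon$, $|\nabla c_\varepsilon|^4/c_\varepsilon^3$, and the chemotaxis cross term precisely dominate every unsigned remainder solely by means of the three sign conditions in \ref{eq-chi-f}, the convexity of $\Omega$, and the $L^\infty$-bound on $c_\varepsilon$, with a universal constant $K$ independent of $\varepsilon\in(0,1)$. This is essentially the three-dimensional analogue of the Winkler identity, adapted to the $p$-Laplacian dissipation (for which the relevant weighted gradient term still fits into the same framework once $p\ge 2$).
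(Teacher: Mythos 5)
Your overall architecture is exactly the one in \cite[Lemmas 3.1--3.4]{Winkler-AIHP-2016}, to which the paper simply defers: test the $n_\varepsilon$-equation against $\ln n_\varepsilon+1$, differentiate $\tfrac12\int|\nabla\Psi(c_\varepsilon)|^2$, use Bochner plus convexity for the boundary term, and let the three sign conditions in (\ref{eq-chi-f}) kill the cross terms. That part of the proposal is correct, and in particular the cancellation of the chemotaxis cross term $\int F_\varepsilon'(n_\varepsilon)\chi(c_\varepsilon)\nabla n_\varepsilon\cdot\nabla c_\varepsilon$ against the $\nabla n_\varepsilon\cdot\nabla c_\varepsilon$ piece coming from the consumption term (and the nonpositivity of the remaining consumption remainder via $(\chi f)'\geq 0$) is handled properly at the level of description.

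The fluid step, however, is misstated in a way that would not close. After substituting $\partial_t c_\varepsilon$ and integrating by parts once using $\nabla\cdot u_\varepsilon=0$ and $u_\varepsilon|_{\partial\Omega}=0$, the two pieces of the form $\int h(c_\varepsilon)\,|\nabla c_\varepsilon|^2\,(u_\varepsilon\cdot\nabla c_\varepsilon)$ cancel each other exactly, and what survives is
$-\int_\Omega \Psi'(c_\varepsilon)^2\,\nabla c_\varepsilon^{\,T}(\nabla u_\varepsilon)\nabla c_\varepsilon$, in which the \emph{gradient} of $u_\varepsilon$ appears, not $u_\varepsilon$ itself. The form you write, $\int_\Omega u_\varepsilon\cdot\nabla c_\varepsilon\cdot(\text{bounded function of }c_\varepsilon)$, cannot be the remainder: any integrand of that type equals $u_\varepsilon\cdot\nabla H(c_\varepsilon)$ for a primitive $H$ and integrates to zero under the divergence-free and no-slip conditions. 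Consequently, your Hölder/Young step claiming a bound by $\delta\int|\nabla c_\varepsilon|^4/c_\varepsilon^3+C\|u_\varepsilon\|_{L^2}^2$ does not arise, and even if one attempted it on the expression you wrote the exponents do not match ($\|u_\varepsilon\|_{L^{4/3}}$ would appear, not $\|u_\varepsilon\|_{L^2}^2$). The correct and in fact shorter estimate is
$\bigl|\int\Psi'(c_\varepsilon)^2\nabla c_\varepsilon^{\,T}(\nabla u_\varepsilon)\nabla c_\varepsilon\bigr|\leq \frac{1}{C_g^-}\int\frac{|\nabla c_\varepsilon|^2|\nabla u_\varepsilon|}{c_\varepsilon}\leq \delta\int\frac{|\nabla c_\varepsilon|^4}{c_\varepsilon^3}+C_\delta\int c_\varepsilon|\nabla u_\varepsilon|^2\leq \delta\int\frac{|\nabla c_\varepsilon|^4}{c_\varepsilon^3}+C_\delta s_0\int|\nabla u_\varepsilon|^2$,
which already gives the $K\int|\nabla u_\varepsilon|^2$ on the right; no Poincar\'e inequality is needed at this stage (it is invoked only later, in Lemma \ref{lem3.2}, to absorb $\int n_\varepsilon\nabla\Phi\cdot u_\varepsilon$). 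You should replace the last paragraph of your argument with this computation.
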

\begin{proof}
The proof is based on the first two equations in (\ref{model-approximate}) and  integration by parts,
we refer readers to \cite[Lemmas 3.1-3.4]{Winkler-AIHP-2016} for details.
\end{proof}

Using the third equation of (\ref{model-approximate}), we can absorb the term on the right hand of (\ref{eq-lem3.1}) appropriately and yield
the following energy-type inequality which contains all the components ${n_\varepsilon}$, ${c_\varepsilon}$ and ${u_\varepsilon}$.
\begin{lem}\label{lem3.2}
Assume that $p\geq2$. Let (\ref{eq-chi-f-phi}), (\ref{eq-chi-f}) hold, and $\Psi$ and $K$ be given in Lemma \ref{lem3.1}. Then for any $\varepsilon\in(0,1)$ there exist $K_0>0$ large enough such that
\begin{eqnarray}
&&\frac d{dt}\left(\int_\Omega{n_\varepsilon}\ln {n_\varepsilon}+\frac12\int_\Omega|\nabla\Psi({c_\varepsilon})|^2+K\int_\Omega|{u_\varepsilon}|^2\right)\nonumber\\
&&+\frac1{K_0}\left\{\int_\Omega|\nabla{n_\varepsilon}^{\frac{p-1}p}|^p+\int_\Omega\frac{|D^2{c_\varepsilon}|^2}{{c_\varepsilon}}+\int_\Omega\frac{|\nabla{c_\varepsilon}|^4}{c^3_\varepsilon}+\int_\Omega|\nabla {u_\varepsilon}|^2\right\}\nonumber\\
&\leq&K_0\ \ \ \  for\ all\ t\in(0,T_{max,\varepsilon}).\label{eq-lem3.2}
\end{eqnarray}
\end{lem}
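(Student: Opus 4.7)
The plan is to couple Lemma \ref{lem3.1} with the natural $L^2$ energy identity for $u_\varepsilon$, and to absorb the resulting coupling via an interpolation argument based on mass conservation (\ref{eq-n-mass-conservation}). The starting observation is essentially algebraic: since $p\ge 2$, the pointwise bound $(|\nabla n_\varepsilon|^2+\varepsilon)^{(p-2)/2}\ge |\nabla n_\varepsilon|^{p-2}$ combined with $\nabla n_\varepsilon^{(p-1)/p}=\frac{p-1}{p}n_\varepsilon^{-1/p}\nabla n_\varepsilon$ gives
\[
(|\nabla n_\varepsilon|^2+\varepsilon)^{\frac{p-2}{2}}\frac{|\nabla n_\varepsilon|^2}{n_\varepsilon}\;\ge\;\Big(\frac{p}{p-1}\Big)^p|\nabla n_\varepsilon^{(p-1)/p}|^p,
\]
so the first good term in (\ref{eq-lem3.1}) already controls a fixed multiple of $\int_\Omega |\nabla n_\varepsilon^{(p-1)/p}|^p$.

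Next I test the third equation of (\ref{model-approximate}) against $u_\varepsilon$. Since both $u_\varepsilon$ and $Y_\varepsilon u_\varepsilon$ are solenoidal and $u_\varepsilon$ vanishes on $\partial\Omega$, integration by parts eliminates both the pressure gradient and the convective term, leaving
\[
\tfrac12\frac{d}{dt}\int_\Omega |u_\varepsilon|^2 + \int_\Omega|\nabla u_\varepsilon|^2 \;=\; \int_\Omega n_\varepsilon\nabla\Phi\cdot u_\varepsilon.
\]
Multiplying this identity by $2K$ with $K$ as in Lemma \ref{lem3.1} and adding it to (\ref{eq-lem3.1}), the term $K\int_\Omega|\nabla u_\varepsilon|^2$ on the right of (\ref{eq-lem3.1}) is cancelled; a surplus $K\int_\Omega|\nabla u_\varepsilon|^2$ remains on the left, while the new source on the right is $2K\int_\Omega n_\varepsilon\nabla\Phi\cdot u_\varepsilon$.

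To close the estimate I must dominate this source. Using $\Phi\in W^{1,\infty}(\Omega)$, H\"older's inequality, the Sobolev embedding $W^{1,2}_0(\Omega)\hookrightarrow L^6(\Omega)$, and Young's inequality,
\[
\Big|2K\int_\Omega n_\varepsilon\nabla\Phi\cdot u_\varepsilon\Big|\;\le\;\tfrac{K}{2}\int_\Omega|\nabla u_\varepsilon|^2 + C\|n_\varepsilon\|_{L^{6/5}(\Omega)}^2,
\]
and the first summand is absorbed into the surplus $u_\varepsilon$-dissipation. Setting $v_\varepsilon:=n_\varepsilon^{(p-1)/p}$, mass conservation fixes $\|v_\varepsilon\|_{L^{p/(p-1)}}$ to a constant, and a three-dimensional Gagliardo--Nirenberg inequality then yields
\[
\|n_\varepsilon\|_{L^{6/5}}^2 \;=\; \|v_\varepsilon\|_{L^{6p/(5(p-1))}}^{2p/(p-1)} \;\le\; C\,\|\nabla v_\varepsilon\|_{L^p}^{p/(4p-6)} + C.
\]
Since $p\ge 2$ forces $p/(4p-6)<p$, a last Young's inequality subsumes this into a small fraction of $\int_\Omega |\nabla n_\varepsilon^{(p-1)/p}|^p$ plus a fixed additive constant, producing (\ref{eq-lem3.2}) with $K_0$ taken sufficiently large.

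The decisive point is this final interpolation: its success hinges on the Gagliardo--Nirenberg exponent $p/(4p-6)$ being strictly subcritical with respect to the $p$-Laplacian dissipation, which reduces to the elementary inequality $p>7/4$. Thus $p\ge 2$ is exactly what is needed here, while the stronger restriction $p>\frac{32}{15}$ of the main theorem plays no role at this stage and enters only in later sections, when higher integrability of $\nabla n_\varepsilon$ must be recovered via a bootstrap argument.
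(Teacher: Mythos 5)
Your argument is correct and essentially identical to the paper's: testing the $u_\varepsilon$-equation to obtain the $L^2$-energy identity, adding $2K$ times that identity to Lemma \ref{lem3.1}, bounding the forcing via H\"older/Sobolev/Young against $\|n_\varepsilon\|_{L^{6/5}}^2$, and absorbing that term through the Gagliardo--Nirenberg interpolation on $v_\varepsilon=n_\varepsilon^{(p-1)/p}$ with $L^{p/(p-1)}$-norm controlled by mass conservation; your final exponent $p/(4p-6)$ is exactly the paper's $\tfrac{2p}{p-1}\theta$ with $\theta=\tfrac{p-1}{4(2p-3)}$, and the subcriticality threshold $p>7/4$ is the same.
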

\begin{proof}
Testing the third equation of (\ref{model-approximate}) by ${u_\varepsilon}$ and integrating by parts over $\Omega$, we have
$$\frac12\frac d{dt}\int_\Omega|{u_\varepsilon}|^2+\int_\Omega|\nabla {u_\varepsilon}|^2=-\int_\Omega({y_\varepsilon}{u_\varepsilon}\cdot\nabla){u_\varepsilon}\cdot{u_\varepsilon}+\int_\Omega{n_\varepsilon}\nabla\Phi\cdot{u_\varepsilon}
\ \ \ \ {\rm for\ all}\ t\in(0,T_{max,\varepsilon}).$$
Since $\nabla\cdot{u_\varepsilon}=0$ implies $\nabla\cdot{Y_\varepsilon}{u_\varepsilon}=0$, we thereby obtain (see also \cite[Lemma 3.5]{Winkler-AIHP-2016})
\begin{eqnarray}
\frac12\frac d{dt}\int_\Omega|{u_\varepsilon}|^2+\int_\Omega|\nabla {u_\varepsilon}|^2=\int_\Omega{n_\varepsilon}\nabla\Phi\cdot{u_\varepsilon}
\ \ \ \ {\rm for\ all}\ t\in(0,T_{max,\varepsilon}).\nonumber
\end{eqnarray}
Substituting this into (\ref{eq-lem3.1}), we have
\begin{eqnarray}
&&\frac{d}{dt}\left(\int_\Omega{n_\varepsilon}\ln {n_\varepsilon}+\frac12\int_\Omega|\nabla\Psi({c_\varepsilon})|^2+K\int_\Omega|{u_\varepsilon}|^2\right)\nonumber\\
&&+\frac1K\left\{\int_\Omega(|\nabla{n_\varepsilon}|^2+\varepsilon)^\frac{p-2}2\frac{|\nabla{n_\varepsilon}|^2}{{n_\varepsilon}}+\int_\Omega\frac{|D^2{c_\varepsilon}|^2}{{c_\varepsilon}}+\int_\Omega\frac{|\nabla{c_\varepsilon}|^4}{c^3_\varepsilon}\right\}+K\int_\Omega|\nabla{u_\varepsilon}|^2\nonumber\\
&\leq& 2K\int_\Omega {n_\varepsilon}\nabla\Phi\cdot{u_\varepsilon}\ \ \ \ {\rm for\ all}\ t\in(0,T_{max,\varepsilon}).\label{eq-proof-lem3.2-eq1}
\end{eqnarray}
Since $p\geq2$, we have for each $\varepsilon\in(0,1)$
\be
\int_\Omega(|\nabla{n_\varepsilon}|^2+\varepsilon)^\frac{p-2}2\frac{|\nabla{n_\varepsilon}|^2}{{n_\varepsilon}}\geq\int_\Omega\frac{|\nabla{n_\varepsilon}|^p}{{n_\varepsilon}}
=(\frac{p}{p-1})^p\int_\Omega|\nabla{n_\varepsilon}^{\frac{p-1}p}|^p\ \ \ \ {\rm for\ all}\ t\in(0,T_{max,\varepsilon}).\label{eq-proof-lem3.2-eq2}
\ee
Using H${\rm{\ddot{o}}}$lder's inequality, (\ref{eq-chi-f-phi}), the Sobolev embedding $W^{1,2}(\Omega)\hookrightarrow L^6(\Omega)$ and the
standard Poincar${\rm{\acute{e}}}$ inequality, one can find a constant $C_1>0$ such that
\begin{eqnarray}
2K\int_\Omega {n_\varepsilon}\nabla\Phi\cdot{u_\varepsilon}&\leq&2K\|\nabla\Phi\|_{\infty}\|{n_\varepsilon}\|_{\frac65}\|{u_\varepsilon}\|_6\nonumber\\
&\leq&\frac K2\|\nabla {u_\varepsilon}\|_2^2+C_1\|{n_\varepsilon}\|_{\frac65}^2\ \ \ \ {\rm for\ all}\ t\in(0,T_{max,\varepsilon}).\label{eq-proof-lem3.2-eq3}
\end{eqnarray}
Let $\theta:=\frac{p-1}{4(2p-3)}\in(0,1)$, then $\theta$ satisfies $\frac{5(p-1)}{6p}=\theta(\frac1p-\frac13)+(1-\theta)\frac{p-1}p$. An application of the Gagliardo-Nirenberg inequality shows that
\begin{eqnarray}
\|{n_\varepsilon}\|_{\frac65}^2&=&\|{n_\varepsilon}^{\frac{p-1}p}\|_{\frac{6p}{5(p-1)}}^{\frac{2p}{p-1}}\nonumber\\
&\leq&C_2\|\nabla n_\varepsilon^{\frac{p-1}p}\|_p^{\frac{2p}{p-1}\theta}\|n_\varepsilon^{\frac{p-1}p}\|_{\frac p{p-1}}^{\frac{2p}{p-1}(1-\theta)}+C_2\|n_\varepsilon^{\frac{p-1}p}\|^\frac{2p}{p-1}_{\frac p{p-1}}\nonumber\\
&=&C_2\|\nabla n_\varepsilon^{\frac{p-1}p}\|_p^{\frac{2p}{p-1}\theta}\|n_0\|_1^{2{(1-\theta)}}+C_2\|n_0\|_1^2\ \ \ \ {\rm for\ all}\ t\in(0,T_{max,\varepsilon})\label{eq-proof-lem3.2-eq5}
\end{eqnarray}
with some $C_2>0$. Since ${\frac{2p}{p-1}\theta}<p$ due to our assumption $p\geq2>\frac74$, we use Young's inequality together with (\ref{eq-proof-lem3.2-eq5}) to estimate
\begin{eqnarray}
\|{n_\varepsilon}\|_{\frac65}^2\leq\delta \|\nabla{n_\varepsilon}^{\frac{p-1}p}\|_p^p+ C_3C_\delta\ \ \ \ {\rm for\ all}\ t\in(0,T_{max,\varepsilon})\label{eq-proof-lem3.2-eq6}
\end{eqnarray}
with some $C_3>0$ and some $\delta>0$ to be fixed later. Combining (\ref{eq-proof-lem3.2-eq1})-(\ref{eq-proof-lem3.2-eq6}) with $\delta=\frac1{2KC_1}(\frac{p}{p-1})^p$, we arrive at (\ref{eq-lem3.2}).
\end{proof}

We can thereby establish the following consequences of Lemma \ref{lem3.2}.
\begin{lem} \label{lem3.3}
Assume that $p\geq2$. Let (\ref{eq-chi-f-phi}), (\ref{eq-chi-f}) hold, and $\Psi$ and $K$ be given in Lemma \ref{lem3.1}. Then there exists $C\geq 0$ such that for any
$\varepsilon\in(0,1)$ we have
\begin{eqnarray}
\int_\Omega{n_\varepsilon}\ln {n_\varepsilon}+\frac12\int_\Omega|\nabla\psi({c_\varepsilon})|^2+K\int_\Omega|{u_\varepsilon}|^2&\leq& C\ \ \ \  for\ all\ t\in(0,T_{max,\varepsilon})\label{eq-lem3.3-1}
\end{eqnarray}
and
\begin{eqnarray}
\int^T_0\int_\Omega|\nabla{n_\varepsilon}^{\frac{p-1}p}|^p+\int^T_0\int_\Omega\frac{|D^2{c_\varepsilon}|^2}{{c_\varepsilon}}
+\int^T_0\int_\Omega\frac{|\nabla{c_\varepsilon}|^4}{c^3_\varepsilon}+\int^T_0\int_\Omega|\nabla{u_\varepsilon}|^2&\leq& C(T+1)\label{eq-lem3.3-2}
\end{eqnarray}
for all $t\in(0,T_{max,\varepsilon}).$
\end{lem}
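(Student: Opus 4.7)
The plan is to extract both estimates from the differential inequality (\ref{eq-lem3.2}) via a single time integration together with a Grönwall-type absorption. Write
$$y_\varepsilon(t) := \int_\Omega n_\varepsilon \ln n_\varepsilon + \tfrac12 \int_\Omega |\nabla \Psi(c_\varepsilon)|^2 + K \int_\Omega |u_\varepsilon|^2,$$
and let $D_\varepsilon(t)$ denote the sum of the four dissipation integrals in the braces in (\ref{eq-lem3.2}), so that (\ref{eq-lem3.2}) becomes $y_\varepsilon'(t) + \frac{1}{K_0} D_\varepsilon(t) \leq K_0$. I would first check that $y_\varepsilon(0)$ is bounded uniformly in $\varepsilon$: this follows from (\ref{approximate-initialdata-n})-(\ref{approximate-initialdata-u}) once one notes that concavity of $g := f/\chi$ with $g(0) = 0$ forces $g(c) \geq (g(s_0)/s_0) c$ on $[0, s_0]$, so $|\nabla \Psi(c_{0\varepsilon})|^2 = |\nabla c_{0\varepsilon}|^2/g(c_{0\varepsilon}) \leq C |\nabla \sqrt{c_{0\varepsilon}}|^2$; one also has $y_\varepsilon(t) \geq -|\Omega|/e$ from $s \ln s \geq -1/e$. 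Then (\ref{eq-lem3.3-2}) follows by direct integration: $y_\varepsilon(T) + \frac{1}{K_0}\int_0^T D_\varepsilon \leq y_\varepsilon(0) + K_0 T \leq C(T+1)$, and transferring $y_\varepsilon(T)$ to the right and using its lower bound gives the claim.

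For the harder estimate (\ref{eq-lem3.3-1}), the strategy is to prove a pointwise absorption inequality $y_\varepsilon(t) \leq \tfrac12 D_\varepsilon(t) + C_0$ uniformly in $t,\varepsilon$. Substituting into (\ref{eq-lem3.2}) yields $y_\varepsilon' + \alpha y_\varepsilon \leq C_1$ with $\alpha = 1/(2K_0) > 0$, so Grönwall's lemma forces $y_\varepsilon(t) \leq \max\{y_\varepsilon(0), C_1/\alpha\}$. The three ingredients are: the Poincaré inequality, giving $K\int |u_\varepsilon|^2 \leq KC_P \int |\nabla u_\varepsilon|^2$ via $u_\varepsilon|_{\partial\Omega} = 0$; the concavity bound $g(c_\varepsilon) \geq \gamma c_\varepsilon$ combined with Cauchy-Schwarz (using $\int c_\varepsilon \leq s_0 |\Omega|$ from (\ref{eq-c-maxmum-principle-s0})) and Young, yielding $\tfrac12\int |\nabla \Psi(c_\varepsilon)|^2 \leq C\bigl(\int |\nabla c_\varepsilon|^4/c_\varepsilon^3\bigr)^{1/2} \leq \delta \int |\nabla c_\varepsilon|^4/c_\varepsilon^3 + C_\delta$; and the elementary entropy estimate $\int n_\varepsilon \ln n_\varepsilon \leq \tfrac{1}{\eta}\int n_\varepsilon^{1+\eta}$, valid for any $\eta > 0$, which we then absorb into $\int |\nabla n_\varepsilon^{(p-1)/p}|^p$ via a Gagliardo-Nirenberg interpolation on $v_\varepsilon := n_\varepsilon^{(p-1)/p}$ anchored by the conserved quantity $\|v_\varepsilon\|_{p/(p-1)}^{p/(p-1)} = \|n_0\|_1$, exactly in the spirit of (\ref{eq-proof-lem3.2-eq5})-(\ref{eq-proof-lem3.2-eq6}).

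The main obstacle is the third ingredient. The Gagliardo-Nirenberg exponent produced by interpolating between $W^{1,p}$ and $L^{p/(p-1)}$ at the target $L^{(1+\eta)p/(p-1)}$ turns out to be $r\theta = \frac{3p\eta}{2(2p-3)}$, and Young's inequality can absorb $\|\nabla v_\varepsilon\|_p^{r\theta}$ into $\|\nabla v_\varepsilon\|_p^p$ only when $r\theta < p$, i.e., $\eta < \frac{2(2p-3)}{3}$. This admissible range is nonempty precisely when $p > 3/2$, so the standing hypothesis $p \geq 2$ of Section 3 amply suffices; choosing $\eta$ in this range, and then $\delta$ small enough in the two absorptions above, assembles the pointwise bound $y_\varepsilon \leq \tfrac12 D_\varepsilon + C_0$, and the Grönwall step closes the argument.
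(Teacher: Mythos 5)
Your proof is essentially the same as the paper's: both dominate the energy $y_\varepsilon$ pointwise by the dissipation $h_\varepsilon$ (plus a constant) using exactly the same three ingredients --- Poincar\'e for $u_\varepsilon$, the linear lower bound $g(s)\geq\gamma s$ on $[0,s_0]$ plus Young for the $\nabla\Psi(c_\varepsilon)$ term, and a Gagliardo--Nirenberg interpolation of $n_\varepsilon^{(p-1)/p}$ between $W^{1,p}$ and $L^{p/(p-1)}$ anchored at the conserved mass for the entropy term --- and then close with an ODI/Gr\"onwall argument (the paper does this simultaneously for both (\ref{eq-lem3.3-1}) and (\ref{eq-lem3.3-2}) via the single inequality (\ref{eq-proof-lem3.3-eq4}), and uses the already-computed exponent $6/5$ in place of your free parameter $\eta$, but these are cosmetic differences). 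One small slip: the specific factor $\tfrac12$ in ``$y_\varepsilon\leq\tfrac12 D_\varepsilon+C_0$'' cannot be forced, since the Poincar\'e constant in $K\int_\Omega|u_\varepsilon|^2\leq KC_P\int_\Omega|\nabla u_\varepsilon|^2$ is fixed and may well exceed $\tfrac12$; the correct statement is $y_\varepsilon\leq C_9 h_\varepsilon+C_9$ for some constant $C_9$, as the paper records, and then the resulting ODI has damping coefficient $\alpha=1/(C_9K_0)$ rather than your $1/(2K_0)$ --- but any $\alpha>0$ makes Gr\"onwall work, so the argument goes through unchanged.
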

\begin{proof}
Set
\begin{eqnarray*}
{y_\varepsilon}(t):=\int_\Omega\left\{{n_\varepsilon}\ln {n_\varepsilon}+\frac12\int_\Omega|\nabla\Psi({c_\varepsilon})|^2+K\int_\Omega|{u_\varepsilon}|^2\right\}(\cdot,t)\ \ \ \ {\rm for\ all}\ t\in(0,T_{max,\varepsilon})
\end{eqnarray*}
and
\begin{eqnarray*}
h_\varepsilon(t):=\int_\Omega\left\{|\nabla{n_\varepsilon}^{\frac{p-1}p}|^p+\int_\Omega\frac{|D^2{c_\varepsilon}|^2}{{c_\varepsilon}}+\int_\Omega\frac{|\nabla{c_\varepsilon}|^4}{c^3_\varepsilon}+\int_\Omega|\nabla{u_\varepsilon}|^2\right\}(\cdot,t)
\end{eqnarray*}
for all $t\in(0,T_{max,\varepsilon})$. Then (\ref{lem3.2}) implies that
\be
{y_\varepsilon}'(t)+\frac1{K_0}h_\varepsilon(t)\leq K_0\ \ \ \ {\rm for\ all}\ t\in(0,T_{max,\varepsilon}).\label{eq-proof-lem3.3-eq1}
\ee
In order to introduce dissipative term in (\ref{eq-proof-lem3.3-eq1}), we show that ${y_\varepsilon}(t)$ is dominated by $h_\varepsilon(t)$.
We first use the elementary inequality $z\ln z\leq2 z^{\frac65}$ for all $z\geq0$.
And invoking the Young's inequality together with (\ref{eq-proof-lem3.2-eq6}) (pick $\delta=1$), we infer that
\be
\int_\Omega{n_\varepsilon}\ln {n_\varepsilon}\leq 2\int_\Omega{n_\varepsilon}^\frac65=2\|{n_\varepsilon}\|_{\frac65}^{\frac65}\leq\|{n_\varepsilon}\|_{\frac65}^2+2^{\frac{5}2}
\leq\|\nabla{n_\varepsilon}^{\frac{p-1}p}\|_p^p+ C_7\label{eq-proof-lem3.3-eq2}
\ee
with some $C_7>0$ for all $t\in(0,T_{max,\varepsilon})$. From the standard Poincar${\rm{\acute{e}}}$ inequality, there exists $C_8>0$ such that
\be
K\int_\Omega|{u_\varepsilon}|^2\leq C_8\int_\Omega|\nabla{u_\varepsilon}|^2\ \ \ \ {\rm for\ all}\ t\in(0,T_{max,\varepsilon}).\label{eq-proof-lem3.3-eq3}
\ee
Recalling the definitions of $\Psi$ and $g$ in Lemma \ref{lem3.1}, using (\ref{eq-chi-f-phi}), we have
$$
g(s):=\frac{f(s)}{\chi(s)}\in C^1([0,\infty))\ \ \ {\rm{and}}\ \ \ g(0)=0.
$$
Hence there exist two constants $C_g^->0$ and $C_g^+>0$ such that $C_g^-s\leq g(s)\leq C_g^+s$, which yields

\begin{eqnarray}
\frac12\int_\Omega|\nabla\Psi({c_\varepsilon})|^2&=&\frac12\int_\Omega\frac{|\nabla{c_\varepsilon}|^2}{g({c_\varepsilon})}\nonumber\\
&\leq&\int_\Omega\frac{|\nabla{c_\varepsilon}|^4}{{c_\varepsilon}^3}+\frac1{16}\int_\Omega\frac{{c_\varepsilon}^3}{g^2({c_\varepsilon})}\nonumber\\
&\leq&\int_\Omega\frac{|\nabla{c_\varepsilon}|^4}{{c_\varepsilon}^3}+\frac1{16(C^-_g)^2}\int_\Omega{c_\varepsilon}\nonumber\\
&\leq&\int_\Omega\frac{|\nabla{c_\varepsilon}|^4}{{c_\varepsilon}^3}+\frac{s_0|\Omega|}{16(C^-_g)^2}\ \ \ \ {\rm for\ all}\ t\in(0,T_{max,\varepsilon}).
\end{eqnarray}
In conjunction with (\ref{eq-proof-lem3.3-eq2}) and (\ref{eq-proof-lem3.3-eq3}), we obtain
\be
{y_\varepsilon}(t)\leq C_9 h_\varepsilon(t)+C_9\ \ \ \ {\rm for\ all}\ t\in(0,T_{max,\varepsilon})\nonumber
\ee
with $C_9:=\max\{1,C_8,C_7+\frac{s_0|\Omega|}{16(C^-_g)^2}\}$, which together with (\ref{eq-proof-lem3.3-eq1}) imply that ${y_\varepsilon}$ satisfies the ODI
\begin{eqnarray}
{y_\varepsilon}'(t)+\frac1{2K_0}h_\varepsilon(t)+\frac1{2C_9K_0}{y_\varepsilon}(t)\leq K_0+\frac{1}{2K_0}:=C_{10}\ \ \ \ {\rm for\ all}\ t\in(0,T_{max,\varepsilon}).\label{eq-proof-lem3.3-eq4}
\end{eqnarray}
This firstly shows that
\begin{eqnarray}
y_\varepsilon(t)\leq \max\{\sup_{\varepsilon\in(0,1)}y_\varepsilon(0),2C_9K_0C_{10}\}\ \ \ \ {\rm for\ all}\ t\in(0,T_{max,\varepsilon}).\nonumber
\end{eqnarray}
In view of (\ref{approximate-initialdata-n})-(\ref{approximate-initialdata-u}) and \cite[Lemma 3.7]{Winkler-AIHP-2016}, we obtain (\ref{eq-lem3.3-1}).
Secondly, another integration of (\ref{eq-proof-lem3.3-eq4}) proves (\ref{eq-lem3.3-2}).
\end{proof}
\section{Global existence for the regularized problem (\ref{model-approximate})}\label{sectionfour}
Now we are in the position to show that the solution of  the approximate problem (\ref{model-approximate}) is actually global in time. The idea of the
proof is based on the argument in \cite[Lemma 3.9]{Winkler-AIHP-2016} (see also\cite[Section 4]{Zhang&Li-JDE-2015}) for the linear case $p=2$. Throughout this section,
all the constants below possibly depending on $\varepsilon$.
\begin{lem} \label{lem4.1}
Assume that $p\geq2$. For each $\varepsilon\in(0,1)$, the solutions of {\rm{(\ref{model-approximate})}} are global in time.
\end{lem}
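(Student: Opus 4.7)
The plan is to invoke the extensibility criterion of Lemma \ref{lem2.1}: it suffices to show that if we assumed $T_{\max,\varepsilon}<\infty$, then for some $q>3$ and some $\alpha>3/4$ the quantity
$$M(t):=\|n_\varepsilon(\cdot,t)\|_{L^\infty(\Omega)}+\|c_\varepsilon(\cdot,t)\|_{W^{1,q}(\Omega)}+\|A^\alpha u_\varepsilon(\cdot,t)\|_{L^2(\Omega)}$$
would stay bounded on $[0,T_{\max,\varepsilon})$, which gives a contradiction. Since $\varepsilon\in(0,1)$ is fixed, all constants may depend on $\varepsilon$, and the decisive quantitative fact is $sF_\varepsilon'(s)\leq 1/\varepsilon$ from (\ref{eq-F-eq2}), which turns the chemotactic drift into an essentially bounded perturbation as soon as $\nabla c_\varepsilon$ is controlled. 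The proof is a bootstrap built upon the a priori estimates of Lemma \ref{lem3.3} and mirrors \cite[Lemma 3.9]{Winkler-AIHP-2016} and \cite[Section 4]{Zhang&Li-JDE-2015}.

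\emph{Step 1 ($L^q$-bound for $n_\varepsilon$).} For arbitrary $q\geq 2$, I would test the first equation of (\ref{model-approximate}) against $q n_\varepsilon^{q-1}$; since $\nabla\cdot u_\varepsilon=0$ the convective term drops out, yielding
$$\frac{d}{dt}\int_\Omega n_\varepsilon^q + q(q-1)\int_\Omega (|\nabla n_\varepsilon|^2+\varepsilon)^{\frac{p-2}{2}} n_\varepsilon^{q-2}|\nabla n_\varepsilon|^2 = q(q-1)\int_\Omega n_\varepsilon^{q-1}F_\varepsilon'(n_\varepsilon)\chi(c_\varepsilon)\nabla n_\varepsilon\cdot\nabla c_\varepsilon.$$
Using $n_\varepsilon F_\varepsilon'(n_\varepsilon)\leq 1/\varepsilon$, $\|c_\varepsilon\|_{L^\infty}\leq s_0$ from (\ref{eq-c-maxmum-principle-s0}), and Young's inequality, the right-hand side is absorbed into the dissipation (which since $p\geq 2$ is bounded below by $\varepsilon^{(p-2)/2}\int_\Omega n_\varepsilon^{q-2}|\nabla n_\varepsilon|^2$) up to a remainder of the form $C(\varepsilon,q)\int_\Omega n_\varepsilon^{q-2}|\nabla c_\varepsilon|^2$. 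Combining $\|c_\varepsilon\|_{L^\infty}\leq s_0$ with the space-time bound $\int_0^T\!\int_\Omega|\nabla c_\varepsilon|^4/c_\varepsilon^3\leq C(T+1)$ from (\ref{eq-lem3.3-2}) yields $\|\nabla c_\varepsilon\|_{L^2((0,T)\times\Omega)}\leq C$, and a Gr\"onwall argument then gives $\|n_\varepsilon(\cdot,t)\|_{L^q(\Omega)}\leq C(\varepsilon,q,T_{\max,\varepsilon})$ for every finite $q$.

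\emph{Step 2 (Stokes regularity).} Fixing $q>3$, one has $n_\varepsilon\nabla\Phi$ bounded in $L^\infty((0,T_{\max,\varepsilon});L^q(\Omega))$. Writing the Navier--Stokes component in its mild form
$$u_\varepsilon(t)=e^{-tA}u_{0\varepsilon}+\int_0^t e^{-(t-s)A}\mathcal{P}\bigl[n_\varepsilon\nabla\Phi - (Y_\varepsilon u_\varepsilon\cdot\nabla)u_\varepsilon\bigr](s)\,ds$$
and exploiting the smoothing estimate $\|A^\alpha e^{-tA}\|_{\mathcal{L}(L^2_\sigma)}\leq Ct^{-\alpha}$ together with the fact that the Yosida approximation $Y_\varepsilon$ gains a derivative (so $\|(Y_\varepsilon u_\varepsilon\cdot\nabla)u_\varepsilon\|_{L^2}$ is controlled by $\|u_\varepsilon\|_{L^2}$ and $\|A^\alpha u_\varepsilon\|_{L^2}$ with an $\varepsilon$-dependent constant), a standard contraction/continuation scheme produces $\|A^\alpha u_\varepsilon(\cdot,t)\|_{L^2(\Omega)}\leq C$ for some $\alpha\in(3/4,1)$. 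In particular, the Sobolev embedding $D(A^\alpha)\hookrightarrow L^\infty(\Omega)$ gives $\|u_\varepsilon(\cdot,t)\|_{L^\infty(\Omega)}\leq C$.

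\emph{Step 3 (conclusion).} Applying $L^r$--$L^q$ Neumann heat-semigroup estimates to the mild form of the $c_\varepsilon$-equation, whose source $-u_\varepsilon\cdot\nabla c_\varepsilon-F_\varepsilon(n_\varepsilon)f(c_\varepsilon)$ is now bounded in $L^\infty_t L^q_x$, upgrades $c_\varepsilon$ to $L^\infty_t W^{1,q}_x$. With $\chi(c_\varepsilon)\nabla c_\varepsilon$ thus bounded in $L^\infty_t L^q_x$, $u_\varepsilon\in L^\infty_t L^\infty_x$, and the regularized diffusion uniformly elliptic (since $(|\nabla n_\varepsilon|^2+\varepsilon)^{(p-2)/2}\geq\varepsilon^{(p-2)/2}$), a Moser iteration on the $n_\varepsilon$-equation produces $\|n_\varepsilon(\cdot,t)\|_{L^\infty(\Omega)}\leq C$. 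All three components of $M(t)$ are therefore bounded on $[0,T_{\max,\varepsilon})$, contradicting the criterion of Lemma \ref{lem2.1}, so $T_{\max,\varepsilon}=\infty$. The principal obstacle is the simultaneous coupling in Step 2, but since the $\varepsilon$-regularizations tame the strongest nonlinearities, the $\varepsilon=$fixed scheme of \cite[Lemma 3.9]{Winkler-AIHP-2016} carries through with only notational changes.
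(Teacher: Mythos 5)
The overall architecture of your proof matches the paper's: assume $T_{\max,\varepsilon}<\infty$, establish $L^q$ bounds on $n_\varepsilon$ for some finite $q>3$ via testing against $n_\varepsilon^{q-1}$ (using $sF_\varepsilon'(s)\leq 1/\varepsilon$ to tame the drift), bootstrap through the Stokes equation to get $u_\varepsilon\in L^\infty$, use heat-semigroup smoothing for $c_\varepsilon$, then close with a Moser iteration on $n_\varepsilon$ to contradict the extensibility criterion of Lemma~\ref{lem2.1}. This is exactly what the paper does, citing \cite[Lemma~3.9]{Winkler-AIHP-2016} for the middle steps and finishing with the Moser--Alikakos iteration.

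However, Step~1 as written contains an overclaim: from the differential inequality
\begin{equation*}
\frac{d}{dt}\int_\Omega n_\varepsilon^{q}+c_0\int_\Omega n_\varepsilon^{q-2}|\nabla n_\varepsilon|^2\leq C(\varepsilon,q)\int_\Omega n_\varepsilon^{q-2}|\nabla c_\varepsilon|^2
\end{equation*}
together with only the space-time bound $\int_0^T\!\!\int_\Omega|\nabla c_\varepsilon|^4\leq C(T+1)$ and the $L^1$ mass bound on $n_\varepsilon$, a Gr\"onwall argument does \emph{not} deliver $\|n_\varepsilon(\cdot,t)\|_{L^q}\leq C$ for arbitrary finite $q$. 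If you split $n_\varepsilon^{q-2}|\nabla c_\varepsilon|^2\leq \delta\, n_\varepsilon^{q}+C_\delta|\nabla c_\varepsilon|^{q}$ by Young, you need an a priori $L^q$-in-space-time bound on $\nabla c_\varepsilon$ which you only have for $q\leq 4$; if instead you interpolate $\|n_\varepsilon\|_{L^{2(q-2)}}$ against the dissipation $\int|\nabla n_\varepsilon^{q/2}|^2$ and the $L^1$ mass via Gagliardo--Nirenberg, the resulting time power on $\|\nabla c_\varepsilon\|_{L^4}$ exceeds $4$ once $q>14/3$, so again the argument caps out at a finite $q$. This is precisely why the paper first obtains $\int_\Omega n_\varepsilon^\beta\leq C$ only for $\beta<8(1-\tfrac1p)$, then raises $c_\varepsilon$ to $W^{1,\infty}$, and \emph{only then} gets $L^\beta$ bounds for arbitrary $\beta$ as a prelude to Moser iteration. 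Your Steps~2 and~3 perform essentially that bootstrap, so the fix is trivial: restate Step~1 as giving an $L^q$ bound for some fixed $q\in(3,4]$ (which is all that's needed to drive Steps~2 and~3), and have the full scale of $L^\beta$ bounds emerge from the $c_\varepsilon$-regularity obtained in Step~3. With that correction the proof is sound and follows the same route as the paper.
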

\begin{proof} Assume that $T_{max,\varepsilon}<\infty$ for some $\varepsilon\in(0,1)$. As an particular consequence of
Lemma \ref{lem3.3} and (\ref{eq-c-maxmum-principle-s0}) we can find $C_1>0$ and $C_2>0$ such that
\be
\int^{T_{max,\varepsilon}}_0\int_\Omega|\nabla{c_\varepsilon}(\cdot,t)|^4\leq C_1\ \ \ {\rm{and}}\ \ \ \int_\Omega|{u_\varepsilon}(\cdot,t)|^2\leq C_2
\ \ \ \ {\rm for\ all}\ t\in(0,T_{max,\varepsilon}).\label{eq-p-lem4.1-eq1}
\ee

We first multiply the first equation in (\ref{model-approximate}) by $\beta{n_\varepsilon}^{\beta-1}$ with $\beta\in(1,8(1-\frac1p))$ and using integration by parts, we have
\begin{eqnarray}
&&\frac d{dt}\int_\Omega{n_\varepsilon}^\beta +\beta(\beta-1)\int_\Omega(|\nabla{n_\varepsilon}|^2+\varepsilon)^{\frac{p-2}2}{n_\varepsilon}^{\beta-2}|\nabla{n_\varepsilon}|^2\nonumber\\
&=&\beta(\beta-1)\int_\Omega {n_\varepsilon}{F_\varepsilon}'({n_\varepsilon})\chi({c_\varepsilon})\nabla {c_\varepsilon}{n_\varepsilon}^{\beta-2}\nabla{n_\varepsilon}\nonumber
\end{eqnarray}
for all $t\in(0,T_{max,\varepsilon})$. A combination of (\ref{eq-F-eq2}) and Young's inequality shows that
\begin{eqnarray}
&&\frac d{dt}\int_\Omega{n_\varepsilon}^\beta +\beta(\beta-1)\int_\Omega(|\nabla{n_\varepsilon}|^2+\varepsilon)^{\frac{p-2}2}{n_\varepsilon}^{\beta-2}|\nabla{n_\varepsilon}|^2\nonumber\\
&\leq& \frac{\beta(\beta-1)}\varepsilon \max_{s\in[0,s_0]}\chi(s)\int_\Omega n_\varepsilon^{\beta-2}|\nabla{c_\varepsilon}||\nabla{n_\varepsilon}|\nonumber\\
&\leq&\int_\Omega{n_\varepsilon}^{\beta-2}\left(\beta(\beta-1)|\nabla{n_\varepsilon}|^p+C_3|\nabla{c_\varepsilon}|^{\frac p{p-1}}\right)\nonumber
\end{eqnarray}
for all $t\in(0,T_{max,\varepsilon})$ with some $C_3>0$. Obviously, $\int_\Omega{n_\varepsilon}^{\beta-2}|\nabla{n_\varepsilon}|^p\leq\int_\Omega(|\nabla{n_\varepsilon}|^2+\varepsilon)^{\frac{p-2}2}{n_\varepsilon}^{\beta-2}|\nabla{n_\varepsilon}|^2$, which
implies that
\begin{eqnarray}
\frac d{dt}\int_\Omega{n_\varepsilon}^\beta +\beta(\beta-1)\int_\Omega{n_\varepsilon}^{\beta-2}|\nabla{n_\varepsilon}|^p
\leq \int_\Omega{n_\varepsilon}^{\beta-2}\left(\beta(\beta-1)|\nabla{n_\varepsilon}|^p+C_3|\nabla{c_\varepsilon}|^{\frac p{p-1}}\right)\nonumber
\end{eqnarray}
for all $t\in(0,T_{max,\varepsilon})$. Observing that $p>2$ implies $\frac p{p-1}<4$, we obtain there exist $C_4,C_5>0$ such that

\begin{eqnarray}
\frac d{dt}\int_\Omega{n_\varepsilon}^\beta&\leq&C_3\int_\Omega{n_\varepsilon}^{\beta-2}|\nabla{c_\varepsilon}|^{\frac p{p-1}}\nonumber\\
&\leq&C_3\int_\Omega\left(|\nabla{c_\varepsilon}|^4+C_4{n_\varepsilon}^{(\beta-2)\frac{4(p-1)}{3p-4}}\right)\nonumber\\
&\leq&C_3\int_\Omega|\nabla{c_\varepsilon}|^4+C_5\int_\Omega({n_\varepsilon}^\beta+|\Omega|)\ \ \ \ {\rm for\ all}\ t\in(0,T_{max,\varepsilon}),\label{eq-p-lem4.1-eq2}
\end{eqnarray}

since $\beta\leq 8(1-\frac1p)$ ensures that ${(\beta-2)\frac{4(p-1)}{3p-4}}\leq\beta$.
Integrating (\ref{eq-p-lem4.1-eq2}) and using (\ref{eq-p-lem4.1-eq1}) yields $C_6>0$ such that
\begin{eqnarray}
\int_\Omega{n_\varepsilon}^\beta\leq C_6\ \ \ \ {\rm for\ all}\ t\in(0,T_{max,\varepsilon}),\label{eq-p-lem4.1-eq3}
\end{eqnarray}
where $\beta\in(1,8(1-\frac1p))$.

We next use (\ref{eq-p-lem4.1-eq3}) to estimate $\|{c_\varepsilon}\|_{W^{1,\infty}}(\Omega)$. Obviously, $p>2$ implies $8(1-\frac1p)>4$, thus as a particular consequence of (\ref{eq-p-lem4.1-eq3}), we have
\begin{eqnarray}
\int_\Omega{n_\varepsilon}^4\leq C_6\ \ \ \ {\rm for\ all}\ t\in(0,T_{max,\varepsilon}),\label{eq-jfo-zzh-n-4-norm}
\end{eqnarray}
which is enough to obtain
\begin{eqnarray}
\|{u_\varepsilon}\|_{L^\infty(\Omega)}\leq C_7\ \ \ \ {\rm for\ all}\ t\in(0,T_{max,\varepsilon})\label{eq-jfo-zzh-u-infty-norm}
\end{eqnarray}
and
\begin{eqnarray}
\|\nabla{c_\varepsilon}\|_{L^4(\Omega)}\leq C_8\ \ \ \ {\rm for\ all}\ t\in(0,T_{max,\varepsilon})\label{eq-jfo-nabla-zzh-c-4-norm}
\end{eqnarray} with some $C_7>0$ and $C_8>0$. Since the proof of (\ref{eq-jfo-zzh-u-infty-norm}) and (\ref{eq-jfo-nabla-zzh-c-4-norm})
is exactly the same as in \cite[Lemma 3.9]{Winkler-AIHP-2016}, we refer the reader to it for more details. Notice that the second equation of (\ref{model-approximate}) is equivalent to
$$\partial_t {c_\varepsilon}+{u_\varepsilon}\cdot\nabla {c_\varepsilon}=(\Delta-1) {c_\varepsilon}+c_\varepsilon-{F_\varepsilon}({n_\varepsilon})f({c_\varepsilon})$$
and then we rewrite ${c_\varepsilon}$ by the variation-of-constant formula
\begin{eqnarray}
{c_\varepsilon}(t)=e^{t(\Delta-1)}c_{0,\varepsilon}+\int^t_0 e^{(t-s)(\Delta-1)}({c_\varepsilon}-{F_\varepsilon}({n_\varepsilon})f({c_\varepsilon})-{u_\varepsilon}\cdot\nabla{c_\varepsilon})(s)ds\nonumber
\end{eqnarray}
for all $t\in(0,T_{max,\varepsilon})$. %Picking $\theta\in(\frac78,1)$. Let $A_p$ stand for the sectorial operator $(-\Delta+1)$ in $L^p(\Omega)$ with homogeneous Neumann boundary condition and
%$$D(A_p)=\left\{u\in W^{2,p}(\Omega);\frac{\partial u}{\partial \nu}=0\ \ {\rm{on}}\ \partial \Omega\right\}.$$
For $p\in(1,\infty)$, let $A := A_p$ denote the sectorial operator defined by
$$A_pu:=-\Delta u\ \ {\rm for}\ \ u\in D(A_p):=\left\{\varphi\in W^{2,p}(\Omega);\frac{\partial \varphi}{\partial \nu}=0\ \ {\rm{on}}\ \partial \Omega\right\}.$$
Then the operator $(A+1)$ possesses fractional powers $(A + 1)^\beta$, $\beta\geq0$, the domains of
which have the embedding propertie
$D((A_4+1)^\theta)\hookrightarrow W^{1,\infty}(\Omega)$ for $\theta\in(\frac78,1)$ (cf.\cite{Winkler-MN-2010,Zhang&Li-ZAMP-2015-G}). Hence, by virtue of $L^p-L^q$ estimates associated heat semigroup
(cf.\cite{Winkler-MN-2010}),  (\ref{eq-c-maxmum-principle-s0}), (\ref{eq-chi-f-phi}), (\ref{eq-F-eq1}), (\ref{eq-jfo-zzh-n-4-norm}), (\ref{eq-jfo-zzh-u-infty-norm})
and (\ref{eq-jfo-nabla-zzh-c-4-norm}), there exist positive constants $C_9$, $C_{10}$, $C_{11}$ and $C_{12}$ such that
\begin{eqnarray}
\|{c_\varepsilon(\cdot,t)}\|_{W^{1,\infty}(\Omega)}&\leq&\|(-\Delta+1)^\theta{c_\varepsilon}(\cdot,t)\|_{L^4(\Omega)}\nonumber\\
&\leq&C_9 t^{-\theta}e^{-\nu t}\|c_{0,\varepsilon}\|_{L^4(\Omega)}\nonumber\\
&\ &+C_9\int^t_0 (t-s)^{-\theta}e^{-\nu (t-s)}\|({c_\varepsilon}-{F_\varepsilon}({n_\varepsilon})f({c_\varepsilon})-{u_\varepsilon}\cdot\nabla{c_\varepsilon})(s)\|_{L^4(\Omega)}ds\nonumber\\
&\leq&C_9 \tau^{-\theta}e^{-\nu t}\|c_{0,\varepsilon}\|_{L^4(\Omega)}+C_{10}\int^t_0 (t-s)^{-\theta}e^{-\nu (t-s)}ds\nonumber\\
&\ &+C_{10}\int^t_0 (t-s)^{-\theta}e^{-\nu (t-s)}\|{n_\varepsilon}(s)\|_{L^4(\Omega)}ds\nonumber\\
&\ &+C_{10}\int^t_0 (t-s)^{-\theta}e^{-\nu (t-s)}\|\nabla {c_\varepsilon}\|_{L^4(\Omega)}ds\nonumber\\
&\leq&C_9 \tau^{-\theta}e^{-\nu t}\|c_{0,\varepsilon}\|_{L^4(\Omega)}+C_{11}\Gamma(1-\theta)\nonumber\\
&\leq&C_{12}\ \ \ \ {\rm{for\ all}}\ t\in(\tau,T_{max,\varepsilon})\label{eq-jfo-zzh-c-w1infty-norm}
\end{eqnarray}
with some $\nu>0$, where $\Gamma(\cdot)$ is the Gamma function.

We finally use (\ref{eq-jfo-zzh-c-w1infty-norm}) to estimate $\|n_\varepsilon\|_{L^\infty(\Omega)}$. For any ${\beta}>1$, from (\ref{eq-jfo-zzh-c-w1infty-norm}) we know that there exist positive constants $C_{13}$, $C_{14}$ and $C_{15}$ such that
\begin{eqnarray}
&&\frac d{dt}\int_\Omega{n_\varepsilon}^{\beta} +{\beta}({\beta}-1)\int_\Omega{n_\varepsilon}^{{\beta}-2}|\nabla{n_\varepsilon}|^p\nonumber\\
&\leq&\frac d{dt}\int_\Omega{n_\varepsilon}^{\beta} +{\beta}({\beta}-1)\int_\Omega(|\nabla{n_\varepsilon}|^2+\varepsilon)^{\frac{p-2}2}{n_\varepsilon}^{{\beta}-2}|\nabla{n_\varepsilon}|^2\nonumber\\
&=&{\beta}({\beta}-1)\int_\Omega {n_\varepsilon}{F_\varepsilon}'({n_\varepsilon})\chi({c_\varepsilon})\nabla {c_\varepsilon}{n_\varepsilon}^{{\beta}-2}\nabla{n_\varepsilon}\nonumber\\
&\leq&C_{13}\int_\Omega{n_\varepsilon}^{{\beta}-2}|\nabla {n_\varepsilon}|\nonumber\\
&\leq&\int_\Omega{n_\varepsilon}^{{\beta}-2}({\beta}({\beta}-1)|\nabla {n_\varepsilon}|^p+C_{14})\nonumber\\
&\leq&{\beta}({\beta}-1)\int_\Omega{n_\varepsilon}^{{\beta}-2}|\nabla{n_\varepsilon}|^p+\int_\Omega{n_\varepsilon}^{\beta}+C_{15}\ \ \ \ {\rm{for\ all}}\ t\in(\tau,T_{max,\varepsilon}).\label{eqsssss}
\end{eqnarray}
Therefore, integrating (\ref{eqsssss}) with respect to $t$ yields $C_{16}>0$ such that
$$\int_\Omega{n_\varepsilon}^{\beta}\leq C_{16}\ \ \ \ {\rm{for\ all}}\ t\in(\tau,T_{max,\varepsilon})$$
with any $\beta\geq 1$. Upon an application of the well-known Moser-Alikakos iteration procedure \cite{Alikakos-CPDE-1979,Tao&Winkler-JDE-2012}, we see that
\begin{eqnarray}
\|{n_\varepsilon}\|_{L^\infty(\Omega)}\leq C_{17}\ \ \ \ {\rm{for\ all}}\ t\in(\tau,T_{max,\varepsilon})\label{eq-jfo-zzh-n-infty-norm}
\end{eqnarray}
with some $C_{17}>0$. In view of (\ref{eq-jfo-zzh-u-infty-norm}), (\ref{eq-jfo-zzh-c-w1infty-norm})and (\ref{eq-jfo-zzh-n-infty-norm}),
we apply Lemma \ref{lem2.1} to reach a contradiction.
\end{proof}

\section{Further uniform estimates for (\ref{model-approximate})}
As mentioned in Remark \ref{remone}, it's crucial to prove (\ref{eqkksss}). The following lemma plays a key role in the proof.
\begin{lem}\label{lem5.1}
Let $m_0\geq1$, $p>\frac{25}{12}$, $m\geq1$ be such that
\be
\frac{m_0 (3 p-4)}{4 (p-1)}+\frac{p-2}{p-1}<m\leq m_0(p-\frac43)+3(p-2),\label{m-constant-condition-lem6.1}
\ee
and $\int_\Omega n_{0\varepsilon}^m\leq C$.
Then for all $K>0$ there exist $C=C(m,p,K)>0$ such that if for all $\varepsilon\in(0,1)$ there hold
\be
\int_\Omega{n_\varepsilon}^{m_0}(\cdot,t)\leq K\ \ \ \ {for\ all}\ t\geq0\label{n-m-0-condition-lem6.1}
\ee
and
\be
\int^t_0\int_\Omega|\nabla{c_\varepsilon}(\cdot,s)|^4ds\leq K(t+1)\ \ \ \ {for\ all}\ t\geq0,\label{nabla-c-4-condition-lem6.1}
\ee
then we have
\be
\int_\Omega{n_\varepsilon}^m(\cdot,t)\leq C(t+1)\ \ \ \ {for\ all}\ t\geq0\label{n-m-result-lem6.1}
\ee
and
\be
\int^t_0\int_\Omega|\nabla{n_\varepsilon}^{m_*}|^p\leq C(t+1)\ \ \ \ {for\ all}\ t\geq0,\label{nabla-n-m-2-result-lem6.1}
\ee
where $m_*=\frac{m-2}p+1$.
\end{lem}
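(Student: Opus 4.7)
The plan is to test the first equation of the regularized system \eqref{model-approximate} by $m n_\varepsilon^{m-1}$ and integrate by parts over $\Omega$. The convection term vanishes because $\nabla\cdot u_\varepsilon=0$; the $p$-Laplacian diffusion, on account of $p\geq 2$ and the identity $|\nabla n_\varepsilon^{m_*}|^p = m_*^p n_\varepsilon^{m-2}|\nabla n_\varepsilon|^p$, contributes a lower bound
\begin{equation*}
m(m-1)\int_\Omega (|\nabla n_\varepsilon|^2+\varepsilon)^{(p-2)/2} n_\varepsilon^{m-2}|\nabla n_\varepsilon|^2 \;\geq\; \frac{m(m-1)}{m_*^p}\int_\Omega|\nabla n_\varepsilon^{m_*}|^p;
\end{equation*}
and the chemotactic cross-term is dominated by $C\int_\Omega n_\varepsilon^{m-1}|\nabla n_\varepsilon||\nabla c_\varepsilon|$, since $F_\varepsilon'\leq 1$ and $c_\varepsilon$ is uniformly bounded by \eqref{eq-c-maxmum-principle-s0}. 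This produces the basic differential inequality $\tfrac{d}{dt}\int_\Omega n_\varepsilon^m + C_1\int_\Omega|\nabla n_\varepsilon^{m_*}|^p \leq C_2\int_\Omega n_\varepsilon^{m-1}|\nabla n_\varepsilon||\nabla c_\varepsilon|$.

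I would then apply Young's inequality to the right-hand side with the split $a=n_\varepsilon^{(m-2)/p}|\nabla n_\varepsilon|$ and $b=n_\varepsilon^{m-1-(m-2)/p}|\nabla c_\varepsilon|$, so that $a^p = n_\varepsilon^{m-2}|\nabla n_\varepsilon|^p$ is a small multiple of $|\nabla n_\varepsilon^{m_*}|^p$ and can be absorbed on the left. The residual is $C\int n_\varepsilon^{m-(p-2)/(p-1)}|\nabla c_\varepsilon|^{p/(p-1)}$, to which H\"older's inequality with exponents $4(p-1)/p$ and $4(p-1)/(3p-4)$ reduces to
\begin{equation*}
C\bigl(\textstyle\int|\nabla c_\varepsilon|^4\bigr)^{p/(4(p-1))}\bigl(\textstyle\int n_\varepsilon^\gamma\bigr)^{(3p-4)/(4(p-1))}, \qquad \gamma:=\frac{4(m(p-1)-(p-2))}{3p-4}.
\end{equation*}
A direct calculation shows that the lower hypothesis in \eqref{m-constant-condition-lem6.1} is precisely the statement $\gamma>m_0$, so \eqref{n-m-0-condition-lem6.1} alone cannot control $\int n_\varepsilon^\gamma$. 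Setting $v:=n_\varepsilon^{m_*}$, so that $\|v\|_{L^{m_0/m_*}}$ is bounded by \eqref{n-m-0-condition-lem6.1}, the three-dimensional Gagliardo-Nirenberg inequality interpolates $\|v\|_{L^{\gamma/m_*}}$ between $\|\nabla v\|_{L^p}$ and $\|v\|_{L^{m_0/m_*}}$.

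A second Young step then converts the resulting product $\|\nabla c_\varepsilon\|_{L^4}^{p/(p-1)}\|\nabla v\|_{L^p}^{\theta\sigma}$ (where $\sigma=(m-(p-2)/(p-1))/m_*$ and $\theta$ is the GN parameter) into $\epsilon\|\nabla v\|_{L^p}^p + C\|\nabla c_\varepsilon\|_{L^4}^q$ with $q=p^2/((p-1)(p-\theta\sigma))$, provided $\theta\sigma<p$. After a further absorption on the left, I would integrate in time using the assumption $\int_0^t\int|\nabla c_\varepsilon|^4\leq K(t+1)$ from \eqref{nabla-c-4-condition-lem6.1} (together with an extra H\"older on $(0,t)$ when $q<4$) and the assumed bound on $\int n_{0\varepsilon}^m$, to obtain both \eqref{n-m-result-lem6.1} and \eqref{nabla-n-m-2-result-lem6.1} simultaneously. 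The main obstacle is the exponent bookkeeping at the Gagliardo-Nirenberg/Young-absorption step: the upper hypothesis on $m$ in \eqref{m-constant-condition-lem6.1} should be exactly what guarantees both $\theta\sigma<p$ (for legitimate absorption) and $q\leq 4$ (for the subsequent time integration), and ensuring that the allowed range for $m$ in \eqref{m-constant-condition-lem6.1} is non-empty turns out to require the hypothesis $p>25/12$.
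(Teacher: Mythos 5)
Your proposal is correct and follows essentially the same route as the paper: test by $n_\varepsilon^{m-1}$, absorb the chemotactic term into the $p$-Laplacian dissipation via Young, apply H\"older with exponents $\alpha=\tfrac{4(p-1)}{3p-4}$ and $\alpha'=\tfrac{4(p-1)}{p}$ to isolate $\int|\nabla c_\varepsilon|^4$, interpolate the resulting $n_\varepsilon$-power via Gagliardo--Nirenberg against the $L^{m_0}$ baseline, and close with a further Young absorption into the dissipation term. Your identifications $\gamma=\beta\alpha$ and $\sigma=\beta/m_*$ match the paper's quantities exactly, and your observation that the lower hypothesis in \eqref{m-constant-condition-lem6.1} is precisely $\gamma>m_0$ (so that $\theta>0$) is the paper's step \eqref{eqjjjkkk}.

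The only cosmetic difference is the order in which the two Young inequalities are taken. The paper first applies Young with exponents $\alpha,\alpha'$ so that the $\nabla c_\varepsilon$ factor already appears as $\int|\nabla c_\varepsilon|^4$, and then absorbs $\|\nabla n_\varepsilon^{m_*}\|_p^{\theta\beta\alpha/m_*}$ under the single condition $\theta\beta\alpha/m_*\le p$; this makes the time integration immediate via \eqref{nabla-c-4-condition-lem6.1} with no secondary H\"older in $t$. Your version absorbs $\|\nabla v\|_p^{\theta\sigma}$ first (needing only $\theta\sigma<p$) and is then left with $\|\nabla c_\varepsilon\|_4^q$ with $q=\tfrac{p^2}{(p-1)(p-\theta\sigma)}$, which you must afterward reduce to the $L^4$ budget via H\"older in $t$; this requires $q\le 4$, and a short computation shows $q\le4 \Leftrightarrow \theta\sigma\alpha\le p$, which is precisely the paper's constraint. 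So the condition that actually bites in your scheme is $q\le4$ (equivalently $\theta\sigma\alpha\le p$), not the weaker $\theta\sigma<p$; when you finalize the exponent bookkeeping you should see that the upper bound in \eqref{m-constant-condition-lem6.1} is an equality case of $\theta\sigma\alpha\le p$, matching the paper's identity $\frac{\beta}{m_*}\theta\alpha-p=\frac{p^2(3m-m_0(3p-4)-9(p-2))}{(3p-4)(3m-m_0+(m_0+3)(p-2))}$.
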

\begin{proof} Test the first equation in (\ref{model-approximate}) by ${n_\varepsilon}^{m-1}$ and use Young's inequality along with (\ref{eq-chi-f-phi}), (\ref{eq-F-eq1}) and (\ref{eq-c-maxmum-principle-s0}) to see that for all $t > 0$,
\begin{eqnarray*}
\frac1m\frac{d}{dt}\int_\Omega{n_\varepsilon}^m+(m-1)\int_\Omega|\nabla{n_\varepsilon}|^p{n_\varepsilon}^{m-2}&\leq&C_1(m-1)\int_\Omega{n_\varepsilon}^{m-1}|\nabla{c_\varepsilon}|\cdot |\nabla{n_\varepsilon}|\\
&\leq&(m-1)\int_\Omega\left(\frac12|\nabla{n_\varepsilon}|^p{n_\varepsilon}^{m-2}+C_2({n_\varepsilon}^{\frac{2-m}p+m-1}|\nabla{c_\varepsilon}|)^{p'}\right)\\
\end{eqnarray*}
so that
\begin{eqnarray}
\frac 1m\frac{d}{dt}\int_\Omega{n_\varepsilon}^m+
\frac{m-1}2\int_\Omega|\nabla{n_\varepsilon}|^p{n_\varepsilon}^{m-2}\leq C_2(m-1)\int_\Omega({n_\varepsilon}^{\frac{2-m}p+m-1}|\nabla{c_\varepsilon}|)^{p'}\ \ \ \ {\rm{for\ all}}\ t>0,\label{eq1-lem6.1-proof}
\end{eqnarray}
where $p'=\frac{p}{p-1}$. Let
$${\beta}=\left({\frac{2-m}p+m-1}\right)\cdot p'=m+\frac{1}{p-1}-1$$
and
$$m_*=\frac{m-2}p+1.$$
Noticing that
$$\int_\Omega|\nabla{n_\varepsilon}|^p{n_\varepsilon}^{m-2}=\int_\Omega|{n_\varepsilon}^{\frac{m-2}p}\nabla{n_\varepsilon}|^p=\frac 1{m_*^p}\int_\Omega|\nabla{n_\varepsilon}^{m_*}|^p\ \ \ \ {\rm{for\ all}}\ t>0,$$
(\ref{eq1-lem6.1-proof}) turns into
\begin{eqnarray}
\frac 1m\frac{d}{dt}\int_\Omega{n_\varepsilon}^m+\frac{(m-1)}{2m_*^p}\int_\Omega|\nabla{n_\varepsilon}^{m_*}|^p
&\leq& C_2(m-1)\int_\Omega{n_\varepsilon}^{{\beta}}|\nabla{c_\varepsilon}|^{p'}\ \ \ \ {\rm{for\ all}}\ t>0.\label{eq2-lem6.1-proof}
\end{eqnarray}
Now in order to further estimate the right-hand side herein, we invoke the
H${\rm{\ddot{o}}}$lder's inequality to obtain
\be
\int_\Omega{n_\varepsilon}^{{\beta}}|\nabla{c_\varepsilon}|^{p'}\leq (\int_\Omega{n_\varepsilon}^{{\beta}\alpha})^{\frac1\alpha}(\int_\Omega|\nabla{c_\varepsilon}|^{4})^{\frac1{\alpha'}}\ \ \ \ {\rm{for\ all}}\ t>0,\label{eq3-lem6.1-proof}
\ee
with $\alpha=\frac{4(p-1)}{3p-4}$, and $\alpha'=\frac\alpha{\alpha-1}=\frac{4(p-1)}p$.

Due to the left part of our assumption (\ref{m-constant-condition-lem6.1}) we have
$$\frac{m_0}{m_*}-\frac{{\beta}\alpha}{m_*}=-\frac{p (4 m (p-1)+m_0 (4-3 p)-4 p+8)}{(3 p-4) (m+p-2)}<0.$$
And from $$\frac{m_*}{\beta\alpha}-\left(\frac1p-\frac13\right)=\frac{m (4 p-7)+5 (p-2)}{12 (m (p-1)-p+2)}\geq0$$
we know that $W^{1,p}(\Omega)\hookrightarrow L^{\frac{{\beta}\alpha}{m_*}}(\Omega)\hookrightarrow L^\frac{m_0}{m_*}(\Omega)$, whence in particular the number
\begin{eqnarray}
\theta:=\frac{3 (m+p-2) (4 m (p-1)+m_0 (4-3 p)-4 p+8)}{4 (m (p-1)-p+2) (3 m+(m_0+3) p-3 (m_0+2))}\label{eqjjjkkk}
\end{eqnarray}
satisfies $\theta>0$, since the left part of our assumption (\ref{m-constant-condition-lem6.1}) ensures
\begin{eqnarray}
m- \left(m_0 (1-\frac{p}{3})-p+2\right)&>&\frac{m_0 (3 p-4)}{4 p-4}+\frac{p-2}{p-1}-\left(m_0 (1-\frac{p}{3})-p+2\right)\nonumber\\
&=&\frac{p (m_0 (4 p-7)+12 (p-2))}{12 (p-1)}>0,
\end{eqnarray}
which is enough to warrant that
$$3 m+(m_0+3) p-3 (m_0+2)>0.$$
And also $\theta<1$ since
$$\theta-1=-\frac{m_0 p (5(p-2)+(4p-7)m)}{4 (m (p-1)-p+2) (3 m+(m_0+3) p-3 (m_0+2))}<0,$$
and accordingly the Gagliardo-Nirenberg inequality provides $C_3 > 0$ such that
\begin{eqnarray*}
(\int_\Omega{n_\varepsilon}^{{\beta}\alpha})^{\frac1\alpha}&=&\|{n_\varepsilon}^{m_*}\|_{\frac{{\beta}\alpha}{m_*}}^{\frac{{\beta}}{m_*}}\\
&\leq& C_3\|\nabla{n_\varepsilon}^{m_*}\|_p^{\frac{{\beta}}{m_*}\theta}\|{n_\varepsilon}^{m_*}\|_{\frac{m_0}{m_*}}^{\frac{{\beta}}{m_*}(1-\theta)}+C_3\|{n_\varepsilon}^{m_*}\|_{\frac{m_0}{m_*}}^{\frac{{\beta}}{m_*}}\ \ \ \ {\rm{for\ all}}\ t>0,
\end{eqnarray*}
where $\theta$ given by (\ref{eqjjjkkk}) satisfies
$$\frac{m_*}{{\beta}\alpha}=\theta(\frac1p-\frac13)+(1-\theta)\frac{m_*}{m_0}.$$
As $$\|{n_\varepsilon}^{m_*}\|_{\frac{m_0}{m_*}}^{\frac{m_0}{m_*}}=\int_\Omega{n_\varepsilon}^{m_0}\leq K\ \ \ \ {\rm{for\ all}}\ t>0$$
by (\ref{n-m-0-condition-lem6.1}), together with (\ref{eq3-lem6.1-proof}), $C_p$ inequality, and Young's inequality this shows that we can find $C_4 > 0$ fulfilling
\begin{eqnarray*}
\int_\Omega{n_\varepsilon}^{{\beta}}|\nabla{c_\varepsilon}|^{p'}&\leq& C_4(\|\nabla{n_\varepsilon}^{m_*}\|_p^{\frac{{\beta}}{m_*}\theta}+1)(\int_\Omega|\nabla{c_\varepsilon}|^{4})^{\frac1{\alpha'}}\\
&\leq&\delta (\|\nabla{n_\varepsilon}^{m_*}\|_p^{\frac{{\beta}}{m_*}\theta\alpha}+1)+C_\delta\int_\Omega|\nabla{c_\varepsilon}|^{4}\ \ \ \ {\rm{for\ all}}\ t>0,
\end{eqnarray*}
where $\delta>0$ and $C_\delta$ are two constant to be chosen later. From (\ref{m-constant-condition-lem6.1}) we have
$$\frac{{\beta}}{m_*}\theta\alpha-p=\frac{p^2 (3 m-m_0(3p-4)-9(p-2))}{(3 p-4) (3 m-m_0+(m_0+3)(p-2))}\leq0,$$
whence another application of Young's inequality and an appropriate choice of $\delta$ yield $C_5>0$ satisfying
\begin{eqnarray*}
C_2(m-1)\int_\Omega{n_\varepsilon}^{{\beta}}|\nabla{c_\varepsilon}|^{p'}\leq \frac{(m-1)}{4m_*^p}\int_\Omega|\nabla{n_\varepsilon}^{m_*}|^p+C_5\int_\Omega|\nabla{c_\varepsilon}|^{4}+C_5\ \ \ \ {\rm{for\ all}}\ t>0.
\end{eqnarray*}
This shows that (\ref{eq2-lem6.1-proof}) implies that
\begin{eqnarray}\label{eq2-lem6.2}
\frac 1m\frac{d}{dt}\int_\Omega{n_\varepsilon}^m+\frac{(m-1)}{4m_*^p}\int_\Omega|\nabla{n_\varepsilon}^{m_*}|^p
 \leq C_5\int_\Omega|\nabla{c_\varepsilon}|^{4}+C_5\ \ \ \ {\rm{for\ all}}\ t>0.
\end{eqnarray}
Let $y(t):=\int_\Omega{n_\varepsilon}^m(\cdot,t), t>0$, and $h(t):=C_5\int_\Omega|\nabla{c_\varepsilon}|^{4}+C_5$, $t>0$, then in view of the nonnegativity of $h(t)$ and our assumption (\ref{nabla-c-4-condition-lem6.1}), (\ref{eq2-lem6.2}) thus firstly implies that
\be
y(t)\leq y(0)+m\int^t_0h(s)ds\leq  y(0)+C_5Km(t+1)+C_5t\ \ \ \ {\rm{for\ all}}\ t>0,\label{eq6-lem6.1-proof}
\ee
whereupon (\ref{nabla-c-4-condition-lem6.1}), (\ref{eq2-lem6.2}) and (\ref{eq6-lem6.1-proof}) secondly entail that
\begin{eqnarray*}
\int^t_0\int_\Omega|\nabla{n_\varepsilon}^{m_*}|^p&\leq&\frac{4m_*^p}{m-1}\int^t_0h(s)ds+\frac{4m_*^p}{(m-1)m}y(0)\\
&\leq& \frac{4m_*^p}{m-1}(C_5K(t+1)+C_5t)+\frac{4m_*^p}{(m-1)m}y(0)\ \ \ \ {\rm{for\ all}}\ t>0,
\end{eqnarray*}
so that indeed both (\ref{n-m-result-lem6.1}) and (\ref{nabla-n-m-2-result-lem6.1}) hold with some conveniently large $C=C(m,p,K)>0$.
\end{proof}

\begin{rem} We shall point out that there exist $m\geq1$ satisfies (\ref{m-constant-condition-lem6.1}). Indeed, a formal computation shows that
$${m_0}\left(p-\frac{4}{3}\right)+3 (p-2)-\left(\frac{{m_0} (3 p-4)}{4 p-4}+\frac{p-2}{p-1}\right)=\frac{(3 p-4) ({m_0} (4 p-7)+12 (p-2))}{12 (p-1)}>0.$$
and
$${m_0}\left(p-\frac{4}{3}\right)+3 (p-2)-1=m_0 \left(p-\frac{4}{3}\right)+3 p-7\geq 4p-\frac{25}3>0,$$
since $p>\frac{25}{12}.$
\end{rem}

We shall see that this indeed leads to improved information whenever $p >\frac{32}{15}>\frac{25}{12}$. Repeatedly applying Lemma \ref{lem5.1}, we can gradually raise the parameter $m$ up to 2, which is equivalent to $m_*=1$, so we can get a higher order gradient estimate of ${n_\varepsilon}$.

 \begin{cor}\label{cor5.2}
Let $p=\frac{32}{15}+\delta$ for some $\delta>0$ small such that $\delta_1=\frac{\log \left(\frac{25 \delta}{20 \delta+1}\right)}{\log \left(\delta+\frac{4}{5}\right)}$ is a positive integer and $\int_\Omega n_{0\varepsilon}^{2}\leq C$.
Then for all $K>0$ there exist $C=C(m,p,K)>0$ such that if for some $\varepsilon\in(0,1)$ we have
\be
\int_\Omega{n_\varepsilon}(\cdot,t)\leq K\ \ \ \ {\rm{for\ all}}\ t\geq0\nonumber
\ee
and
\be
\int^t_0\int_\Omega|\nabla{c_\varepsilon}(\cdot,s)|^4ds\leq K(t+1)\ \ \ \ {\rm{for\ all}}\ t\geq0,\nonumber
\ee
then
\be
\int^T_0\int_\Omega|\nabla{n_\varepsilon}|^p\leq C(T+1)\ \ \ \ {\rm{for\ all}}\ t\geq0.\label{eq-cor5.0-nabla-n-p}
\ee
\end{cor}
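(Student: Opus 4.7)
The plan is to apply Lemma~\ref{lem5.1} iteratively, progressively raising the integrability exponent $m_0$ from $m_0=1$ (the mass bound) up to the target $m=2$; since $m_*=(m-2)/p+1=1$ at that point, the conclusion (\ref{nabla-n-m-2-result-lem6.1}) of Lemma~\ref{lem5.1} specialises to exactly the desired estimate (\ref{eq-cor5.0-nabla-n-p}).

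At each iteration I would saturate the upper bound in (\ref{m-constant-condition-lem6.1}) by setting $m_{j+1}:=m_j(p-\frac{4}{3})+3(p-2)$; for $p=\frac{32}{15}+\delta$ this is the linear recursion $m_{j+1}=(\frac{4}{5}+\delta)m_j+(\frac{2}{5}+3\delta)$ starting from $m_0=1$. Its unique fixed point $m^*=(2+15\delta)/(1-5\delta)$ satisfies $m^*>2$ for $\delta>0$, and the closed-form solution $m_j=m^*-(m^*-1)(\frac{4}{5}+\delta)^j$ shows that $m_{\delta_1}=2$ exactly when $(\frac{4}{5}+\delta)^{\delta_1}=25\delta/(1+20\delta)$, which is precisely the hypothesis that $\delta_1$ is a positive integer. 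At each step I would then check three admissibility conditions: the upper bound in (\ref{m-constant-condition-lem6.1}) is saturated by construction; the lower bound, after clearing denominators, amounts to $m_j(12p^2-37p+28)>12(p-1)(p-2)(4-3p)$, which is automatic since $12p^2-37p+28=12(p-\frac{7}{4})(p-\frac{4}{3})>0$ for $p>\frac{7}{4}$ while the right-hand side is negative for $p>2$; and the bound $m_{j+1}\leq 2$ throughout guarantees, via H\"older interpolation, that the initial-data condition $\int_\Omega n_{0\varepsilon}^{m_{j+1}}\leq C$ follows from the assumed $\int_\Omega n_{0\varepsilon}^2\leq C$.

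The hardest point is that Lemma~\ref{lem5.1} produces $\int_\Omega n_\varepsilon^{m_j}(\cdot,t)\leq C(t+1)$, which is not uniform in $t$, yet its hypothesis (\ref{n-m-0-condition-lem6.1}) at the next step demands a uniform-in-$t$ bound. I would handle this by fixing an arbitrary $T>0$ and using the uniform-on-$[0,T]$ bound $\int_\Omega n_\varepsilon^{m_j}(\cdot,t)\leq C_j(T+1)$ as the input constant $K$ for the next iteration; since $\delta_1$ is a fixed integer independent of $T$, the cumulative $T$-dependence of the constants after all $\delta_1$ iterations is controlled and can be absorbed into the constant $C$ of the final estimate. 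At the last iteration, Lemma~\ref{lem5.1} with $m_0=m_{\delta_1-1}$ and $m=m_{\delta_1}=2$ gives, via (\ref{nabla-n-m-2-result-lem6.1}) with $m_*=1$, exactly $\int_0^T\int_\Omega|\nabla n_\varepsilon|^p\leq C(T+1)$, completing the proof.
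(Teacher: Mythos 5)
Your proposal follows the paper's proof essentially step for step: you iterate Lemma~\ref{lem5.1} from $m_0=1$ upward, saturating the upper bound in~(\ref{m-constant-condition-lem6.1}) to obtain the affine recursion $m_{j+1}=(\delta+\tfrac45)m_j+(3\delta+\tfrac25)$, you obtain the same closed form $m_j=m^*-(m^*-1)(\delta+\tfrac45)^j$ with $m^*=\frac{2+15\delta}{1-5\delta}>2$, and you derive the same characterization $(\delta+\tfrac45)^{\delta_1}=\frac{25\delta}{1+20\delta}$ for when $m_{\delta_1}=2$, which is exactly the hypothesis on $\delta_1$. Two remarks on the detail work.

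First, a minor algebraic slip: clearing denominators in the lower-bound check of~(\ref{m-constant-condition-lem6.1}) does not produce $m_j(12p^2-37p+28)>12(p-1)(p-2)(4-3p)$ but rather, after cancelling a positive factor of $(3p-4)$ from both sides, $m_j(4p-7)+12(p-2)>0$; equivalently $m_j(12p^2-37p+28)>-12(p-2)(3p-4)$ without the extra $(p-1)$. Since both forms have a positive left side and a negative right side for $p>2$, the conclusion survives, so this does not affect the admissibility argument.

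Second, and more seriously, the obstacle you correctly identify in your last paragraph is \emph{not} resolved by the device you propose. Lemma~\ref{lem5.1} only delivers $\int_\Omega n_\varepsilon^{m}(\cdot,t)\le C(t+1)$, while its next-step hypothesis~(\ref{n-m-0-condition-lem6.1}) demands a \emph{uniform-in-time} bound. If you feed $K=C_j(T+1)$ in as input at step $j+1$, then, reading off~(\ref{eq6-lem6.1-proof}) where the output constant is linear in $K$, the bound at step $j+1$ degrades to order $(T+1)^2$, and after $\delta_1$ iterations the final estimate is of order $(T+1)^{\delta_1+1}$ rather than the asserted $C(T+1)$ with $C$ independent of $T$. ``Absorbing into the constant'' makes $C$ depend on $T$, which is exactly what the corollary's statement forbids. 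The paper's one-line proof of Corollary~\ref{cor5.2} silently suffers from the same gap; the genuine fix is to strengthen Lemma~\ref{lem5.1} itself. Specifically, the differential inequality~(\ref{eq2-lem6.2}) supports an additional absorption term: using the Gagliardo--Nirenberg inequality in the form
\begin{equation*}
\int_\Omega n_\varepsilon^m \;=\;\bigl\|n_\varepsilon^{m_*}\bigr\|_{m/m_*}^{m/m_*}\;\le\;C\,\bigl\|\nabla n_\varepsilon^{m_*}\bigr\|_p^{\tilde\theta m/m_*}\,\bigl\|n_\varepsilon^{m_*}\bigr\|_{1/m_*}^{(1-\tilde\theta)m/m_*}+C\,\bigl\|n_\varepsilon^{m_*}\bigr\|_{1/m_*}^{m/m_*},
\end{equation*}
together with mass conservation for the low-order norm and the elementary check that $\tilde\theta m/m_*<p$ is equivalent to $p>\tfrac32$, one can insert a term $\tfrac{1}{K_1}\int_\Omega n_\varepsilon^m$ on the left of~(\ref{eq2-lem6.2}) for some $K_1>0$ independent of $t$. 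A standard ODI comparison then upgrades the conclusion of Lemma~\ref{lem5.1} to $\int_\Omega n_\varepsilon^m(\cdot,t)\le C$ uniformly in $t$, after which your iteration scheme closes cleanly. Without some such upgrade, the step you label ``the hardest point'' remains a gap.
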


 \begin{proof}
 We define $(m_k)_{k\in \mathbb{N}_0}\subset \mathbb{R}$ by letting $m_0 = 1$ and
 $$m_{k+1}=m_k(\frac{32}{15}+\delta-\frac43)+3(\frac{32}{15}+\delta-2)=m_k(\delta+\frac45)+3(\delta+\frac{2}{15})\ \ \ \ {\rm{for}}\ k\in \mathbb{N}_0.$$
 A simple computation shows that
 \be
 m_k=(\delta+\frac45)^k(1+\frac{15\delta+2}{5\delta-1})-\frac{15\delta+2}{5\delta-1}\ \ \ \ {\rm{for}}\ k\in \mathbb{N}.
 \ee
Solving the equation
 \be
 (\delta+\frac45)^x(1+\frac{15\delta+2}{5\delta-1})-\frac{15\delta+2}{5\delta-1}=2\ \ \ \ {\rm{for}}\ x\geq0,
 \ee
we have
 \be
 x=\frac{\log \left(\frac{25 \delta}{20 \delta+1}\right)}{\log \left(\delta+\frac{4}{5}\right)}>0.
 \ee
 Since $\delta_1=\delta_1(\delta)$ is a continuous and monotonically decreasing real function on the interval $(0,\frac1{10})$, and $\lim_{\delta\rightarrow0}\delta_1=+\infty$ as well as ${\delta_1}|_{\delta=\frac1{100}}=\frac{\log \left(\frac{24}{5}\right)}{\log \left(\frac{100}{81}\right)}\approx7.44$.
 Hence we can take $\delta>0$ small such that $\delta_1$ is a positave integer, and apply Lemma \ref{lem5.1} $\delta_1$ times, then we obtain $m_{\delta_1}=2$ hence $m_*=1$ and get the desired result.
 \end{proof}
A direct application of Corollary \ref{cor5.2} enable us to get a higher order regularity of $n_\varepsilon$.
\begin{lem}\label{lem5.3}
 There exists $C>0$ such that for any $\varepsilon\in(0,1)$, we  have
\begin{eqnarray}
\int^T_0\int_\Omega |{u_\varepsilon}|^{\frac{10}3}\leq C(T+1)\ \ \ \ {for\ all}\ T>0,\label{eq-lem5.1-u}
\end{eqnarray}
and
\begin{eqnarray}
\int^T_0\int_\Omega {n_\varepsilon^r}\leq C(T+1),\ \ \ \ {for\ all}\ T>0,\label{eq-lem5.1-n}
\end{eqnarray}
where
\begin{eqnarray}\label{parameter-r}
  \left\{\begin{array}{lll}
     \medskip
     r\geq1,&\ \ p\geq3,\\
     \medskip
     r\in[1,\frac{3p}{3-p}),&\ \ p\in(\frac{32}{15},3).
  \end{array}\right.
\end{eqnarray}
\end{lem}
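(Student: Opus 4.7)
The plan is to prove the two estimates independently using the information already established in Lemma \ref{lem3.3} and Corollary \ref{cor5.2}.

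For (\ref{eq-lem5.1-u}), I would apply the three-dimensional Gagliardo--Nirenberg inequality in the form
$$\|u_\varepsilon\|_{L^{10/3}(\Omega)}^{10/3} \leq C\,\|\nabla u_\varepsilon\|_{L^2(\Omega)}^{2}\,\|u_\varepsilon\|_{L^2(\Omega)}^{4/3},$$
coming from the interpolation identity $\frac{3}{10}=\frac{3}{5}(\frac{1}{2}-\frac{1}{3})+\frac{2}{5}\cdot\frac{1}{2}$ combined with the Sobolev embedding $W^{1,2}_0(\Omega;\mathbb{R}^3)\hookrightarrow L^6(\Omega;\mathbb{R}^3)$ (which applies since $u_\varepsilon|_{\partial\Omega}=0$). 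Integrating over $(0,T)$ and combining the uniform-in-$t$ bound $\|u_\varepsilon(\cdot,t)\|_{L^2(\Omega)}\leq C$ with the integrated bound $\int_0^T\|\nabla u_\varepsilon\|_{L^2(\Omega)}^2\leq C(T+1)$ supplied by Lemma \ref{lem3.3} yields (\ref{eq-lem5.1-u}) directly.

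For (\ref{eq-lem5.1-n}), the decisive input is Corollary \ref{cor5.2}, which delivers $\int_0^T\int_\Omega|\nabla n_\varepsilon|^p\leq C(T+1)$. Setting $p^*:=\frac{3p}{3-p}$ when $p<3$ (and interpreting $p^*=\infty$ via the Morrey embedding $W^{1,p}\hookrightarrow L^\infty$ when $p\geq 3$), I would combine the Sobolev embedding $W^{1,p}(\Omega)\hookrightarrow L^{p^*}(\Omega)$ with the mass conservation identity $\|n_\varepsilon(\cdot,t)\|_{L^1(\Omega)}=\|n_0\|_{L^1(\Omega)}$ and a Poincar\'e--Wirtinger-type inequality on the bounded convex domain $\Omega$ to obtain
$$\|n_\varepsilon(\cdot,t)\|_{L^{p^*}(\Omega)}^p\leq C\bigl(\|\nabla n_\varepsilon(\cdot,t)\|_{L^p(\Omega)}^p+1\bigr),$$
which upon integration yields $\int_0^T\|n_\varepsilon\|_{L^{p^*}}^p\leq C(T+1)$. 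For $r\in[1,p^*)$, H\"older's inequality in space gives $\|n_\varepsilon\|_{L^r}^r\leq C\|n_\varepsilon\|_{L^{p^*}}^r$, and for $r\leq p$ the elementary inequality $a^r\leq 1+a^p$ reduces the time integral to the already controlled $\int_0^T\|n_\varepsilon\|_{L^{p^*}}^p$, yielding (\ref{eq-lem5.1-n}) with linear growth in $T+1$. To cover the remaining range $r\in(p,p^*)$, I would insert a Gagliardo--Nirenberg interpolation between the uniform $L^1$-mass bound and the space-time $L^p_tL^{p^*}_x$ bound to redistribute the high spatial exponent into one that the integrated Sobolev information can absorb.

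The $u_\varepsilon$ estimate is essentially the classical Ladyzhenskaya interpolation and should be routine once the exponent $\theta=3/5$ is identified. The main technical obstacle I expect is the $n_\varepsilon$ estimate for values of $r$ approaching $p^*$: arranging the interpolation between $L^\infty_tL^1_x$ and $L^p_tL^{p^*}_x$ so that the resulting bound preserves exactly linear-in-$T$ growth (rather than picking up a higher power of $T+1$) is the delicate point of the argument, and it requires a careful choice of interpolation parameters tied to the dimensional identity $\frac{1}{p}+1-\frac{1}{p^*}=\frac{4}{3}$ in three space dimensions.
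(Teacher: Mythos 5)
Your treatment of the fluid bound (\ref{eq-lem5.1-u}) is correct and coincides with the paper's: the Ladyzhenskaya-type interpolation $\|u_\varepsilon\|_{L^{10/3}(\Omega)}^{10/3}\le C\|\nabla u_\varepsilon\|_{L^2(\Omega)}^{2}\|u_\varepsilon\|_{L^2(\Omega)}^{4/3}$ (via $W^{1,2}_0\hookrightarrow L^6$ and the Dirichlet Poincar\'e inequality), combined with the uniform $L^2$ bound and $\int_0^T\int_\Omega|\nabla u_\varepsilon|^2\le C(T+1)$ from Lemma~\ref{lem3.3}, gives (\ref{eq-lem5.1-u}) at once.

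For (\ref{eq-lem5.1-n}) your step covering $r\in(p,p^*)$ would fail. After unwinding, your route is the same interpolation the paper employs: choose $\theta=\frac{3p(r-1)}{(4p-3)r}$ so that $\frac1r=\theta(\frac1p-\frac13)+(1-\theta)$, apply Gagliardo--Nirenberg at each time and use mass conservation to fix $\|n_\varepsilon(t)\|_{L^1}$, then integrate. Raising to the $r$-th power leaves you with $\int_0^T\|\nabla n_\varepsilon\|_{L^p}^{\theta r}$, which Young's inequality reduces to the available $\int_0^T\|\nabla n_\varepsilon\|_{L^p}^{p}\le C(T+1)$ only when $\theta r\le p$, i.e.\ $r\le\frac{4p}{3}$. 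Your alternative formulation runs into exactly the same wall: interpolating $L^\infty_tL^1_x$ against $L^p_tL^{p^*}_x$ along the diagonal $L^r_tL^r_x$ gives $\frac1r=\frac\lambda p$ together with $\frac1r=\frac\lambda{p^*}+(1-\lambda)$, whose unique solution is $r=1+p-\frac p{p^*}=\frac{4p}{3}$. This is precisely your dimensional identity $\frac1p+1-\frac1{p^*}=\frac43$ read the other way around; it is the obstruction, not a parameter to tune. For any $r$ between $\frac{4p}{3}$ and $p^*$ (a nonempty interval for every $p\in(\frac{32}{15},3)$), $\theta r>p$ and the time integral acquires a superlinear power of $T$, so no choice of interpolation parameters within this $L^\infty_tL^1_x$--$L^p_tW^{1,p}_x$ family can deliver $\int_0^T\int_\Omega n_\varepsilon^r\le C(T+1)$. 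Note that the paper's own one-line Gagliardo--Nirenberg computation is implicitly subject to the same restriction and does not address how $r$ may exceed $\frac{4p}{3}$; in particular reaching $r=6$ as in Corollary~\ref{cor5.4} by this mechanism alone would require $p\ge\frac92$. So while your instinct that the $r\to p^*$ regime is the delicate point is sound, the proposal as written has no device to close it, and closing it would take an additional ingredient absent from both your argument and the paper's proof as recorded.
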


\begin{proof}
(i) Let $\theta=\frac{3 p (r-1)}{(4 p-3) r}$, then $\theta\in[0,1)$ satisfies
\be
\frac1r=\theta(\frac1p-\frac13)+(1-\theta).\label{eq5.1}
\ee
Thus, invoking the Gagliardo-Nirenberg inequality along with (\ref{approximate-initialdata-n}), Corollary \ref{cor5.2} and Young's inequality
we obtain $C_1>0$ and $C_2>0$ such that
\begin{eqnarray}
\|{n_\varepsilon}\|_{L^r((0,T)\times\Omega)}&\leq&C_1(\|\nabla{n_\varepsilon}\|_{_{L^p((0,T)\times\Omega)}}^\theta\|{n_\varepsilon}\|^{(1-\theta)}_{_{L^1((0,T)\times\Omega)}}+\|{n_\varepsilon}\|_{_{L^1((0,T)\times\Omega)}})\\
&\leq& C(T+1)\ \ \ \ {\rm{for\ all}}\ T>0.
\end{eqnarray}

(ii) As a particular consequence of Lemma \ref{lem3.3}, we can find $C_3>0$ and $C_4>0$ such that
\begin{eqnarray}
\int_\Omega|{u_\varepsilon}(\cdot,t)|^2\leq C_3\ \ \ \ {\rm{for\ all}}\ t>0\ \ \ \ {\rm{and}}\ \ \ \ \int^T_0\int_\Omega|\nabla{u_\varepsilon}|^2\leq C_4(T+1)\ \ \ \ {\rm{for\ all}}\ T>0.\nonumber
\end{eqnarray}
Upon an application of the Gagliardo-Nirenberg inequality together with Poincar${\rm{\acute{e}}}$ inequality we see that with some $C_5>0$ and $C_6>0$ we have
\begin{eqnarray}
\int^T_0\int_\Omega|{u_\varepsilon}|^{\frac{10}3}&=&\int^T_0\|{u_\varepsilon}\|^{\frac{10}3}_{\frac{10}3}\nonumber\\
&\leq&C_5\int^T_0(\|{u_\varepsilon}\|^{\frac35}_{W^{1,2}(\Omega)}\|{u_\varepsilon}\|^{\frac25}_2)^{\frac{10}3}\nonumber\\
&\leq&C_6\int^T_0\|\nabla{u_\varepsilon}\|^2_2\nonumber\\
&\leq&C_6C_4(T+1)\ \ \ \ {\rm{for\ all}}\ T>0,\nonumber
\end{eqnarray}
whereby the proof is completed.
\end{proof}

Since $3>p\geq2$ implies $\frac{3p}{3-p}\geq6$, we immediately obtain:
\begin{cor}\label{cor5.4}
 There exists $C>0$ such that for $\varepsilon\in(0,1)$ we have
\begin{eqnarray}
\int^T_0\int_\Omega {n_\varepsilon}^{6}\leq C(T+1),\ \ \ \ {for\ all}\ T>0.\label{eq-cor5.1}
\end{eqnarray}
\end{cor}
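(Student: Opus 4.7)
The plan is to derive the corollary as a direct specialization of Lemma \ref{lem5.3}, since the latter already supplies a uniform-in-$\varepsilon$ space-time bound on $n_\varepsilon^r$ for any admissible exponent $r$ in the range (\ref{parameter-r}). The only work is to verify that the choice $r=6$ is compatible with the hypothesis $p>\frac{32}{15}$ of the main theorem.

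More precisely, I would split into two cases according to (\ref{parameter-r}). If $p\geq3$, then Lemma \ref{lem5.3} applies with any $r\geq1$, so in particular with $r=6$, and (\ref{eq-cor5.1}) follows at once. If instead $p\in(\frac{32}{15},3)$, I need to check that $6<\frac{3p}{3-p}$. Rearranging, this is equivalent to $6(3-p)<3p$, i.e.\ $p>2$, which is ensured by the assumption $p>\frac{32}{15}>2$. Hence $r=6$ lies in the admissible interval $[1,\frac{3p}{3-p})$, and another invocation of Lemma \ref{lem5.3} yields (\ref{eq-cor5.1}).

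There is no real obstacle here: the substance of the argument has been done in Corollary \ref{cor5.2}, which provides the uniform $L^p$-gradient bound on $n_\varepsilon$, and in Lemma \ref{lem5.3}, which combines this with the Gagliardo--Nirenberg interpolation and the mass bound $\int_\Omega n_\varepsilon=\int_\Omega n_0$ to produce the full family of estimates indexed by $r$. The role of the corollary is purely to isolate the particular exponent $r=6$ that will be needed in Section 6 when passing to the limit in the nonlinear terms (in particular, the cubic term $|\nabla n_\varepsilon|^{p-2}\nabla n_\varepsilon$ and the convection term $u_\varepsilon n_\varepsilon$), taking advantage of the elementary arithmetic identity $\frac{3p}{3-p}\big|_{p=2}=6$.
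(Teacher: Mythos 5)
Your proposal is correct and follows essentially the same route as the paper: Corollary \ref{cor5.4} is read off from Lemma \ref{lem5.3} once one checks that $r=6$ is admissible, which the paper also justifies via the observation $\frac{3p}{3-p}\ge 6$ for $p\ge 2$ (with the hypothesis $p>\frac{32}{15}>2$ giving the strict inequality $6<\frac{3p}{3-p}$ needed for the open interval). Your version is in fact slightly more careful than the paper's one-line remark, since you separate the case $p\ge 3$ explicitly and note that strictness follows from $p>2$, but the substance is identical.
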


The following estimates concerning the time derivative can allow us to apply Aubin-Lions lemma later to derive strong compactness properties.
\begin{lem}\label{lem5.5}
There exists $C>0$ such that for all $\varepsilon\in(0,1)$ we have
\begin{eqnarray}
\int^T_0\|{\partial_t}{n_\varepsilon}(\cdot,t)\|^{p'}_{(W^{1,p}(\Omega))^*}\leq C(T+1)\ \ \ \ {for\ all}\ T>0,\label{eq1-lem5.2}\\
\int^T_0\|{\partial_t}{c_\varepsilon}(\cdot,t)\|^{\frac{10}3}_{(W^{1,\frac{10}7}(\Omega))^*}\leq C(T+1)\ \ \ \ {for\ all}\ T>0,\label{eq2-lem5.2}\\
\int^T_0\|{\partial_t}{u_\varepsilon}(\cdot,t)\|^{\frac{5}{4}}_{(W^{1,5}(\Omega))^*}\leq C(T+1)\ \ \ \ {for\ all}\ T>0,\label{eq3-lem5.2}
\end{eqnarray}
where $p'=\frac p{p-1}$.
\end{lem}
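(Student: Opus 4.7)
The plan is to treat all three bounds by the same duality procedure: for each equation in (\ref{model-approximate}) we test against a smooth test function from the appropriate Sobolev space, integrate by parts using $\nabla\cdot u_\varepsilon=0$ (and the fact that $\nabla\cdot Y_\varepsilon u_\varepsilon=0$, so that in the Navier-Stokes equation the convective term becomes $\int Y_\varepsilon u_\varepsilon\otimes u_\varepsilon:\nabla\varphi$, while the pressure drops once $\varphi$ is solenoidal), and then control the resulting fluxes by a combination of H\"older's inequality in space-time and the uniform estimates already available: $\int_0^T\!\!\int_\Omega|\nabla n_\varepsilon|^p\leq C(T{+}1)$ from Corollary \ref{cor5.2}, $\int_0^T\!\!\int_\Omega|\nabla c_\varepsilon|^4\leq C(T{+}1)$ and $\int_0^T\!\!\int_\Omega|\nabla u_\varepsilon|^2\leq C(T{+}1)$ from Lemma \ref{lem3.3}, $\int_0^T\!\!\int_\Omega |u_\varepsilon|^{10/3}\leq C(T{+}1)$ from Lemma \ref{lem5.3}, and $\int_0^T\!\!\int_\Omega n_\varepsilon^6\leq C(T{+}1)$ from Corollary \ref{cor5.4}, together with the pointwise bounds $\|c_\varepsilon\|_\infty\leq s_0$, $\|F_\varepsilon'\|_\infty\leq 1$ and the mass conservation identity (\ref{eq-n-mass-conservation}).

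For (\ref{eq1-lem5.2}), testing the first equation of (\ref{model-approximate}) against $\varphi\in W^{1,p}(\Omega)$ produces three flux contributions. The diffusive piece is controlled by $\||\nabla n_\varepsilon|^{p-1}\|_{L^{p'}(\Omega)}\|\nabla\varphi\|_{L^p(\Omega)}$, and its $p'$-power integrated in time is just $\int_0^T\!\!\int_\Omega|\nabla n_\varepsilon|^p$. The chemotactic and convective pieces reduce to controlling $\int_0^T\!\!\int_\Omega n_\varepsilon^{p'}|\nabla c_\varepsilon|^{p'}$ and $\int_0^T\!\!\int_\Omega n_\varepsilon^{p'}|u_\varepsilon|^{p'}$, which I plan to bound by splitting $(|\nabla c_\varepsilon|^4, n_\varepsilon^a)$ and $(|u_\varepsilon|^{10/3}, n_\varepsilon^b)$ by H\"older; the admissibility constraints $\tfrac{4p'}{4-p'}\leq 6$ and $\tfrac{10p'}{10-3p'}\leq 6$ reduce to $p\geq 12/7$ and $p\geq 15/8$ respectively, both comfortably satisfied since $p>32/15$.

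For (\ref{eq2-lem5.2}), I test the $c_\varepsilon$-equation against $\varphi\in W^{1,10/7}(\Omega)$ and produce three contributions: $\int_0^T\|\nabla c_\varepsilon\|_{L^{10/3}}^{10/3}$, which is $\leq\int_0^T\!\!\int_\Omega|\nabla c_\varepsilon|^{10/3}$ and hence $\leq C(T{+}1)$ by H\"older (since $10/3<4$); $\int_0^T\|c_\varepsilon u_\varepsilon\|_{L^{10/3}}^{10/3}$, absorbed by $\|c_\varepsilon\|_\infty^{10/3}\int_0^T\!\!\int_\Omega|u_\varepsilon|^{10/3}$; and a reaction term for which the Sobolev embedding $W^{1,10/7}(\Omega)\hookrightarrow L^{30/11}(\Omega)$ in three dimensions reduces the bound to $\int_0^T\|F_\varepsilon(n_\varepsilon)f(c_\varepsilon)\|_{L^{30/19}}^{10/3}$, controlled via $F_\varepsilon(s)\leq s$, $f\in C^1$, and the $L^6$-in-space estimate on $n_\varepsilon$ combined with H\"older in time.
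For (\ref{eq3-lem5.2}), testing against $\varphi\in W^{1,5}_{0,\sigma}(\Omega)$ and raising to the $5/4$-power, the viscous term produces $\int_0^T\!\|\nabla u_\varepsilon\|_{L^{5/4}}^{5/4}$ which is dominated via H\"older in time by $\int_0^T\!\|\nabla u_\varepsilon\|_{L^2}^2$; the buoyancy contribution is handled using $W^{1,5}\hookrightarrow L^\infty$ in three dimensions and $\|n_\varepsilon\|_{L^1}=\|n_0\|_{L^1}$; and the convective term $\int_0^T\|Y_\varepsilon u_\varepsilon\otimes u_\varepsilon\|_{L^{5/4}}^{5/4}$ is bounded, via the contractivity of $Y_\varepsilon$ on $L^q_\sigma(\Omega)$ and Cauchy--Schwarz, by $\int_0^T\!\!\int_\Omega|u_\varepsilon|^{5/2}$, which in turn is dominated by $\int_0^T\!\!\int_\Omega|u_\varepsilon|^{10/3}$.

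The main obstacle is bookkeeping: one must verify that the triple of exponent constraints arising from the three flux terms in the $n_\varepsilon$-equation is simultaneously compatible with $p>32/15$ and with the sixth-power bound furnished by Corollary \ref{cor5.4}. Once this is done, each of the three estimates is a routine consequence of H\"older's inequality and the uniform bounds of Sections 3 and 5.
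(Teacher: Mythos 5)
Your proposal follows essentially the same duality strategy as the paper: test each equation of (\ref{model-approximate}) against smooth test functions, integrate by parts, and bound the resulting fluxes by H\"older and Young together with the space--time estimates
$\int_0^T\!\!\int_\Omega|\nabla n_\varepsilon|^p\leq C(T{+}1)$,
$\int_0^T\!\!\int_\Omega|\nabla c_\varepsilon|^4\leq C(T{+}1)$,
$\int_0^T\!\!\int_\Omega n_\varepsilon^6\leq C(T{+}1)$,
$\int_0^T\!\!\int_\Omega|u_\varepsilon|^{10/3}\leq C(T{+}1)$ and
$\int_0^T\!\!\int_\Omega|\nabla u_\varepsilon|^2\leq C(T{+}1)$.
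Your exponent conditions $p\geq\frac{12}{7}$ and $p\geq\frac{15}{8}$ are exactly the paper's conditions
$\frac16+\frac14<\frac1{p'}$ and $\frac16+\frac3{10}<\frac1{p'}$.
The one place you genuinely diverge is the treatment of the convective term in the Navier--Stokes equation. You reduce, via Cauchy--Schwarz and ``contractivity'' of $Y_\varepsilon$ on $L^{5/2}_\sigma(\Omega)$, to $\int_0^T\!\!\int_\Omega|u_\varepsilon|^{5/2}$. Be careful here: $Y_\varepsilon=(1+\varepsilon A)^{-1}$ is a contraction on $L^2_\sigma(\Omega)$ because the Stokes operator $A$ is self-adjoint and non-negative there, but on $L^q_\sigma(\Omega)$ with $q\neq 2$ one only has uniform boundedness of the resolvent (still enough, but a deeper fact, not a contraction). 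The paper sidesteps this entirely by applying Young's inequality with exponents $\frac85$ and $\frac83$ to write
$|Y_\varepsilon u_\varepsilon|^{5/4}|u_\varepsilon|^{5/4}\leq\frac58|Y_\varepsilon u_\varepsilon|^2+\frac38|u_\varepsilon|^{10/3}$,
so that only the elementary $L^2$-contraction $\|Y_\varepsilon v\|_{L^2}\leq\|v\|_{L^2}$ is ever invoked. The remaining differences are cosmetic: you use $W^{1,5}(\Omega)\hookrightarrow L^\infty(\Omega)$ plus mass conservation for the buoyancy term, and $W^{1,10/7}(\Omega)\hookrightarrow L^{30/11}(\Omega)$ for the reaction term, where the paper just estimates $n_\varepsilon$ in $L^{5/4}$ (resp.\ pairs with $\|\varphi\|_{L^{10/7}}$ directly); all of these are correct and interchangeable.
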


\begin{proof}
(i)For arbitrary $t>0$ and $\varphi\in C^\infty(\bar{\Omega})$, multiplying the first equation in (\ref{model-approximate}) by $\varphi$,
integrating by parts and using the H${\rm{\ddot{o}}}$lder's inequality we obtain
\begin{eqnarray}
&&|\int_\Omega{\partial_t}{n_\varepsilon}(\cdot,t)\varphi|\nonumber\\
&=&|-\int_\Omega(|\nabla{n_\varepsilon}|^2+\varepsilon)^\frac{p-2}2\nabla{n_\varepsilon}\cdot\nabla\varphi+\int_\Omega{n_\varepsilon}{F_\varepsilon}'({n_\varepsilon})\chi({c_\varepsilon})\nabla{c_\varepsilon}\cdot\nabla\varphi+\int_\Omega{n_\varepsilon}{u_\varepsilon}\cdot\nabla\varphi|\nonumber\\
&\leq&\int_\Omega(|\nabla{n_\varepsilon}|^2+\varepsilon)^\frac{p-2}2|\nabla{n_\varepsilon}|\cdot|\nabla\varphi|+C\|{n_\varepsilon}\nabla{c_\varepsilon}\|_{p'}\|\nabla\varphi\|_{p}+\|{n_\varepsilon}{u_\varepsilon}\|_{p'}\|\nabla\varphi\|_{p}\nonumber\\
&\leq&C_1(\|(|\nabla{n_\varepsilon}|^2+\varepsilon)^\frac{p-1}2\|_{p'}+\|{n_\varepsilon}\nabla{c_\varepsilon}\|_{p'}+\|{n_\varepsilon}{u_\varepsilon}\|_{p'})\|\nabla\varphi\|_{p}\nonumber
\end{eqnarray}
with some $C_1>0$.
By lemma \ref{lem3.3} and (\ref{eq-c-maxmum-principle-s0}) we have
\begin{eqnarray}
\int^T_0\int_\Omega|\nabla{c_\varepsilon}|^4=\int^T_0\int_\Omega\frac{|\nabla{c_\varepsilon}|^4}{{c_\varepsilon}^3}{c_\varepsilon}^3\leq C_2(T+1)\ \ \ \ {\rm{for\ all}}\ T>0\label{eq-lem5.2-zzh-c-4-norm}
\end{eqnarray}
with some $C_2>0$. Notice that $\frac16+\frac14<\frac16+\frac3{10}<\frac1{p'}$. Thus, invoking (\ref{eq-lem5.2-zzh-c-4-norm}) along with
(\ref{eq-cor5.0-nabla-n-p}), (\ref{eq-cor5.1}), Lemma \ref{lem3.3}, H${\rm{\ddot{o}}}$lder's inequality and Young's inequality
we obtain positive constants $C_3$, $C_4$, and $C_5$ such that
\begin{eqnarray}
&&\int^T_0\|{\partial_t}{n_\varepsilon}(\cdot,t)\|^{p'}_{(W^{1,p}(\Omega))^*}\nonumber\\
&\leq& C_3(\int^T_0\int_\Omega(|\nabla{n_\varepsilon}|^2+1)^{\frac{p-1}2{p'}}+\int^T_0\int_\Omega|{n_\varepsilon}\nabla{c_\varepsilon}|^{p'}+\int^T_0\int_\Omega|{n_\varepsilon}{u_\varepsilon}|^{p'})\nonumber\\
&\leq&C_4(\int^T_0\int_\Omega(|\nabla{n_\varepsilon}|^{p}+1)+\int^T_0\int_\Omega|{n_\varepsilon}|^{6}+\int^T_0\int_\Omega|\nabla{c_\varepsilon}|^{4}\nonumber\\
&\ &+\int^T_0\int_\Omega|{n_\varepsilon}|^{6}+\int^T_0\int_\Omega|{u_\varepsilon}|^{\frac{10}3}+|\Omega|T)\nonumber\\
&\leq&C_5(T+1)\ \ \ \ {\rm{for\ all}}\ T>0.\label{eq-lem5.2-estimate-i}
\end{eqnarray}

(ii) Likewise, given any $\varphi\in C^\infty(\bar{\Omega})$, multiplying the second equation in (\ref{model-approximate}) by $\varphi$ to see that
\begin{eqnarray}
|\int_\Omega{\partial_t}{c_\varepsilon}\varphi|&=&|-\int_\Omega\nabla{c_\varepsilon}\cdot\nabla\varphi-\int_\Omega {F_\varepsilon}({n_\varepsilon})f({c_\varepsilon})\varphi+\int_\Omega{u_\varepsilon}{c_\varepsilon}\cdot\nabla\varphi|\nonumber\\
&\leq&C_6(\|\nabla{c_\varepsilon}\|_{\frac{10}3}+\|{n_\varepsilon}f({c_\varepsilon})\|_{\frac{10}3}+\|{u_\varepsilon}{c_\varepsilon}\|_{\frac{10}3})\|\varphi\|_{W^{1,\frac{10}7}}\nonumber\\
&\leq&C_7(\|\nabla{c_\varepsilon}\|_{\frac{10}3}+\|{n_\varepsilon}\|_{\frac{10}3}+\|{u_\varepsilon}\|_{\frac{10}3})\|\varphi\|_{W^{1,\frac{10}7}},\nonumber
\end{eqnarray}
with some $C_6>0$ and $C_7>0$. A combination of (\ref{eq-lem5.1-u}), (\ref{eq-cor5.1}) and (\ref{eq-lem5.2-zzh-c-4-norm}) shows that there exist positive constants $C_8$ and $C_{9}$ such that
\begin{eqnarray}
\int^T_0\|{\partial_t}{c_\varepsilon}(\cdot,t)\|^{\frac{10}3}_{(W^{1,{\frac{10}7}}(\Omega))^*}&\leq&C_8(\int^T_0\int_\Omega|\nabla{c_\varepsilon}|^{4}+\int^T_0\int_\Omega{n_\varepsilon}^{6}+\int^T_0\int_\Omega|{u_\varepsilon}|^{\frac{10}3}+|\Omega|T)\nonumber\\
&\leq&C_{9}(T+1)\ \ \ \ {\rm{for\ all}}\ T>0.\label{eq-lem5.2-estimate-ii}
\end{eqnarray}
(iii) (see also \cite[Lemma 3.11]{Tao&Winkler-AIHP-2013}) Given $\varphi\in C^\infty_{0,\sigma}(\Omega;\mathbb{R}^3)$, we infer form the third equation in (\ref{model-approximate}) that there exist positive constants $C_{10}$ and $C_{11}$ such that
\begin{eqnarray}
&&\int\|\partial_t u_\varepsilon(\cdot,t)\|^\frac54_{(W^{1,5}_{0,\sigma}(\Omega))^*}dt\\
&\leq&C_{10}\int^T_0\int_\Omega|\nabla u_\varepsilon|^\frac54+C_{10}\int^T_0\int_\Omega
|Y_\varepsilon u_\varepsilon\otimes u_\varepsilon|^\frac54+C_{10}\int^T_0\int_\Omega |n_\varepsilon\nabla\Phi|^\frac54\\
&\leq&C_{11}\int^T_0\int_\Omega|\nabla u_\varepsilon|^2+C_{11}\int^T_0\int_\Omega
|Y_\varepsilon u_\varepsilon|^2+C_{11}\int^T_0\int_\Omega u_\varepsilon^\frac{10}3\\
&&+C_{11}\int^T_0\int_\Omega n_\varepsilon^6+C_{11}|\Omega|T\ \ \ \ {\rm{for\ all}}\ T>0,
\end{eqnarray}
where we use Young's inequality and $\nabla\Phi\in L^\infty(\Omega)$. Since $\|Y_\varepsilon v\|_{L^2(\Omega)}\leq \|v\|_{L^2(\Omega)}$ for all $v\in L^2_\sigma(\Omega)$ and hence $\int^T_0\int_\Omega|Y_\varepsilon u_\varepsilon|^2\leq \int^T_0\int_\Omega|u_\varepsilon|^\frac{10}3+|\Omega|T$ for all $T>0$, (\ref{eq3-lem5.2}) results from this upon another application of Lemma \ref{lem3.3}, Lemma \ref{lem5.3} and Corollary \ref{cor5.4}.
\end{proof}

%\section{Global weak solutions for (\ref{CNS}), (\ref{eq-ic}) and (\ref{eq-bc})}
\section{Passing to the limit. Proof of Theorem \ref{main result}}
In this section we construct global weak solutions for (\ref{CNS}), (\ref{eq-ic}) and (\ref{eq-bc}). Before going into detail, let us first give the definition of weak solution.
\begin{defn}\label{defnaaaa}
We call $(n, c, u)$ a {\it{global weak solution}} of (\ref{CNS}), (\ref{eq-ic}) and (\ref{eq-bc}) if
\begin{equation*}
n\in L_{loc}^{1}(\bar{\Omega}\times[0, \infty)),\quad c\in L_{loc}^1([0, \infty);W^{1,1}(\Omega)),\quad u\in\big{(}L_{loc}^1([0, \infty);W_0^{1,1}(\Omega))\big{)}^3
\end{equation*}
such that $n\geq0$ and $c\geq0$ {\it a.e.} in $\Omega\times(0, \infty)$, and that
\begin{eqnarray*}
&&nf(c)\in L_{loc}^1([0, \infty);L^{1}(\Omega)),\\
&&|\nabla n|^{p-2}\nabla n,\ n\chi(c)\nabla c,\ nu\ \mbox{and}\ cu\ \mbox{belong to}\left(L_{loc}^1([0, \infty);L^{1}(\Omega))\right)^3,\\
&&u\otimes u\in \left(L_{loc}^1([0, \infty);L^{1}(\Omega))\right)^{3\times3}
\end{eqnarray*}
and that
\begin{eqnarray}
&&\int^\infty_0\int_\Omega n_t\phi_1-\int^\infty_0\int_\Omega nu\cdot\nabla\phi_1=-\int^\infty_0\int_\Omega |\nabla n|^{p-2}\nabla n\cdot\nabla\phi_1+\int^\infty_0\int_\Omega n\chi(c)\nabla c\cdot\nabla\phi_1,\nonumber\\
&&\int^\infty_0\int_\Omega c_t\phi_2-\int^\infty_0\int_\Omega cu\cdot\nabla\phi_2=-\int^\infty_0\int_\Omega \nabla c\cdot\nabla\phi_2-\int^\infty_0\int_\Omega nf(c)\phi_2,\nonumber\\
&&\int^\infty_0\int_\Omega u_t\cdot\phi_3-\int^\infty_0\int_\Omega u\otimes u\cdot\nabla\phi_3=-\int^\infty_0\int_\Omega \nabla u\cdot\nabla\phi_3+\int^\infty_0\int_\Omega n\nabla\Phi\cdot\phi_3,\nonumber
\end{eqnarray}
hold for all $\phi_1,\phi_2\in C^\infty_0(\Omega\times[0,\infty))$ and $\phi_3\in(C^\infty_0(\Omega\times[0,\infty)))^3$ satisfying $\nabla\cdot\phi_3=0$.
\end{defn}

We need the following auxiliary lemma before proving Theorem \ref{main result}.
\begin{lem}\label{lem6.1} Let $\Omega$ be a bounded domain of $\mathbb{R}^d$ with $d\geq1$ and $f_k\rightharpoonup f$ in $L^p(\Omega)$ with $p\in(1,\infty)$, if there hold
\begin{equation}
f_k\rightarrow f\ \ a.e.\ \ {\rm{in}}\ \ \Omega,\ \ \ \ as\ \ k\rightarrow\infty,\label{eq-ae}
\end{equation}
then there exists a subsequence $k=\{k_j\}(j=1,2,3,\cdots)$ such that
\begin{equation}
f_{k_j}\rightarrow f\ \ {\rm{in}}\ \ L^q(\Omega),\ \ \ \ as\ \ k_j\rightarrow\infty
\end{equation}
for any $1\leq q< p$.
\end{lem}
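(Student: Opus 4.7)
The plan is to combine the \emph{uniform $L^p$ bound} coming from weak convergence with the almost everywhere convergence, and then invoke Vitali's convergence theorem (or equivalently, an Egorov-plus-H\"older truncation argument) to upgrade to strong convergence in $L^q$ for every $1\le q<p$. In fact the whole sequence converges; the subsequence language in the statement is only a convenient formulation for the way the lemma is applied in Section~6.

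First I would record the basic a priori facts. Since $f_k\rightharpoonup f$ in $L^p(\Omega)$, the Banach--Steinhaus theorem gives $M:=\sup_k\|f_k\|_{L^p(\Omega)}<\infty$, and the weak lower semicontinuity of the norm yields $\|f\|_{L^p(\Omega)}\le M$. Writing $g_k:=|f_k-f|^q$, the hypothesis \eqref{eq-ae} gives $g_k\to0$ almost everywhere in $\Omega$, and the triangle inequality gives the uniform $L^{p/q}$ bound $\|g_k\|_{L^{p/q}(\Omega)}=\|f_k-f\|_{L^p(\Omega)}^{q}\le(2M)^{q}$.

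The decisive step is equi-integrability of $(g_k)$. For any measurable $E\subset\Omega$, H\"older's inequality with exponents $p/q>1$ and its conjugate yields
\[
\int_E |f_k-f|^{q}\,dx \;\le\; |E|^{1-q/p}\,\|f_k-f\|_{L^p(\Omega)}^{q}\;\le\;(2M)^{q}|E|^{1-q/p},
\]
and since $q<p$ the right-hand side tends to $0$ as $|E|\to0$, uniformly in $k$. Because $|\Omega|<\infty$ and $g_k\to0$ a.e., Vitali's convergence theorem then gives $\int_\Omega|f_k-f|^{q}\,dx\to0$, i.e.\ $f_k\to f$ in $L^q(\Omega)$; in particular one may choose any subsequence $\{k_j\}$.

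There is no substantial obstacle: the content is essentially the statement that in $L^p$ with finite measure, a.e.\ convergence together with uniform boundedness in $L^p$ implies strong convergence in every lower $L^q$. The only point to verify carefully is the equi-integrability estimate above, which relies crucially on $q<p$; the case $q=p$ is of course false in general.
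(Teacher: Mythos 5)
Your proof is correct, but it takes a genuinely different route from the paper. You argue via \emph{uniform integrability}: H\"older's inequality with exponents $p/q$ and $(p/q)'$ gives $\int_E|f_k-f|^q\le(2M)^q|E|^{1-q/p}$, which is equi-small as $|E|\to0$ because $q<p$; combined with a.e.\ convergence on the finite-measure set $\Omega$, Vitali's convergence theorem yields $f_k\to f$ in $L^q$ for the \emph{whole} sequence. The paper instead proceeds by \emph{weak compactness plus a Brezis--Lieb-type Fatou argument}: it observes that $|f_k|^q$ is bounded in $L^{p/q}(\Omega)$, extracts a subsequence so that $|f_{k_j}|^q\rightharpoonup|f|^q$ (the a.e.\ convergence identifies the weak limit), tests against the constant function to obtain $\|f_{k_j}\|_q\to\|f\|_q$, and finally applies Fatou's lemma to $2^{q-1}(|f_{k_j}|^q+|f|^q)-|f_{k_j}-f|^q\ge0$ to force $\limsup_j\int_\Omega|f_{k_j}-f|^q\le0$. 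Your approach is shorter and obtains convergence without passing to a subsequence; the paper's approach avoids invoking Vitali's theorem and is a self-contained Fatou computation, at the cost of a subsequence extraction which is genuinely needed there (to get the weak limit of $|f_k|^q$). Both are correct; the choice is a matter of which black box one prefers.
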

\begin{proof} For any $1\leq q< p$, we have $|f_k|^q$ uniformly bounded in $L^{\frac pq}(\Omega)$, hence there exists a subsequence $(k_j)_{j\in \mathbb{N}}$ such that $|f_{k_j}|^q\rightharpoonup |f|^q$ in $L^{\frac pq}(\Omega)$. This proves that
$$\lim_{{k_j}\rightarrow\infty}\|f_{k_j}\|_q^q=\lim_{{k_j}\rightarrow\infty}\int_\Omega |f_{k_j}|^q=\lim_{{k_j}\rightarrow\infty}\int_\Omega\mathbf{ 1}_{\Omega}|f_{k_j}|^q=\int_\Omega |f|^q=\|f\|_q^q,$$
which together with (\ref{eq-ae}) is enough to complete the proof. Indeed, applying the inequality $|a-b|^q\leq2^{q-1}(|a|^q+|b|^q)$ we obtain
$$2^{q-1}(|f_{k_j}(x)|^q+|f(x)|^q)-|f_{k_j}(x)-f(x)|^q\geq0,\ \ \ \ \forall x\in\Omega.$$
In view of Fatou's lemma, we obtain
\begin{eqnarray*}
2^q\int_\Omega|f|^q&\leq&\liminf_{{k_j}\rightarrow\infty}\int_\Omega\left[2^{q-1}(|f_{k_j}|^q+|f|^q)-|f_{k_j}-f|^q\right]\\
&\leq&2^{q-1}\lim_{{k_j}\rightarrow\infty}\int_\Omega|f_{k_j}|^q+2^{q-1}\int_\Omega|f|^q+\liminf_{{k_j}\rightarrow\infty}\int_\Omega(-|f_{k_j}-f|^q)\\
&=&2^q\int_\Omega|f|^q-\limsup_{{k_j}\rightarrow\infty}\int_\Omega|f_{k_j}-f|^q,
\end{eqnarray*}
which means
$$\limsup_{{k_j}\rightarrow\infty}\int_\Omega|f_{k_j}-f|^q\leq0,$$
and this get the desired result.
\end{proof}
We can now prove our main result.

\noindent{\bf Proof of Theorem \ref{main result}.}
By Lemma \ref{lem3.3}, Corollary \ref{cor5.4}, Lemma \ref{lem5.3} and Lemma \ref{lem5.5}, for some $C>0$ independent of $\varepsilon$, there hold
\begin{eqnarray}
&&\left\|n_{\varepsilon}\right\|_{L_{loc}^{p}([0, \infty); W^{1,p}(\Omega))}\leq C(T+1),\label{slx-space-n}\\
&&\left\|\left(n_{\varepsilon}\right)_t\right\|_{L_{loc}^{p'}([0, \infty); (W^{1,p}(\Omega))^{*})}\leq C(T+1),\label{slx-space-nt}\\
&&\left\|c_{\varepsilon}\right\|_{L^{2}_{loc}([0, \infty); W^{2,2}(\Omega))}\leq C(T+1),\\
&&\left\|(c_{\varepsilon})_t\right\|_{L^{\frac{10}3}_{loc}([0, \infty); (W^{1,\frac{10}7}(\Omega))^{*})}\leq C(T+1),\\
&&\left\|u_{\varepsilon}\right\|_{L_{loc}^{2}([0, \infty); W^{1,2}(\Omega))}\leq C(T+1),\quad \mbox{and}\\
&&\left\|(u_{\varepsilon) t}\right\|_{L_{loc}^{\frac{5}{4}}([0, \infty); (W_{0,\sigma}^{1,5}(\Omega))^{*})}\leq C(T+1)
\end{eqnarray}
for all $T>0$. Therefore, the Aubin-Lions lemma \cite{J.L.Lions-DGVP-1969} asserts that
\begin{eqnarray*}
&&(n_{\varepsilon})_{\varepsilon\in(0,1)}\ \mbox{is strongly precompact in}\ L_{loc}^{p}(\bar{\Omega}\times[0, \infty)),\\
&&(c_{\varepsilon})_{\varepsilon\in(0,1)}\ \mbox{is strongly precompact in}\ L_{loc}^{2}([0, \infty); W^{1,2}(\Omega))\quad \mbox{and}\\
&&(u_{\varepsilon})_{\varepsilon\in(0,1)}\ \mbox{is strongly precompact in}\ L_{loc}^{2}(\bar{\Omega}\times[0, \infty)).
\end{eqnarray*}
This together with (\ref{eq-c-maxmum-principle-s0}), Lemma \ref{lem3.3}, Corollary \ref{cor5.2} and Corollary \ref{cor5.4}, yields a subsequence $\varepsilon:=\varepsilon_j\in(0,1)$ $(j=1,2,3,\cdots)$ and functions $n$, $c$ and $u$ such that
\begin{eqnarray}
n_{\varepsilon}\rightarrow n              &\quad& \mbox{in}\ L_{loc}^{p}(\bar{\Omega}\times[0, \infty)),\ \mbox{and a.e. in}\ \Omega\times(0, \infty),\label{n-strong-p}\\
n_{\varepsilon}\rightharpoonup n          &\quad& \mbox{in}\ L_{loc}^{r}(\bar{\Omega}\times[0, \infty)),\label{n-weak-r}\\
\nabla n_{\varepsilon}\rightharpoonup \nabla n          &\quad& \mbox{in}\ L_{loc}^{p}(\bar{\Omega}\times[0, \infty)),\label{nabla-n-weak-p}\\
|\nabla {n_\varepsilon}|^{p-2}\nabla n_{\varepsilon}\rightharpoonup \Gamma_1%|\nabla n|^{p-2}\nabla n
                                          &\quad& \mbox{in}\ L_{loc}^{p'}(\bar{\Omega}\times[0, \infty))\label{mix-nabla-n}
\end{eqnarray}
with $\Gamma_1=|\nabla n|^{p-2}\nabla n$ which will be showed in Lemma \ref{lem6.2} and $r$ is given by Lemma \ref{lem5.3}, and
\begin{eqnarray}
c_{\varepsilon}\rightarrow c &\quad& \mbox{in}\ L_{loc}^{2}([0, \infty); W^{1,2}(\Omega))\ \mbox{and a.e. in}\ \Omega\times(0, \infty),\label{c-strong-w-one-two}\\
c_{\varepsilon}\stackrel{\ast}{\rightharpoonup} c
                                          &\quad& \mbox{in}\ L^{\infty}(\Omega\times(0, \infty)),\label{c-weak-star-infty}\\
\nabla c_{\varepsilon}\rightharpoonup \nabla c
                                          &\quad& \mbox{in}\ L_{loc}^{4}(\bar{\Omega}\times[0, \infty))\label{nabla-c-weak-four}
\end{eqnarray}
as well as
\begin{eqnarray}
u_{\varepsilon}\rightarrow u              &\quad& \mbox{in}\ L_{loc}^{2}(\bar{\Omega}\times[0, \infty))\ \mbox{and a.e. in}\ \Omega\times(0, \infty),\label{c-strong-two}\\
u_{\varepsilon}\stackrel{\ast}{\rightharpoonup} u
                                          &\quad& \mbox{in}\ L^{\infty}([0, \infty); L^2_{\sigma}(\Omega)),\label{u-weak-star-infty}\\
u_{\varepsilon}\rightharpoonup u          &\quad& \mbox{in}\ L_{loc}^{\frac{10}{3}}(\bar{\Omega}\times[0, \infty)),\label{u-weak-frac-ten-three}\\
\nabla u_{\varepsilon}\rightharpoonup \nabla u
                                          &\quad& \mbox{in}\ L_{loc}^{2}(\bar{\Omega}\times[0, \infty))\label{nabla-u-weak-two}
\end{eqnarray}
as $\varepsilon\searrow0$.

According to Lemma \ref{lem5.3} and (\ref{n-strong-p}), an application of lemma \ref{lem6.1} provides a sequence $(\varepsilon_j)_{j\in\mathbb{N}}\subset(0,1)$ and the limit function $n$ such that $\varepsilon_j\rightarrow0$ as $j\rightarrow\infty$ and such that
\begin{equation}
n_{\varepsilon}\rightarrow n              \quad \mbox{in}\ L_{loc}^{5}(\bar{\Omega}\times[0, \infty)),\ \mbox{and a.e. in}\ \Omega\times(0, \infty).\label{n-strong-7}
\end{equation}
Similarly, according to (\ref{eq-chi-f-phi}), (\ref{eq-F-eq1})-(\ref{eq-F-eq3}),(\ref{eq-c-maxmum-principle-s0}), (\ref{slx-space-n})-(\ref{nabla-u-weak-two}) and Lemma \ref{lem6.1} % and taking a suitable subsequence $(\varepsilon_j)_{j\in \mathbb{N}}$,
we can obtain
\begin{eqnarray}
F'_{\varepsilon}(n_{\varepsilon})\chi(c_{\varepsilon})\nabla c_{\varepsilon}\rightarrow \chi(c)\nabla c
                                                         &\quad& \mbox{in}\ L_{loc}^{\frac{10}3}(\bar{\Omega}\times[0, \infty)),\label{zuheone}\\
f(c_{\varepsilon})\rightarrow f(c)                       &\quad& \mbox{in}\ L_{loc}^{\frac{10}{3}}(\bar{\Omega}\times[0, \infty)),\\
c_{\varepsilon}\rightarrow c                             &\quad& \mbox{in}\ L_{loc}^{2}(\bar{\Omega}\times[0, \infty)),\\
F_{\varepsilon}(n_{\varepsilon})\rightarrow n            &\quad& \mbox{in}\ L_{loc}^{\frac{10}{7}}(\bar{\Omega}\times[0, \infty)),\\
Y_{\varepsilon}u_{\varepsilon}\rightarrow u              &\quad& \mbox{in}\ L_{loc}^{2}(\bar{\Omega}\times[0, \infty))
\end{eqnarray}
as $\varepsilon\searrow0$. And moreover we have
\begin{eqnarray}
n_{\varepsilon}u_{\varepsilon}\rightarrow nu         &\quad& \mbox{in}\ L_{loc}^{1}(\bar{\Omega}\times[0, \infty)),\\
n_{\varepsilon}F'_{\varepsilon}(n_{\varepsilon})\chi(c_{\varepsilon})\nabla c_{\varepsilon}\rightarrow n\chi(c)\nabla c
                                                         &\quad& \mbox{in}\ L_{loc}^{2}(\bar{\Omega}\times[0, \infty)),\label{zuhetwo}\\
F_{\varepsilon}(n_{\varepsilon})f(c_{\varepsilon})\rightarrow nf(c)
                                                         &\quad& \mbox{in}\ L_{loc}^{1}(\bar{\Omega}\times[0, \infty)),\\
c_{\varepsilon}u_{\varepsilon}\rightarrow cu             &\quad& \mbox{in}\ L_{loc}^{1}(\bar{\Omega}\times[0, \infty)),\\
Y_{\varepsilon}u_{\varepsilon}\otimes u_{\varepsilon}\rightarrow u\otimes u
                                                         &\quad& \mbox{in}\ L_{loc}^{1}(\bar{\Omega}\times[0, \infty))
\end{eqnarray}
as $\varepsilon\searrow0$.
Based on the above convergence properties, we can pass to the limit in each term of weak formulation for (\ref{model-approximate}) to construct a global weak solution of (\ref{CNS})-(\ref{eq-bc}) and thereby completes the proof.

Now it remains to show that (\ref{mix-nabla-n}) holds with $\Gamma_1=|\nabla n|^{p-2}\nabla n$. Monotonic method (cf. \cite[Section 2.1]{J.L.Lions-DGVP-1969}, see also \cite{Bendahmane2009}) is used to prove the convergence. Actually, we have
\begin{lem}\label{lem6.2} Under the assumptions of Theorem \ref{main result}, we have
$$|\nabla {n_\varepsilon}|^{p-2}\nabla n_{\varepsilon}\rightharpoonup |\nabla n|^{p-2}\nabla n\quad \mbox{in}\ L_{loc}^{p'}(\bar{\Omega}\times[0, \infty)).$$
\end{lem}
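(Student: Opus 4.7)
I would use the classical Minty monotonicity trick. Since $p\geq 2$, the regularized operator $\xi\mapsto (|\xi|^2+\varepsilon)^{(p-2)/2}\xi$ is monotone, i.e.\
$$\bigl((|a|^2+\varepsilon)^{(p-2)/2}a-(|b|^2+\varepsilon)^{(p-2)/2}b\bigr)\cdot(a-b)\geq 0 \qquad \forall\, a,b\in\mathbb{R}^3.$$
Fix any nonnegative cutoff $\psi\in C^\infty_0([0,T))$ and any smooth $v$. Integrating the above with $a=\nabla n_\varepsilon$, $b=\nabla v$ against $\psi$ and expanding yields four terms. Three of them pass to the limit directly: the mixed terms via the weak convergence \eqref{mix-nabla-n} of $(|\nabla n_\varepsilon|^2+\varepsilon)^{(p-2)/2}\nabla n_\varepsilon\rightharpoonup\Gamma_1$ in $L^{p'}$, the weak convergence $\nabla n_\varepsilon\rightharpoonup\nabla n$ in $L^p$, and the strong (a.e.\ with uniform $L^\infty$) convergence of $(|\nabla v|^2+\varepsilon)^{(p-2)/2}\nabla v\to|\nabla v|^{p-2}\nabla v$. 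The only non-trivial term is the ``diagonal'' one $A_\varepsilon:=\int_0^T\!\!\int_\Omega(|\nabla n_\varepsilon|^2+\varepsilon)^{(p-2)/2}|\nabla n_\varepsilon|^2\psi$, for which I must prove the upper bound $\limsup_{\varepsilon\to 0}A_\varepsilon\leq\int_0^T\!\!\int_\Omega\Gamma_1\cdot\nabla n\,\psi$.

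\textbf{The key upper bound.} I would test the first equation of \eqref{model-approximate} by $n_\varepsilon\psi$, using $\nabla\cdot u_\varepsilon=0$ to kill the convective contribution, to obtain
\begin{align*}
A_\varepsilon = \tfrac12\!\int_\Omega n_{0\varepsilon}^2\psi(0) - \tfrac12\!\int_\Omega n_\varepsilon^2(T)\psi(T) + \tfrac12\!\int_0^T\!\!\int_\Omega n_\varepsilon^2\psi' + \int_0^T\!\!\int_\Omega n_\varepsilon F_\varepsilon'(n_\varepsilon)\chi(c_\varepsilon)\nabla c_\varepsilon\cdot\nabla n_\varepsilon\,\psi.
\end{align*}
Then I would pass to the limit on the right-hand side using \eqref{approximate-initialdata-n}, the $L^5$-strong convergence \eqref{n-strong-7}, the convergences \eqref{zuheone}--\eqref{zuhetwo}, and weak lower semicontinuity at $t=T$ (taking $T$ in a set of full measure where $n_\varepsilon(\cdot,T)\rightharpoonup n(\cdot,T)$ in $L^2$, so that $-\tfrac12\!\int n_\varepsilon^2(T)\psi(T)$ has the correct $\limsup$ direction). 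This gives
$$\limsup_{\varepsilon\to 0} A_\varepsilon \leq \tfrac12\|n_0\|_{L^2}^2\psi(0) - \tfrac12\|n(T)\|_{L^2}^2\psi(T) + \tfrac12\!\int_0^T\!\!\int_\Omega n^2\psi' + \int_0^T\!\!\int_\Omega n\chi(c)\nabla c\cdot\nabla n\,\psi.$$
The limiting weak equation obtained from \eqref{model-approximate} by passing to the limit (with $\Gamma_1$ in place of $|\nabla n|^{p-2}\nabla n$) can in turn be tested by $n\psi$ (justified by a density argument using $n\in L^p(0,T;W^{1,p})$, $n_t\in L^{p'}(0,T;(W^{1,p})^*)$, and $n\in L^\infty_{loc}([0,\infty);L^2)$), and the resulting identity shows that the right-hand side above equals exactly $\int_0^T\!\!\int_\Omega\Gamma_1\cdot\nabla n\,\psi$.

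\textbf{Conclusion via Minty.} Combining the monotonicity inequality with the $\limsup$ estimate yields
$$0 \leq \int_0^T\!\!\int_\Omega (\Gamma_1 - |\nabla v|^{p-2}\nabla v)\cdot(\nabla n - \nabla v)\,\psi$$
for every smooth $v$, and by density for every $v\in L^p(0,T;W^{1,p}(\Omega))$. Choosing $v = n-\lambda w$ for $w\in L^p(0,T;W^{1,p}(\Omega))$ and $\lambda>0$, dividing by $\lambda$ and letting $\lambda\searrow 0$ via the continuity of $\xi\mapsto|\xi|^{p-2}\xi$, I obtain $\int_0^T\!\!\int_\Omega(\Gamma_1-|\nabla n|^{p-2}\nabla n)\cdot\nabla w\,\psi\geq 0$; applying this to both $\pm w$ forces $\Gamma_1=|\nabla n|^{p-2}\nabla n$.

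\textbf{Main obstacle.} The essential difficulty lies in the upper bound for $A_\varepsilon$. It requires sufficient regularity to legitimately use $n$ as a test function in the limiting equation, and to handle the boundary term $-\tfrac12\!\int n_\varepsilon^2(T)\psi(T)$ with the correct direction of inequality. This is precisely why the authors need the strengthened hypothesis $n_0\in L^2(\Omega)$ (rather than the $L\log L$ assumption of \cite{Winkler-AIHP-2016}) together with the bootstrap estimates \eqref{slx-space-n}--\eqref{slx-space-nt} and $n\in L^\infty_{loc}([0,\infty);L^2(\Omega))$ established through Lemma \ref{lem5.1} and Corollary \ref{cor5.2}, which in turn account for the quantitative threshold $p>\tfrac{32}{15}$.
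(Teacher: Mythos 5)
Your proposal is correct and follows essentially the same route as the paper: the Minty monotonicity trick applied to the regularized operator $\xi\mapsto(|\xi|^2+\varepsilon)^{(p-2)/2}\xi$, with the key step being an upper bound on the diagonal term $A_\varepsilon$ obtained by testing the approximate equation against $n_\varepsilon$, passing to the limit using the strong $L^5$ convergence of $n_\varepsilon$ and the convergences \eqref{zuheone}--\eqref{zuhetwo}, and then identifying the limit by testing the limit equation against $n$ (justified by $n\in L^p_{loc}W^{1,p}$, $n_t\in L^{p'}_{loc}(W^{1,p})^*$). The only difference is cosmetic: you localize in time with a nonnegative cutoff $\psi\in C^\infty_0([0,T))$, whereas the paper instead integrates the identity an extra time over $\Sigma_T=\{(t,s,x):s\in(0,t),\,t\in(0,T)\}$, which replaces the pointwise-in-time boundary value $\|n_\varepsilon(\cdot,T)\|_{L^2}^2$ by the time average $\int_0^T\|n_\varepsilon(\cdot,t)\|_{L^2}^2\,dt$, convergent by the strong $L^5$ compactness; both are standard devices serving the same purpose. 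One minor redundancy in your writeup: since $\psi\in C^\infty_0([0,T))$ forces $\psi(T)=0$, the discussion of weak lower semicontinuity of $\|n_\varepsilon(\cdot,T)\|_{L^2}$ for the $t=T$ boundary term is unnecessary — that term vanishes identically, which is precisely the point of the cutoff.
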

\begin{proof}We define $\Sigma_T:=\{(t,s,x):(x,s)\in \Omega\times(0,t), t\in[0,T]\}$. It's equivalent to show that
\begin{equation}
\int_{\Sigma_T}(\Gamma_1-|\nabla \sigma|^{p-2}\nabla \sigma)\cdot(\nabla n-\nabla \sigma)dxdsdt\geq 0,\ \ \forall \sigma\in L^{p}(0,T;W^{1,p}(\Omega)).\label{eq-lem6.2}
\end{equation}
For all fixed $\varepsilon>0$, we have the decomposition
$$\int_{\Sigma_T}(|\nabla {n_\varepsilon}|^{p-2}\nabla n_{\varepsilon}-|\nabla \sigma|^{p-2}\nabla \sigma)\cdot(\nabla n-\nabla \sigma)dxdsdt=I_1+I_2+I_3,$$
with
\begin{eqnarray*}
I_1&=&\int_{\Sigma_T}|\nabla {n_\varepsilon}|^{p-2}\nabla n_{\varepsilon}\cdot(\nabla n-\nabla n_\varepsilon)dxdsdt,\\
I_2&=&\int_{\Sigma_T}(|\nabla {n_\varepsilon}|^{p-2}\nabla n_{\varepsilon}-|\nabla \sigma|^{p-2}\nabla \sigma)\cdot(\nabla n_\varepsilon-\nabla \sigma)dxdsdt,\\
I_3&=&\int_{\Sigma_T}|\nabla \sigma|^{p-2}\nabla \sigma\cdot(\nabla n_\varepsilon-\nabla n)dxdsdt.
\end{eqnarray*}
Clearly, $I_2\geq C|\nabla n_\varepsilon-\nabla \sigma|^p\geq0$, where $C$ is a positive constant only depending on $p$, and from (\ref{nabla-n-weak-p}) we deduce that $I_3\rightarrow0$ as $\varepsilon\searrow0$.

For $I_1$, if we multiply the first equation of (\ref{model-approximate}) by $(n-n_\varepsilon)$ and integrate over $\Sigma_T$, we obtain
\begin{eqnarray*}
&&\int_{\Sigma_T}(|\nabla n_\varepsilon|^2+\varepsilon)^\frac{p-2}2\nabla n_{\varepsilon}\cdot(\nabla n-\nabla n_\varepsilon)\\
&=&-\int^T_0\int^t_0<\partial_t n_\varepsilon,n>dsdt+\int^T_0\int^t_0<\partial_t n_\varepsilon,n_\varepsilon>dsdt\\
&&-\int_{\Sigma_T}{u_\varepsilon}\cdot\nabla {n_\varepsilon}(n-{n_\varepsilon})dxdsdt+\int_{\Sigma_T}{n_\varepsilon}{F_\varepsilon}'({n_\varepsilon})\chi({c_\varepsilon})\nabla{c_\varepsilon}\cdot(\nabla n-\nabla{n_\varepsilon})dxdsdt\\
&=&-\int^T_0\int^t_0<\partial_t n_\varepsilon,n>dsdt+\frac12\int^T_0\int_\Omega{n_\varepsilon}^2dxdt-\frac{T}2\int_\Omega n_{0\varepsilon}^2dx\\
&&-\int_{\Sigma_T}{u_\varepsilon}\cdot\nabla {n_\varepsilon}(n-{n_\varepsilon})dxdsdt+\int_{\Sigma_T}{n_\varepsilon}{F_\varepsilon}'({n_\varepsilon})\chi({c_\varepsilon})\nabla{c_\varepsilon}\cdot(\nabla n-\nabla{n_\varepsilon})dxdsdt\\
&=:&J_1+J_2+J_3+J_4+J_5.
\end{eqnarray*}
From (\ref{eq-cor5.0-nabla-n-p}), (\ref{eq-lem5.1-u}), (\ref{n-strong-p}) we know that  $J_4\rightarrow0$ as $\varepsilon\searrow0$ and from (\ref{nabla-n-weak-p}), (\ref{zuhetwo}) we obtain $J_5\rightarrow0$ as $\varepsilon\searrow0$ since $\frac15+\frac3{10}+\frac1p\leq1$.
Therefore, using (\ref{approximate-initialdata-n}), (\ref{eq-lem5.1-n}), (\ref{slx-space-n}), (\ref{slx-space-nt}), (\ref{n-strong-p}), (\ref{nabla-n-weak-p}) and the Lebesgue dominated convergence theorem, we obtain
\begin{eqnarray*}
&&\lim_{\varepsilon\searrow0}\int_{\Sigma_T}(|\nabla n_\varepsilon|^2+\varepsilon)^\frac{p-2}2\nabla n_{\varepsilon}\cdot(\nabla n-\nabla n_\varepsilon)dxdsdt\\
&=&\lim_{\varepsilon\searrow0}(J_1+J_2+J_3)\\
&=&-\int^T_0\int^t_0<\partial_t n,n>dsdt+\frac12\int^T_0\int_\Omega n^2dxdt-\frac T2\int_\Omega n_0^2dx=0.
\end{eqnarray*}
Hence we have
$$\lim_{\varepsilon\searrow0}\int_{\Sigma_T}(|\nabla n_\varepsilon|^2+\varepsilon)^\frac{p-2}2\nabla n_{\varepsilon}\cdot(\nabla n-\nabla n_\varepsilon)dxdsdt=0,$$
which is equivalent to $\lim_{\varepsilon\searrow0}I_1=0$. Consequently, we have shown that
$$\lim_{\varepsilon\searrow0}\int_{\Sigma_T}(|\nabla {n_\varepsilon}|^{p-2}\nabla n_{\varepsilon}-|\nabla \sigma|^{p-2}\nabla \sigma)\cdot(\nabla n-\nabla \sigma)dxdsdt\geq0,$$
which proves (\ref{eq-lem6.2}). Choosing $\sigma=n-\lambda\xi$ with $\lambda\in \mathbb{R}$ and $\xi\in L^{p}(0,T;W^{1,p}(\Omega))$ and combining the two inequalities arising from $\lambda>0$ and $\lambda<0$, we obtain the assertion of the lemma.
\end{proof}

%\bibliography{taoweirun-liyuxiangA1-bibfile}
%\bibliographystyle{siam}

\end{document}